\def\bP{\mathbf{P}}
\def\bF{\mathbf{F}}
\def\bR{{\mathbb R}}
\def\sE{{\mathscr E}}
\def\sF{{\mathscr F}}
\def\sA{{\mathscr A}}
\def\sG{{\mathscr G}}
\def\cS{\mathcal{S}}
\def\cF{\sF}
\def\cN{\mathcal{N}}
\def\sL{\mathscr{L}}
\def\cD{\mathcal{D}}
\def\bN{\mathbb{N}}
\def\limsup{\mathop{\overline{\mathrm{lim}}}}
\def\${|\!|\!|}
\def\l|{\left|\!\left|\!\left|}
\def\r|{\right|\!\right|\!\right|}
\newtheorem{theorem}{Theorem}[section]
\newtheorem{lemma}[theorem]{Lemma}
\newtheorem{proposition}[theorem]{Proposition}
\newtheorem{corollary}[theorem]{Corollary}
\theoremstyle{definition}
\newtheorem{definition}[theorem]{Definition}
\theoremstyle{remark}
\newtheorem{remark}[theorem]{Remark}
\numberwithin{equation}{section}
\begin{document}

\title[Ray-Knight compactification]{Ray-Knight compactification of birth and death processes}

\author{Liping Li}
\address{Fudan University, Shanghai, China.  }
\address{Bielefeld University,  Bielefeld, Germany.}
\email{liliping@fudan.edu.cn}
\thanks{The author is a member of LMNS, Fudan University.  He is also partially supported by NSFC (No. 11931004 and 12371144) and Alexander von Humboldt Foundation in Germany. }


\subjclass[2020]{Primary 60J27,  60J40,  60J46,  60J50, 60J74.}

\date{\today}


\keywords{Birth and death processes,  continuous-time Markov chains, Ray-Knight compactification,  Ray processes,  Doob processes,  Feller processes,  Dirichlet forms,  Boundary conditions}

\begin{abstract}
A birth and death process is a continuous-time Markov chain with minimal state space $\bN$,  whose transition matrix is standard and whose density matrix is {a} birth-death matrix.  {Birth and death process is} unique if and only if $\infty$ is {an} entrance or natural.  When $\infty$ is neither {an} entrance nor natural,  {there are two ways in the literature} to obtain all birth and death processes.  The first one is an analytic treatment {proposed } by Feller in 1959,  and the second {one} is a probabilistic construction completed by Wang in 1958.  

In this paper we will {give another way to study birth and death processes using the Ray-Knight compactification}.  This way has {the advantage of both the analytic and probabilistic treatments above}.  {By} applying {the} Ray-Knight compactification,  every birth and death process {can be modified into a c\`adl\`ag Ray process} on {$\bN\cup \{\infty\}\cup\{\partial\}$,  which is either a Doob processes or a Feller $Q$-process}.  Every birth and death process in the second class has a modification that is a Feller process on {$\bN\cup\{\infty\}\cup \{\partial\}$}.  We will derive the expression of its infinitesimal generator,  which {explains its boundary behaviours at $\infty$}.   Furthermore,  {by} {using the killing transform and the Ikeda-Nagasawa-Watanabe piecing out procedure}, we will also {provide} a probabilistic construction for birth and death processes.  {This construction relies on a triple determining the resolvent matrix introduced by Wang and Yang in their work \cite{WY92}.}
\end{abstract}

\maketitle

\tableofcontents

\section{Introduction}\label{SEC1}

Throughout this paper, we consider a birth-death density matrix as follows:
\begin{equation}\label{eq:11}
Q=(q_{ij})_{i,j\in \bN}:=\left(\begin{array}{ccccc}
-q_0 &  b_0 & 0 & 0 &\cdots \\
a_1 & -q_1 & b_1 & 0 & \cdots \\
0 & a_2 & -q_2 & b_2 & \cdots \\
\cdots & \cdots &\cdots &\cdots &\cdots 
\end{array}  \right),
\end{equation}
{where $a_k>0$ for $k \geq 1$ and $b_k>0,  q_k=a_k+b_k$ for $k\geq 0$.} (Set $a_0=0$ for convenience.)  As is well known,  a transition matrix $(p_{ij}(t))_{i,j\in \bN}$ is called honest if $\sum_{j\in \bN}p_{ij}(t)=1$ for all $i\in \bN$ and $t\geq 0$.  A non-honest transition matrix can always be treated as an honest one on $\bN\cup \{\partial\}$ by a standard argument,  where $\partial$ is an attached trap point; see,  e.g.,  \cite[II \S3, Theorem~3]{C67}.  The transition matrix is called \emph{standard} if $\lim_{t\rightarrow 0}p_{ij}(t)=\delta_{ij}$ for any $i,j\in\bN$.  A continuous-time Markov chain is called a \emph{birth-death $Q$-process} ($Q$-process for short) if its transition matrix $(p_{ij}(t))_{i,j\in \bN}$ is standard and its \emph{density matrix} is $Q$,  i.e.  $p'_{ij}(0)=q_{ij}$ for $i,j\in \bN$.  This amounts to saying that $\mathbf{P}(t):=(p_{ij}(t))_{i,j\in \bN}$ is standard and satisfies the Kolmogorov backward equation:
\begin{equation}\label{eq:02}
	\bP'(t)=Q\bP(t).
\end{equation}
Two $Q$-processes with the same transition matrix will not be distinguished in our consideration.  For convenience we would also call such $(p_{ij}(t))_{i,j\in \bN}$ a $Q$-process {when there is no danger of confusion}.  

There always exists at least one $Q$-process, called the \emph{minimal $Q$-process}; see,  e.g.,  \cite{F40},  \cite[\S10]{F59}, and also Section \ref{SEC21}.  The minimal $Q$-process is killed at the first \emph{flying time},  which is the first time  it almost reaches $\infty$ (see Corollary~\ref{COR35}).  A well-known uniqueness criterion states that the {$Q$-process is unique  if} and only if the {first flying time} of  the minimal $Q$-process is equal to infinity (namely,  the minimal transition matrix is honest on $\bN$).  In Feller's {terminology on the classification of} the right boundary point $\infty$ in terms of the minimal $Q$-process (see Definition~\ref{DEF23}),  this uniqueness is also equivalent to $\infty$ being \emph{{an entrance}} or \emph{natural} (see Theorem~\ref{THM25}). When $\infty$ is neither {an entrance} nor natural (i.e.,  either \emph{regular} or \emph{an exit}),  there exist other $Q$-processes besides the minimal one.  The most famous examples are {the so-called} \emph{Doob processes} (see,  e.g.,  \cite[\S15]{F59} and \cite[\S6.1]{WY92}).  Roughly speaking,  a Doob process returns to $\bN\cup \{\partial\}$ immediately at each flying time and is uniquely determined by its returning distribution,  which is a probability measure on $\bN\cup \{\partial\}$.  In particular,  it reduces to the minimal $Q$-process if the entering distribution is the point measure at $\partial$.  

It is certainly interesting to {find} all $Q$-processes for the non-trivial case that $\infty$ is regular or an exit.  To our knowledge, {there are two different ways in the literature.} The first way is an analytic treatment that involves constructing the corresponding resolvents.  {This approach} was introduced by Feller \cite{F59},  {who {found} $Q$-processes that satisfy not only \eqref{eq:02} but also the Kolmogorov forward equation}.  Yang later expanded on this work in 1965 to encompass all $Q$-processes (see \cite{WY92}).  {The main result in this approach states} that the resolvent of every $Q$-process is determined by a triple $(\nu, \gamma,\beta)$,  where $\nu$ is a positive Radon measure on $\bN$ and $\gamma$ and $\beta$ are two non-negative constants.  The concrete expression of this resolvent will be reviewed in Appendix~\ref{APPB} for {the readers}' convenience.  Although this argument is simple to follow,  {it does not provide a clear explanation of} the probabilistic meaning of these parameters.  The second way, given by Wang in 1958 (see \cite{WY92}), provides a probabilistic construction for all $Q$-processes. It turns out that every $Q$-process can be approximated by a sequence of Doob processes,  which allows for a characterization of the $Q$-process by certain parameters with specific probabilistic interpretations. This construction is more friendly to probabilists,  {but} the argument is long and somewhat complicated.

The Ray-Knight compactification,  {introduced} by Ray \cite{R59} and firmly established after a revision by Knight \cite{K65},  provides a way to modify an (almost arbitrary) Markov process  in a slight manner {to get} a nice Markov process on a {larger topological} space,  called {a} \emph{Ray process}.  The original process is assumed to enjoy neither {the strong Markov} property {nor sample path regularity beyond Borel measurability. } However,  the resulting Ray process is c\`adl\`ag and strongly Markovian.  Additionally,  the quasi-left-continuity of sample paths also holds in a certain {sense}.  Loosely speaking, a Ray process is analogous to a Feller process, differing only in the potential existence of \emph{branching points} in the state space: if $x$ is a branching point, then $\bar{P}_0(x,\cdot)$ is a probability measure on the set of non-branching points, rather than the Dirac measure $\delta_x$. Here,  $(\bar{P}_t)_{t\geq 0}$ denotes the transition semigroup of {the} Ray process.  
More heuristically,  when the Ray process almost reaches a branching point $x$, it will never arrive at $x$ but will instead “branch” to a non-branching position according to the distribution $\bar{P}_0(x,\cdot)$.  For further details, we refer {the readers} to,  e.g.,  \cite{CW05},  \cite{GR75} and \cite{RW00}.

{In this paper, we will introduce another method to study $Q$-processes using the Ray-Knight compactification. } This approach yields a Ray process on {$\bN\cup \{\infty\}\cup \{\partial\}$}, which is a c\`adl\`ag modification of {the} given $Q$-process.  Particularly,  only the additional point $\infty$ is possibly branching (see Lemma~\ref{LM33}~(2)).  As a result, there are only two essentially different cases: $\infty$ is either branching or non-branching.  In the former case where $\infty$ is branching,  this Ray process (which is not distinguished from the $Q$-process in our context because they have the same transition matrix) is precisely a Doob process (see Theorem~\ref{THM41}).  The latter case is more interesting,  and in this case,  the Ray process becomes a Feller process; see,  e.g.,  \cite[III (37.1)]{RW00}.  We call it a \emph{Feller $Q$-process}.  

With the help of {the} Feller property,  we can {further} characterize Feller $Q$-processes {both analytically and probabilistically}.  {Analytically},  the infinitestimal operator of a Feller $Q$-process is derived in Theorem~\ref{THM62}. The boundary condition \eqref{eq:74} provides insight into the meaning of the triple $(\nu, \gamma, \beta)$ that determines the resolvent: the measure $\nu$ represents the \emph{non-local} jumping intensity from $\infty$ to $\bN$,  $\gamma$ represents the killing intensity at $\infty$,  and $\beta$ represents the reflecting intensity at $\infty$.  Here and throughout, non-local (resp. local) jumps refer to jumps between non-adjacent (resp. adjacent) positions.  Further explanation will be {given} in Section \ref{SEC23-2}. It is also worth noting that $\nu$ plays a similar role to the L\'evy measure for a L\'evy process, although the non-local jumps of a $Q$-process only occur at flying times.  {Probabilistically},  we present a construction of Feller $Q$-processes using {the killing transform and the Ikeda-Nagasawa-Watanabe piecing out procedure} (see \cite{INW66} and also Appendix~\ref{APPA}).  The {starting point} of this construction is a special $Q$-process, called the $(Q,1)$-process in the terminology of \cite{WY92}.  It is the unique honest,  symmetrizable $Q$-process that satisfies both the backward and forward Kolmogorov equations; see,  e.g.,  \cite[Theorem~11.1]{F59}.  (Strictly speaking,  it exists only {when} $\infty$ is regular.)  Then,  a Feller $Q$-process can be constructed pathwisely as follows: break a path of the $(Q,1)$-process at a specific flying time and then piece together a renewal sample path that starts from a randomly chosen position in $\mathbb{N}$ according to $\nu$; {repeat these two procedures indefinitely until the path enters the trap $\partial$}. This construction can be made rigorous when $\nu$ is a finite measure.  However,  if $\nu$ is not a finite measure,  selecting a random position in $\mathbb{N}$ according to $\nu$ may be problematic.  In fact,  similar to a L\'evy process with an infinite L\'evy measure,  the $Q$-process in this case includes numerous ``small" non-local jumps near $\infty$ (accordingly, a $Q$-process with finite $\nu$ only allows ``big" non-local jumps).  These ``small" non-local jumps make the $Q$-process resemble a ``diffusion" near $\infty$ rather than a jump process.  Although the heuristic construction above is not applicable to such a $Q$-process,  it can be approximated by a specific sequence of $Q$-processes with only ``big" non-local jumps (see Theorem~\ref{THM92}).  Our approximating strategy in this result is only valid when $\infty$ is regular. We intend to further address this issue for general cases in a future {paper}.


The approach involving Ray-Knight compactification is simpler and more comprehensible than the two approaches previously introduced in \cite{F59} and \cite{WY92}.  It is especially accessible to those familiar with the general theory of Markov processes.  {However, most proofs in this treatment still depend on the resolvent representation of $Q$-processes derived through the analytic approach in \cite{WY92}.}

Finally, let us briefly examine symmetrizable $Q$-processes.
A $Q$-process with transition matrix $(p_{ij}(t))$ is called \emph{symmetrizable} if there exists a $\sigma$-finite measure $\mu$ on $\bN$ such that $\mu_ip_{ij}(t)=\mu_j p_{ji}(t)$ for all $i,j\in \bN$ and $t\geq 0$,  where $\mu_i:=\mu(\{i\})$.  A {symmetrizing} measure of $(p_{ij}(t))$ is always a {symmetrizing} measure of $Q$,  that is $\mu_i q_{ij}=\mu_j q_{ji}$ for all $i,j\in \bN$; see \cite[Lemma~6.6]{C04}.  Hence, $\mu$ is unique up to a multiplicative constant.  In this paper, we always consider $\mu$ as
\begin{equation}\label{eq:03}
\mu_0=1,  \quad \mu_k=\frac{b_0b_1\cdots b_{k-1}}{a_1a_2\cdots a_k}, \; k\geq 1.  
\end{equation}
This measure is also known as the \emph{speed measure} of the (minimal) $Q$-process. 
Typical examples of symmetrizable $Q$-processes are the minimal one and the $(Q,1)$-process.  

When dealing with symmetrizable $Q$-processes, we {will use} Dirichlet forms.  For terminologies and notations concerning Dirichlet forms, we refer {the readers} to \cite{C04}, \cite{CF12}, and \cite{FOT11}.
A symmetrizable $Q$-process is always associated with a Dirichlet form.   Particularly, the minimal $Q$-process corresponds to the \emph{minimal Dirichlet form},  while $(Q,1)$-process corresponds to the \emph{basic Dirichlet form}; see \cite[\S6]{C04} and also Section~\ref{SEC2}.  With the help of the theory of Dirichlet forms,  we can further show that other symmetrizable $Q$-processes can be described as the killed processes of the $(Q,1)$-process parameterized by a constant $\kappa\in (0,\infty)$; see Theorem~\ref{THM73}. 




The remaining sections of this paper are organized as follows. In Section \ref{heuristics}, we will provide a summary of the main results, along with a {heuristic explanation of the main ideas}. Section \ref{SEC2} serves as an introduction to two important $Q$-processes: the minimal $Q$-process and the $(Q,1)$-process. These processes are closely tied to the uniqueness of $Q$-process. {We construct them using time-changed Brownian motion}, a method that is particularly convenient for {those familiar} with Dirichlet forms. Section \ref{SEC3} establishes the Ray-Knight compactification of a $Q$-process. Section \ref{SEC4} is dedicated to the case where $\infty$ is branching, which corresponds to the $Q$-process being a Doob process. The following sections will examine the other case, where $\infty$ is non-branching, resulting in the $Q$-process being a Feller process.


\subsection*{Notations}
{The following parameters will be frequently used}:
\[
	c_0=0,  \quad c_1=\frac{1}{2b_0}, \quad c_k=\frac{1}{2b_0}+\sum_{i=2}^k \frac{a_1a_2\cdots a_{i-1}}{2b_0b_1\cdots b_{i-1}}, \;k\geq 2.  
\]
This sequence is also known as the \emph{scale function} of the (minimal) $Q$-process. 

Let $\overline{\bR}:=[-\infty, \infty]$ denote the extended real number system.   
The Alexandroff compactification of $\bN$ is represented by $\bN\cup \{\infty\}$.  In other words,  $\bN\cup \{\infty\}$ equipped with a suitable metric becomes a compact metric space.  The trap point $\partial$ for a $Q$-process is typically treated as an isolated point that is separate from $\bN$,  except in special cases (see Remark~\ref{RM32} and the second paragraph in Section~\ref{SEC4}).  

For a positive measure $\nu$ on $\bN$,  let $\nu_k$ denote $\nu(\{k\})$ and let $|\nu|$ represent $\sum_{k\in \bN}\nu_k$.  Given an appropriate function $f$ on $\bN$,  $\nu(f)$ stands for the integration of $f$ with respect to $\nu$.  Regarding the measure $\mu$ in \eqref{eq:03},  if necessary, we always assume that $\mu(\{\infty\})=0$, thereby extending it to $\bN\cup \{\infty\}$.


In a strict sense,  a right process $X$ on a state space $E$ should be defined as a six-tuple $X=\{\Omega, \sF,  \sF_t, X_t, \theta_t, \mathbf{P}_x\}$,  where $(\Omega, \sF)$ represents the sample space,  $\sF_t$ is the filtration on the sample space,  $\theta_t$ represents the translation operator, and for every $x\in E$,  $\mathbf{P}_x$ is the probability measure on $(\Omega,\sF)$ that satisfies $\bP_x(X_0=x)=1$.  The trap point for $X$ is also denoted by $\partial$ if there is no danger of confusion,  and we also say $X$ is a right process on $E\cup \{\partial\}$.   Additional details regarding these notations can be found in \cite{S88}.  It is important to note that if $X$ is a Ray process instead of a right process,  most notations remain the same, but $\bP_x(X_0=x)=1$ only holds for non-branching points $x$.


\section{Main results and heuristics}\label{heuristics}

\subsection{$Q$-processes and Doob's separable modification}


Let $X=(X_t)_{t\geq 0}$ be  a $Q$-process on the sample space $(\Omega, \sF, \mathbf{P})$ with standard transition matrix $(p_{ij}(t))_{i,j\in \bN}$ and initial distribution being a probability measure on $\bN$.  This transition matrix is treated as an honest one on $\bN\cup \{\partial\}$,  where $\partial$ is the trap point.  
For every sample $\omega\in \Omega$,  $X_\cdot(\omega)$ is a $\overline{\bR}\cup \{\partial\}$-valued function,  and $\zeta(\omega):=\inf\{t\geq 0: X_t(\omega)=\partial\}$ is called the \emph{lifetime} of $X$.  {The index set $\bN$ is known as the \emph{minimal state space}.  Another important concept of the ``state space" is the so-called \emph{essential range},  defined to be the set of values $x\in \overline{\mathbb{R}}$ that are contained in the range of $X_\cdot(\omega)$ for a set of $\omega$ of positive probability.   A value in the essential range will be called a \emph{state} of $X$,  and it will be called a \emph{fictitious state} if and only if it is not in $\mathbb{N}$.  According to \cite[II \S4, Theorem~2]{C67},  the only possible fictitious state of $X$ is $\infty$.  (This is because,  $\bN$ is equipped with the discrete topology.)} 



One of the most crucial tools in the study of continuous-time Markov chains is Doob’s \emph{separable modification}.  This technique allows for the creation of a ``canonical version" of the $Q$-process that possesses the properties of being well-separable {(for its definition, see page 138 of \cite{C67})}, Borel measurable, and right lower semicontinuous; see \cite[II \S7, Theorem~1]{C67}. 


\subsection{Ray-Knight compactification of $Q$-processes}\label{SEC22}

Our approach to the Ray-Knight compactification of $X$ begins with Doob's canonical version.  However, for this approach in Theorem \ref{THM32},  only the Borel measurability of $X$ is required.  According to Lemma~\ref{LM33}~(1), the resulting Ray process $\bar{X}$ is a c\`adl\`ag modification of the given $Q$-process. The state space of $\bar{X}$ is $\bN\cup\{\infty\}\cup \{\partial\}$, where $\partial$ remains the trap in the context of Markov processes, regardless of whether $\infty$ is a fictitious state for $X$ or not.  As mentioned in Section~\ref{SEC1},  we call $\bar{X}$ a Ray $Q$-process.  Depending on whether $\infty$ is branching or not, it is classified into two categories: Doob processes and Feller $Q$-processes.



{Doob processes never arrive at $\infty$, but Feller $Q$-processes always do.} 
In the context of general Markov processes, the initial distribution of a Feller $Q$-process can be a probability measure on $\bN\cup \{\infty\}$, rather than just on $\bN$.


Doob processes are not Feller processes on $\mathbb{N}\cup\{\infty\}\cup \{\partial\}$ because they do not satisfy the quasi-left-continuity at the first flying time.  However, a specific Doob process,  the minimal $Q$-process,  can be considered as a Feller process on $\mathbb{N}\cup \{\partial\}$ if we identify the trap $\partial$ with $\infty$ (namely,  $\mathbb{N}\cup \{\partial\}$ is perceived as the Alexandroff compactification of $\mathbb{N}$).



\subsection{Boundary behaviours of Feller $Q$-processes}\label{SEC23-2}

The resolvent of a Feller $Q$-process can be characterized by a triple $(\nu, \gamma,  \beta)$, {where $\nu$ is a positive measure on $\bN$ and $\gamma$ and $\beta$ are two non-negative constants,  as in Lemma~\ref{LM62}}. By utilizing this resolvent representation,  the infinitesimal generator of the process is formulated in Theorem~\ref{THM62}.  It is a sub-operator of the \emph{maximal (discrete) generalized second order differential operator} {satisfying} the following boundary condition: for any $F$ in the domain of the infinitesimal generator, 
\begin{equation}\label{eq:21-2}
{\frac{\beta}{2}} F^+(\infty)+\sum_{k\geq 0}(F(\infty)-F(k))\nu_k +\gamma F(\infty)=0,
\end{equation}
where $F^+$ is the (discrete) first order derivative of $F$ as defined in \eqref{eq:61-2}. 


{We first give some special cases with $|\nu|=0$. We note in passing that case (a) does not satisfy all the conditions in Lemma~\ref{LM62}.}
\begin{itemize}
\item[(a)] When $|\nu|=0$ and $\gamma\neq 0, \beta=0$,  \eqref{eq:21-2} represents the \emph{Dirichlet boundary condition}.  In this case, the $Q$-process reduces to the minimal one.  (In the setting of Appendix~\ref{APPB},  $\gamma=0$ with $|\nu|=\beta=0$ also corresponds to the minimal $Q$-process.) 
\item[(b)] When $|\nu|=0$ and $\gamma=0, \beta\neq 0$,  \eqref{eq:21-2}  represents the \emph{Neumann boundary condition}.  The corresponding $Q$-process is known as the $(Q,1)$-process, but it could be better called the \emph{reflecting $Q$-process}.  It is worth noting that this case occurs only when $\infty$ is regular. 
\item[(c)] When $|\nu|=0$ and $\gamma,\beta\neq 0$,  \eqref{eq:21-2} represents the \emph{Robin boundary condition}, as established in \cite[Theorem~11.1]{F59}.  The corresponding $Q$-process is known as the \emph{elastic $Q$-process}.  This situation also occurs only when $\infty$ is regular.
\end{itemize}
These boundary conditions, in the context of Laplace operators, correspond to minimal (or absorbing) Brownian motion, reflecting Brownian motion, and elastic Brownian motion, respectively.

Let us provide a probabilistic interpretation of \eqref{eq:21-2} for the special cases outlined above.  The cases (a) and (b) are clear and require no further explanation. We now focus on the Robin boundary condition (c). The corresponding elastic $Q$-process $\bar{X}$ is a symmetrizable $Q$-process with the parameter $\kappa=\gamma/\beta$, as stated in Theorem~\ref{THM73}.  It can be described as the killed process of the $(Q,1)$-process at the killing time
\begin{equation}\label{eq:13}
	\zeta=\inf\{t>0: L_t>\xi\},
\end{equation}
where $(L_t)_{t\geq 0}$ represents the local time of the $(Q,1)$-process at $\infty$,  as detailed in,  e.g., \cite[Theorem~A.2.11]{FOT11}. In other words,  $(L_t)_{t\geq 0}$ is the positive continuous additive functional (PCAF for abbreviation) associated with the Dirac measure at $\infty$. Additionally,  $\xi$ is {an exponential random variable with mean} $\beta/\gamma$,  independent of the $(Q,1)$-process. The function $L_t$ in $t$ only increases when the $(Q,1)$-process visits $\infty$,  {similar to} the behaviour of the local time for reflecting Brownian motion. Therefore, $L_t$ can be used to quantify the intensity of reflection at $\infty$ before time $t$. Equation \eqref{eq:13} signifies that $\xi$ captures the maximum intensity of reflection for $\bar{X}$ at $\infty$ before it is killed. As the mean of $\xi$ is $\beta/\gamma$, we {see} that larger values of $\beta$ result in longer durations of reflection for $\bar{X}$ at $\infty$, while larger values of $\gamma$ lead to earlier termination. We can thus interpret $\beta$ as the mean reflecting intensity and $\gamma$ as the mean killing intensity for $\bar{X}$, offering valuable insights into the boundary behaviours of the process.


Regarding the general boundary condition \eqref{eq:21-2}, we adopt the convention $F(\partial)=0$ and define $\tilde{\nu}|_{\bN}:=\nu,\tilde{\nu}({\partial}):=\gamma$ and $|\tilde{\nu}|:=|\nu|+\gamma$.  Then \eqref{eq:21-2} can be written as
\begin{equation}\label{eq:22-2}
{\frac{\beta}{2}} F^+(\infty)+\sum_{k\in \bN\cup \{\partial\}}(F(\infty)-F(k))\tilde \nu_k =0.
\end{equation}
{The corresponding $Q$-process $\bar X$ can be constructed from the $(Q,1)$-process in the following informal way: Let $\xi$ be {an exponential random variable with mean} $\beta/|\tilde{\nu}|$  similar to that in \eqref{eq:13}, but with the corresponding $\zeta$ being the time of the first non-local jump, rather than the killing time. When this jump occurs, the position that $\bar X$ reaches is determined by the ``distribution" $\tilde{\nu}/|\tilde{\nu}|$.  Repeat this operation until $\bar X$ enters the trap $\partial$. In this context, the parameter $\beta$ {still plays} its role of reflecting intensity. However, the parameter $\tilde{\nu}$, as opposed to $\gamma$ in case (c), possesses a more intricate and profound significance.}

This probabilistic interpretation for the general boundary condition \eqref{eq:22-2} is made rigorous in Theorem~\ref{THM61} under the condition $|\nu|<\infty$. However, when $|\nu|=\infty$, the construction appears logically inconsistent as $\beta/|\tilde{\nu}|=\tilde{\nu}_k/|\tilde{\nu}|=0$ for all $k\in \bN\cup \{\partial\}$.  {For this ``pathological" case,  although we have not found any pathwise construction for birth-and-death processes, it can be explained that $\bar{X}$ can indeed return to $\mathbb{N}$ from $\infty$ through non-local jumps, even though these non-local jumps may not be easily observable.}
To convince the readers,  consider the special case where $\infty$ is an exit  (thus $\beta=0$) with $|\nu|=\infty$ and $\gamma=0$. Let
\[
	\tau^1_0:=\inf\{t>\eta: \bar X_t=0\}
\]
represent the first hitting time of $0$ after the first flying time $\eta$ as defined in \eqref{eq:30} for $\bar{X}$. We further define
\[
\tau:=\sup\{t: t\leq \tau^1_0,  t\in L(\omega)\},
\]	
where $L(\omega)$ is the set of flying times as defined before Corollary~\ref{COR35}. In other words, $\tau$ represents the last flying time before $\tau^1_0$.  Due to the c\`adl\`ag property of $\bar X$ and Corollary~\ref{COR35}, it is evident that $\bar X_{\tau-}=\infty$.  Additionally, an important finding in \cite[\S6.5, Theorem~2]{WY92} reveals that $\mathbf{P}(\bar X_\tau=\infty)=0$, and if $\nu_j>0$ for $j\in \bN$, then $\mathbf{P}(\bar X_\tau=j)>0$. 


The boundary condition \eqref{eq:21-2} also provides guidelines for symmetrization of $Q$-processes.  Informally stated,  a symmetrizable $Q$-process remains the same after being reversed in time.  Therefore, it cannot exhibit non-local jumps from $\infty$ to $\mathbb{N}$,  as no $Q$-processes allow for non-local jumps from $\mathbb{N}$ to $\infty$.  This explains why the minimal $Q$-process, the $(Q,1)$-process, and the elastic $Q$-processes mentioned in (a), (b), and (c) are all the symmetrizable $Q$-processes, as asserted in Theorem~\ref{THM73}.


 \subsection{Comparison with diffusions on an interval}
 
 As suggested by Feller \cite{F59}, a $Q$-process can be considered as a “diffusion process”. A typical diffusion process $Y=(Y_t)_{t\geq 0}$ on a closed interval $[l,r]$ is {described by} a \emph{scale function} $s$ and a \emph{speed measure} $m$. The scale function $s$ is a continuous and strictly increasing function on the interval $(l,r)$, while the speed measure $m$ is a fully supported Radon measure on $(l,r)$. The pair $(s,m)$, which characterizes the behaviour of $Y$ within the interior of $[l,r]$, plays a role similar to that of $Q$ in birth and death processes.

In the case of a Feller diffusion process on $[l,r]$,  every function in the domain of its infinitesimal generator satisfies an analogous boundary condition at a regular or exit boundary $r$ (or $l$) to \eqref{eq:21-2}. However, an additional \emph{sojourn term} may appear in this boundary condition, as discussed in \cite[II \S5, Theorem~2]{M68}.  The non-trivial sojourn at $r$ indicates that the set $\{t: Y_t=r\}$ has a positive Lebesgue measure, suggesting that the diffusion process spends a significant amount of time at the boundary. Fortunately, when dealing with $Q$-processes, we do not encounter this situation, as the total visiting time at $\infty$ for a $Q$-process must be zero.
 
 
 Nevertheless, a \emph{generalized $Q$-process} can still be considered, allowing for non-trivial sojourn at $\infty$ (see, e.g., \cite[\S12-15]{F59}). In this {case}, it is necessary to include $\infty$ in the minimal state space. Additionally,  the real birth-death matrix should be an extension of $Q$ to $\mathbb{N}\cup \{\infty\}$. In the context of the Kolmogorov backward equation \eqref{eq:02}, this generalized $Q$-process corresponds to a solution $(p_{ij}(t))_{i,j\in \mathbb{N}}$ that satisfies the inequality
\[
	\sum_{k\in \bN}p_{ik}(t)p_{kj}(t)\leq p_{ij}(t+s),\quad i,j\in \bN,  t,s\geq 0,
\]
 instead of the Chapman-Kolmogorov equation.
 

\subsection{Comparison with L\'evy processes}

Non-local jumps of a $Q$-process with $|\nu|=\infty$ can also be compared with L\'evy jumps.  Let us imagine $\mathbb{N}\cup \{\infty\}$ as an interval $[l,r]$ where $r$ corresponds to $\infty$. Similarly, let $(c, \mu)$ be imagined as the pair of scale function $s$ and speed measure $m$ on $[l,r]$. For simplification, we assume $r=0$. Then the first identity in \eqref{eq:B1} can be written as
\begin{equation}\label{eq:24-2}
\int_{[l,0)} \left( \int_x^0 m([l,y])ds(y)\right) \hat{\nu}(dx)<\infty,\end{equation}
where $\hat{\nu}$ is the analogue of $\nu$ on $[l,0]$. In the Brownian case, where $s(x)=x$ and $m$ is the Lebesgue measure, \eqref{eq:24-2} amounts to
\[\hat{\nu}([l, e])<\infty,\quad 	\int_{[e,0)} |x|\hat{\nu}(dx)<\infty,
\]
where $e$ is an arbitrary fixed point in $(l,0)$. By this observation, $\nu$ has an analogous form to the L\'evy measure of a \emph{subordinator}.




\section{Uniqueness criteria}\label{SEC2}

\subsection{Minimal $Q$-process as a quasidiffusion}\label{SEC21}

Let $\widehat{\bN}:=\{c_k: k\geq 0\}$ and 
\[
\Xi: \widehat{\bN}\rightarrow \bN,\quad c_k\mapsto k.  
\]
Denote by $\widehat{\mu}:=\mu\circ \Xi$ the positive measure on $\widehat{\bN}$ with $\widehat{\mu}(\{c_k\}):=\mu_k$ for $k\geq 0$.  Set $c_\infty:=\lim_{k\uparrow \infty} c_k\leq \infty$.  

We provide a probabilistic construction of the minimal $Q$-process using a time change transformation. 
Firstly,  let us consider a Brownian motion $W^\text{min}:=(W^\text{min}_t)_{t\geq 0}$ with lifetime $\zeta^{W^\text{min}}$ on the interval $[c_0, c_\infty)$,  which is reflected at $c_0$ and absorbed at $c_\infty$ {if} $c_\infty<\infty$.  In other words,  $W^\text{min}$ is symmetric with respect to the Lebesgue measure and associated with the regular Dirichlet form $(\frac{1}{2}\mathbf{D}, H^1_0([c_0,c_\infty))$ on $L^2([c_0,c_\infty))$: 
\[
\begin{aligned}
&H^1_0([c_0,c_\infty))=\{f\in L^2([c_0,c_\infty)): f\text{ is absolutely continuous on }(c_0,c_\infty) \\
 &\qquad\qquad \qquad \qquad\qquad\qquad \text{and } f'\in L^2([c_0,c_\infty)),  f(c_\infty)=0\text{ if }c_\infty<\infty\},\\
&\frac{1}{2}\mathbf{D}(f,g)=\frac{1}{2}\int_{c_0}^{c_\infty} f'(x)g'(x)dx,\quad f,g\in H^1_0([c_0,c_\infty)).   
\end{aligned}
\]
Next,  we use $\widehat{\mu}$ to obtain a time-changed Brownian motion $\widehat W^\text{min}=(\widehat W^\text{min}_t)_{t\geq 0}$ on $\widehat{\bN}$.  More precisely,  $\widehat{\mu}$,  viewed as a measure on $[c_0,c_\infty)$ by setting $\widehat{\mu}([c_0,c_\infty)\setminus \widehat{\bN})=0$,  is clearly a Radon smooth measure with respect to $W^\text{min}$,  and hence induces a PCAF $({A}^\text{min}_t)_{t\geq 0}$.  Note that ${A}^\text{min}_t=\sum_{k\geq 0} \mu_k  L^\text{min}_t({c_k})$,  where ${L}^\text{min}_t({c_k})$ is the local time of $W^\text{min}$ at $c_k$.  We set 
\[
\tau^\text{min}_t:=\left\lbrace 
	\begin{aligned}
		&\inf\{s>0: {A}^\text{min}_s>t\},\quad t<{A}^\text{min}_{{\zeta}^{W^\text{min}}-},\\
		&\infty,\qquad\qquad\qquad\;\; t\geq {A}^\text{min}_{{\zeta}^{W^\text{min}}-}. 
	\end{aligned}
\right.
\]
Then,  $\widehat W^\text{min}_t:=W^\text{min}_{\tau^\text{min}_t}, t\geq 0$, define a time-changed Brownian motion on $\widehat{\bN}$  with the lifetime $\widehat{\zeta}^\text{min}:={A}_{{\zeta}^{W^\text{min}}-}$.  It is also called a \emph{quasidiffusion} in,  e.g.,  \cite{BK87,  K86},  a \emph{generalized diffusion} in,  e.g.,  \cite{W74,  KW82} and a \emph{gap diffusion} in,  e.g.,  \cite{K81}. The measure $\widehat{\mu}$ is called the speed measure of $\widehat W^\text{min}$.  Finally, we define
\begin{equation}\label{eq:21}
	X^\text{min}_t:=\Xi(\widehat W^\text{min}_t),\quad t\geq 0.  
\end{equation}
The lifetime of $X^\text{min}$ is denoted by $\zeta^\text{min}$.  
This process is actually the desired minimal $Q$-process.

\begin{lemma}\label{LM21}
The process $X^\text{min}=(X^\text{min}_t)_{t \geq 0}$,  as defined in \eqref{eq:21}, is {equal to} the minimal $Q$-process.  
\end{lemma}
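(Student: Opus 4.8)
The plan is to identify the Dirichlet form of $X^{\mathrm{min}}$ on $L^2(\bN,\mu)$ with the minimal Dirichlet form attached to $Q$, and then to invoke the correspondence between that form and the minimal $Q$-process (see \cite[\S6]{C04}). To this end I first record the elementary identity $\mu_k b_k=\tfrac{1}{2(c_{k+1}-c_k)}$ for $k\geq 0$, which is immediate from \eqref{eq:03} and the definition of the scale sequence $(c_k)$. For a function $f$ on $\widehat\bN$ let $\tilde f$ denote its continuous extension to $[c_0,c_\infty)$ that is affine on each $[c_k,c_{k+1}]$; then a one-line computation gives
\[
\tfrac12\bD(\tilde f,\tilde f)=\sum_{k\geq 0}\frac{\bigl(f(c_{k+1})-f(c_k)\bigr)^2}{2(c_{k+1}-c_k)}=\sum_{k\geq 0}\mu_k b_k\bigl(f(c_{k+1})-f(c_k)\bigr)^2 .
\]

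Next, by the theory of time changes of symmetric Dirichlet forms (\cite[\S6.2]{FOT11}, \cite[\S5]{CF12}), $\widehat W^{\mathrm{min}}$ is the trace of $W^{\mathrm{min}}$ on the quasi support of $\widehat\mu$. Since $\widehat\mu(\{c_k\})=\mu_k>0$ for every $k$ and $c_\infty\notin[c_0,c_\infty)$, this support is exactly $\widehat\bN$, so $\widehat W^{\mathrm{min}}$ is a $\widehat\mu$-symmetric Hunt process on $\widehat\bN$ whose Dirichlet form on $L^2(\widehat\bN,\widehat\mu)$ is the trace of $(\tfrac12\bD,H^1_0([c_0,c_\infty)))$. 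Because $\widehat\mu$ is carried by the discrete set $\widehat\bN$, the trace energy of $f$ is the infimum of $\tfrac12\bD(g,g)$ over all extensions $g$ of $f$; for finitely supported $f$ this infimum is attained at the piecewise affine $\tilde f$ above, so, combined with the display, the trace form equals $(\check{\mathcal E},\check{\mathcal F})$ with $\check{\mathcal E}(f,f)=\sum_{k\geq 0}\mu_k b_k(f(c_{k+1})-f(c_k))^2$ and $\check{\mathcal F}$ the $\check{\mathcal E}_1$-closure of such $f$, which inherits the absorbing condition at $c_\infty$ from $H^1_0([c_0,c_\infty))$.

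Transporting $(\check{\mathcal E},\check{\mathcal F})$ along the homeomorphism $\Xi\colon\widehat\bN\to\bN$ then produces the regular Dirichlet form on $L^2(\bN,\mu)$ with $\mathcal E(f,g)=\sum_{k\geq 0}\mu_k b_k(f(k+1)-f(k))(g(k+1)-g(k))$ on the $\mathcal E_1$-closure of the finitely supported functions, i.e. precisely the minimal Dirichlet form of $Q$ (see \cite[\S6]{C04}). Hence $X^{\mathrm{min}}=\Xi(\widehat W^{\mathrm{min}})$ is the $\mu$-symmetric Hunt process associated with that form; its transition matrix is standard with density matrix $Q$ by the cited identification. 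Moreover $W^{\mathrm{min}}$, and therefore $X^{\mathrm{min}}$, is killed exactly when the Brownian path reaches $c_\infty$, which by Corollary~\ref{COR35} is the first flying time of $X^{\mathrm{min}}$; since being killed at the first flying time characterises the minimal $Q$-process, $X^{\mathrm{min}}$ is the minimal $Q$-process. (When $c_\infty=\infty$ there is no killing and $X^{\mathrm{min}}$ is honest, consistent with $\infty$ being an entrance or natural boundary.)

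The step I expect to be the main obstacle is the trace-form computation together with the handling of the boundary point $c_\infty$: one must check that the quasi support of $\widehat\mu$ is exactly $\widehat\bN$, that the absorbing condition at $c_\infty$ carried by $H^1_0([c_0,c_\infty))$ descends to $\check{\mathcal F}$ so that one lands on the minimal — and not a larger — Dirichlet form of $Q$, and that the killing of $W^{\mathrm{min}}$ at $c_\infty$ really coincides with the first flying time of $X^{\mathrm{min}}$ in the sense of Corollary~\ref{COR35}. The interpolation identity and the transfer through $\Xi$ are routine.
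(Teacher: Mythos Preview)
Your approach is essentially the same as the paper's: identify the Dirichlet form of $X^{\mathrm{min}}$ on $L^2(\bN,\mu)$ with the minimal Dirichlet form of $Q$, and then invoke \cite[Proposition~6.59]{C04}. The only difference is that the paper outsources the trace-form computation to \cite[Theorems~3.4 and~3.6]{L22}, obtaining the explicit description $\widehat{\sG}^{\mathrm{min}}=\{\widehat f\in L^2(\widehat\bN,\widehat\mu):\widehat{\sA}^{\mathrm{min}}(\widehat f,\widehat f)<\infty,\ \widehat f(c_\infty)=0\text{ if }c_\infty<\infty\}$ directly, whereas you compute the trace by hand via piecewise-affine interpolation and the time-change theory in \cite{FOT11,CF12}. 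Your computation is correct in spirit, and you have rightly flagged the only genuine subtlety: checking that the trace domain really carries the absorbing condition at $c_\infty$ (equivalently, that the closure of finitely supported functions coincides with the domain described above). The paper's citation of \cite{L22} is precisely what settles this point cleanly.

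One cautionary remark: your final paragraph invokes Corollary~\ref{COR35} to argue that the killing time of $X^{\mathrm{min}}$ is the first flying time, hence $X^{\mathrm{min}}$ is minimal. This is both unnecessary and a forward reference in the paper's logical order (Corollary~\ref{COR35} lies in the next section and presupposes a given $Q$-process). Once the Dirichlet form is identified, \cite[Proposition~6.59]{C04} already delivers the minimal $Q$-process; you should drop the flying-time sentence entirely.
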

\begin{proof}
As shown in \cite[Theorems 3.4 and 3.6]{L22},  $\widehat{W}^\text{min}$ is symmetric with respect to $\widehat{\mu}$ and is associated with the regular Dirichlet form on $L^2(\widehat{\bN},\widehat{\mu})$:
\[
\begin{aligned}
&\widehat{\sG}^\text{min}:=\left\{\widehat{f}\in L^2(\widehat{\bN},\widehat{\mu}): \widehat{\sA}^\text{min}(\widehat{f},\widehat{f})<\infty,  \widehat{f}(c_\infty):=\lim_{k\uparrow \infty} \widehat{f}(c_k)=0\text{ if }c_\infty<\infty\right\}, \\
&\widehat{\sA}^\text{min}(\widehat{f},\widehat{g})=\frac{1}{2}\sum_{k\geq 0}\frac{(\widehat{f}(c_{k+1})-\widehat{f}(c_k))(\widehat{g}(c_{k+1})-\widehat{g}(c_k))}{|c_{k+1}-c_k|},\quad \widehat{f},\widehat{g}\in \widehat{\sG}^\text{min}.  
\end{aligned}\]
Hence, $X^\text{min}$ is symmetric with respect to $\mu$ and is associated with the regular Dirichlet form on $L^2(\bN,\mu)$:
\[
\begin{aligned}
&{\sG}^\text{min}:=\left\{{f}\in L^2({\bN},{\mu}): {\sA}^\text{min}({f},{f})<\infty,  {f}(\infty):=\lim_{k\uparrow \infty}f(k)=0\text{ if }c_\infty<\infty\right\}, \\
&{\sA}^\text{min}({f},{g})=\frac{1}{2}\sum_{k\geq 0}\frac{({f}({k+1})-{f}(k))({g}({k+1})-{g}(k))}{|c_{k+1}-c_k|},\quad {f},{g}\in {\sG}^\text{min}.  
\end{aligned}\]
Applying \cite[Proposition~6.59]{C04},  we can conclude that $X^\text{min}$ is {equal to} the minimal $Q$-process.  
\end{proof}
\begin{remark}\label{RM32}
{In this proof, the trap point $\partial$ for $X^\text{min}$ is assumed to be the Alexandroff compactification point of $\bN$, that is, $\partial=\infty$.  With this setting, $X^\text{min}$ is a Hunt process.  When $\partial\neq \infty$ is considered as an isolated point {attached} to $\bN$,   we may put $(\sA^\text{min},\sG^\text{min})$ on $L^2(\bN\cup\{\infty\},  \mu)$,  where $\mu$ extends to $\bN\cup\{\infty\}$ by $\mu(\{\infty\})=0$.  It is quasi-regular rather than regular.  Accordingly, $X^\text{min}$ can be viewed as a Borel standard process on $\bN\cup\{\infty\}$, and $\{\infty\}$ is an $\sA^\text{min}$-polar set. }
\end{remark}

The Dirichlet form $(\sA^\text{min},\sG^\text{min})$ in this proof is referred to as the \emph{minimal Dirichlet form} in \cite{C04}.  Another important Dirichlet form,  known as the \emph{basic Dirichlet form},  is defined as follows:
\[
\begin{aligned}
&{\sG}^*:=\left\{{f}\in L^2({\bN},{\mu}): {\sA}^*({f},{f})<\infty\right\}, \\
&{\sA}^*({f},{g})=\frac{1}{2}\sum_{k\geq 0}\frac{({f}({k+1})-{f}(k))({g}({k+1})-{g}(k))}{|c_{k+1}-c_k|},\quad {f},{g}\in {\sG}^*.  
\end{aligned}
\]
It corresponds to another symmetrizable $Q$-process,  namely the $(Q,1)$-process,  {when $\infty$ is a regular boundary as defined in Definition~\ref{DEF23}}; see,  e.g.,  \cite[Theorem~6.55]{C04}.  

\subsection{Uniqueness criteria}

Set two parameters
\[
	\sigma:=\int_{c_0}^{c_\infty}\widehat{\mu}([0,x])dx=\sum_{k=0}^\infty (c_{k+1}-c_k)\cdot \sum_{i=0}^k \mu_i, \quad \lambda:=\int_{c_0}^{c_\infty} x\widehat{\mu}(dx)=\sum_{k\geq 0} c_k \mu_k. 
\]
The following classification for the boundary point $\infty$ in Feller's sense is well known.

\begin{definition}\label{DEF23}
The boundary point $\infty$ (for $X^\text{min}$) is called
\begin{itemize}
\item[(1)] \emph{regular},  if ${\sigma}<\infty,  {\lambda}<\infty$;
\item[(2)] \emph{an exit},  if ${\sigma}<\infty,  {\lambda}=\infty$;
\item[(3)] \emph{{an entrance}},  if ${\sigma}=\infty,  {\lambda}<\infty$;
\item[(4)] \emph{natural},  if ${\sigma}= {\lambda}=\infty$.
\end{itemize}
\end{definition}
\begin{remark}
Note that $\infty$ is regular if and only if $c_\infty+\mu(\bN)<\infty$.   If $\infty$ is an exit,  then $c_\infty<\infty$ and $\mu(\bN)=\infty$.  If $\infty$ is {an entrance},  then $c_\infty=\infty$ and $\mu(\bN)<\infty$.  If $\infty$ is natural,  then $c_\infty+\mu(\bN)=\infty$. 
\end{remark}

It is also well known that {the transition matrix of $X^\text{min}$ is honest},  if and only if $\sigma=\infty$ (namely,  $\infty$ is {an entrance} or natural); see,  e.g.,  \cite[Proposition~7.9]{L22}.  As a result,  $\sigma=\infty$ is equivalent to the uniqueness of $Q$-processes.  In the case where $\infty$ is regular or an exit,  another uniqueness criterion has been proposed for symmetrizable $Q$-processes; see, e.g., \cite[Corollary~6.62]{C04}.  More precisely,  the symmetrizable {$Q$-process is unique} if and only if the basic Dirichlet form is {equal to} the minimal one,  i.e.,  $(\sA^*,\sG^*)=(\sA^\text{min},\sG^\text{min})$.  These uniqueness criteria are summarized in the following result.

\begin{theorem}\label{THM25}
\begin{itemize}
\item[\rm (1)] The symmetrizable {$Q$-process is unique} if and only if $\infty$ is not regular,  meaning that $c_\infty=\infty$ or $\mu(\bN)=\infty$.  
\item[\rm (2)] The uniqueness of $Q$-processes holds if and only if $\infty$ is {an entrance} or natural,  i.e. 
\[
\sum_{k=1}^\infty (c_{k+1}-c_k)\cdot \sum_{i=0}^k \mu_i =\infty.
\]
\end{itemize}
\end{theorem}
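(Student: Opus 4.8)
The plan is to reduce both assertions to facts already recorded in the excerpt: the Dirichlet-form description of $X^{\mathrm{min}}$ from Lemma~\ref{LM21} and Remark~\ref{RM32}, the scale/speed description of $\infty$ in Definition~\ref{DEF23}, and the cited criteria \cite[Proposition~7.9]{L22}, \cite[Proposition~6.59]{C04} and \cite[Corollary~6.62]{C04}. First I would prove part (2). The uniqueness of $Q$-processes is equivalent to the honesty of the minimal transition matrix on $\bN$ (stated right before Theorem~\ref{THM25} and recalled again in Section~\ref{SEC1}), and honesty of $X^{\mathrm{min}}$ is equivalent to $\sigma=\infty$ by \cite[Proposition~7.9]{L22}. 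So it only remains to identify $\sigma=\infty$ with ``$\infty$ is an entrance or natural'' and to unwind the definition of $\sigma$. By Definition~\ref{DEF23}, the entrance and natural cases are exactly the two cases with $\sigma=\infty$; and by the displayed formula for $\sigma$,
\[
\sigma=\sum_{k=0}^{\infty}(c_{k+1}-c_k)\cdot\sum_{i=0}^{k}\mu_i,
\]
which (since the $k=0$ term is $c_1\mu_0=\tfrac{1}{2b_0}<\infty$) diverges if and only if $\sum_{k\ge 1}(c_{k+1}-c_k)\sum_{i=0}^{k}\mu_i=\infty$. That is precisely the asserted identity, so (2) follows.

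For part (1) I would invoke the symmetric-uniqueness criterion \cite[Corollary~6.62]{C04}: a symmetrizable $Q$-process is unique if and only if the basic Dirichlet form coincides with the minimal one, $(\sA^*,\sG^*)=(\sA^{\mathrm{min}},\sG^{\mathrm{min}})$. Comparing the two forms as displayed after Remark~\ref{RM32}, the quadratic expressions $\sA^*$ and $\sA^{\mathrm{min}}$ are literally the same bilinear form; the domains differ only in the boundary constraint imposed when $c_\infty<\infty$, namely $f(\infty):=\lim_{k\to\infty}f(k)=0$ in $\sG^{\mathrm{min}}$ versus no such constraint in $\sG^*$. Hence the two forms agree precisely when this constraint is vacuous, which happens exactly in one of two situations: either $c_\infty=\infty$ (so there is never a constraint), or $c_\infty<\infty$ but every $f\in L^2(\bN,\mu)$ with $\sA^*(f,f)<\infty$ automatically has $\lim_k f(k)=0$. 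The latter forces $\mu(\bN)=\infty$: if $\mu(\bN)<\infty$ then the constant function $\mathbf 1$ lies in $\sG^*$ (finite energy, and $L^2$ since $\mu$ is finite) but violates the boundary condition, so $(\sA^*,\sG^*)\neq(\sA^{\mathrm{min}},\sG^{\mathrm{min}})$; conversely, if $c_\infty<\infty$ and $\mu(\bN)=\infty$, finite $\sA^*$-energy gives $\sum_k(f(k+1)-f(k))^2/|c_{k+1}-c_k|<\infty$, hence by Cauchy–Schwarz $f$ has a finite limit $f(\infty)$ along $\bN$, and $f\in L^2(\bN,\mu)$ together with $\mu(\bN)=\infty$ forces that limit to be $0$. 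Thus $(\sA^*,\sG^*)=(\sA^{\mathrm{min}},\sG^{\mathrm{min}})$ if and only if $c_\infty=\infty$ or $\mu(\bN)=\infty$, which by the Remark following Definition~\ref{DEF23} is exactly the statement that $\infty$ is not regular. This proves (1).

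The routine analytic points — that finite $\sA^*$-energy plus $c_\infty<\infty$ yields a finite boundary limit, and that $\mathbf 1\in\sG^*$ when $\mu$ is finite — are short Cauchy–Schwarz arguments and present no real difficulty. The only genuine obstacle is making sure the cited black boxes line up with the present normalization: one must check that ``honest on $\bN$'' in \cite[Proposition~7.9]{L22} is stated for the same time-changed process $X^{\mathrm{min}}$ constructed here (it is, via Lemma~\ref{LM21}), and that the basic/minimal Dirichlet forms in \cite[Corollary~6.62]{C04} are the ones written in Section~\ref{SEC21}. Once these identifications are in place, both parts are immediate, and indeed (1) also follows from (2) combined with the trivial observation that a unique $Q$-process is a fortiori the unique symmetrizable one, although the cleaner route is the Dirichlet-form comparison above since it pinpoints the regular case directly.
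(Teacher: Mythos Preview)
Your proof is correct and follows essentially the same route as the paper: part~(2) is dispatched via the honesty criterion $\sigma=\infty$, and part~(1) by comparing $(\sA^*,\sG^*)$ with $(\sA^{\mathrm{min}},\sG^{\mathrm{min}})$ through \cite[Corollary~6.62]{C04}, including the identical $L^2$ argument forcing $f(\infty)=0$ in the exit case. The only difference is how non-uniqueness is shown when $\infty$ is regular: you exhibit the constant function $\mathbf 1\in\sG^*\setminus\sG^{\mathrm{min}}$ directly, whereas the paper instead forward-references Section~\ref{SEC23}, where the $(Q,1)$-process is built as a second symmetrizable $Q$-process.
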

\begin{proof}
The second assertion is clear.  We only prove the first one. When $\infty$ is regular,  we will obtain another symmetrizable $Q$-process in Section~\ref{SEC23}, which demonstrates that the uniqueness fails. When $\infty$ is an exit,  $c_\infty<\infty$ and thus $f(c_\infty)$ exists for any $f\in \sG^*$.  However,  we have $\mu(\bN)=\infty$.  It follows that $f(c_\infty)=0$ because $f\in L^2(\bN,\mu)$.  Therefore,  $(\sA^*,\sG^*)=(\sA^\text{min},\sG^\text{min})$ and the first assertion can be concluded.  This completes the proof. 
\end{proof}

\subsection{$(Q,1)$-process}\label{SEC23}

In this subsection, we examine the case where $\infty$ is regular and present a probabilistic construction for another symmetrizable $Q$-process,  namely the $(Q,1)$-process.  This construction is analogous to the one used to obtain the minimal $Q$-process in Section~\ref{SEC21}.   

Note that $c_\infty<\infty$ and $\widehat{\mu}(\widehat{\bN})=\mu(\bN)<\infty$.  
We take a reflecting Brownian motion $W^*:=(W^*_t)_{t\geq 0}$ on the closed interval $[c_0,c_\infty]$ and extend  the measure $\widehat{\mu}$ to $[c_0,c_\infty]$ by letting $\widehat{\mu}([c_0,c_\infty]\setminus \widehat{\bN}):=0$.  Note that $\widehat{\mu}$ is a Radon smooth measure with respect to $W^*$ and corresponds to the PCAF ${A}^*_t=\sum_{k\geq 0}\mu_k L^*_t(c_k)$,  where $L^*_t(c_k)$ represents the local time of $W^*$ at $c_k$.  Set $\tau^*_t:=\inf\{s>0: A^*_s>t\}$ for any $t\geq 0$ and then define a time-changed Brownian motion on $\widehat{\bN}\cup \{c_\infty\}$: 
\[
\widehat{W}^*_t:=W^*_{\tau^*_t},\quad t\geq 0. 
\]
Finally let
\begin{equation}\label{eq:22}
	X^*_t:= \Xi(\widehat{W}^*_t),\quad t\geq 0.  
\end{equation}
It is easy to verify that $X^*=(X^*_t)_{t\geq 0}$ is a $\mu$-symmetric Markov process on $\bN\cup \{\infty\}$.  Denote by $\mathbf{P}^*_i$ the probability measure in the six-tuple representation of $X^*$ for all $i\in \bN\cup \{\infty\}$.   

\begin{proposition}\label{PRO23}
Let $X^*$ be defined as \eqref{eq:22}.  Then the Dirichlet form of $X^*$ on $L^2(\bN\cup\{\infty\},\mu)$ is equal to the basic Dirichlet form $(\sA^*,\sG^*)$,  and
\begin{equation}\label{eq:33-2}
	p^*_{ij}(t):=\mathbf{P}^*_i(X^*_t=j),\quad i,j\in \bN, t\geq 0
\end{equation}
is a standard,  honest, symmetrizable transition matrix with density matrix $Q$.  
\end{proposition}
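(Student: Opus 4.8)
The plan is to follow the strategy already established for the minimal $Q$-process in Lemma~\ref{LM21}, transferring the Dirichlet-form identification from the level of the quasidiffusion $\widehat W^*$ on $\widehat{\bN}\cup\{c_\infty\}$ down to $X^*$ via the bijection $\Xi$, and then invoking the analytic characterization of the $(Q,1)$-process through its basic Dirichlet form. First I would recall, citing \cite[Theorems 3.4 and 3.6]{L22} (the same results used in the proof of Lemma~\ref{LM21}, now applied to reflecting rather than absorbing Brownian motion on the interval), that the time-changed process $\widehat W^*$ is $\widehat\mu$-symmetric and associated with the regular Dirichlet form on $L^2(\widehat{\bN}\cup\{c_\infty\},\widehat\mu)$ given by
\[
\widehat{\sA}^*(\widehat f,\widehat g)=\frac12\sum_{k\geq 0}\frac{(\widehat f(c_{k+1})-\widehat f(c_k))(\widehat g(c_{k+1})-\widehat g(c_k))}{|c_{k+1}-c_k|},
\]
with domain $\{\widehat f\in L^2(\widehat{\bN}\cup\{c_\infty\},\widehat\mu):\widehat{\sA}^*(\widehat f,\widehat f)<\infty\}$ — the key point being that since $W^*$ is \emph{reflected} (not absorbed) at $c_\infty$, there is no boundary condition $\widehat f(c_\infty)=0$ imposed, in contrast with $\widehat{\sG}^\text{min}$. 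Since $\Xi$ is a bimeasurable bijection carrying $\widehat\mu$ to $\mu$, pushing this Dirichlet form forward under $\Xi$ yields exactly $(\sA^*,\sG^*)$ on $L^2(\bN\cup\{\infty\},\mu)$; hence $X^*=\Xi(\widehat W^*)$ is $\mu$-symmetric and its Dirichlet form is the basic Dirichlet form.

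For the second assertion, I would then apply \cite[Theorem~6.55]{C04}, which (under the hypothesis that $\infty$ is regular, so that $c_\infty<\infty$ and $\mu(\bN)<\infty$ and the basic Dirichlet form genuinely differs from the minimal one) identifies the Markov process associated with $(\sA^*,\sG^*)$ as the $(Q,1)$-process. From this one reads off that the transition matrix $p^*_{ij}(t)=\mathbf{P}^*_i(X^*_t=j)$, $i,j\in\bN$, is standard with density matrix $Q$; symmetrizability with respect to $\mu$ is immediate from $\mu$-symmetry of $X^*$, i.e. $\mu_i p^*_{ij}(t)=\mu_j p^*_{ji}(t)$. Honesty on $\bN$ requires a separate argument: since $X^*$ lives on the compact space $\bN\cup\{\infty\}$ with no killing inside (the reflecting Brownian motion $W^*$ on $[c_0,c_\infty]$ has infinite lifetime, and the time change by the PCAF $A^*$ with $A^*_\infty=\infty$ does not kill), $X^*$ has infinite lifetime, so $\sum_{j\in\bN\cup\{\infty\}}p^*_{ij}(t)=1$; because $\{\infty\}$ has $\mu$-measure zero and, by regularity, is visited only on a set of times of Lebesgue measure zero (the total occupation time at $\infty$ is zero, as noted for all $Q$-processes in Section~\ref{SEC23-2}), one gets $p^*_{i\infty}(t)=\int \mathbf{P}^*_i(X^*_t=\infty)$ — more precisely, since $p^*_{i\cdot}(t)$ as a measure charges $\{\infty\}$ with mass $\mathbf P^*_i(X^*_t=\infty)$, and this can be shown to vanish for each fixed $t>0$ by Fubini against the occupation measure — so $\sum_{j\in\bN}p^*_{ij}(t)=1$.

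The main obstacle I anticipate is the honesty claim, specifically showing $\mathbf P^*_i(X^*_t=\infty)=0$ for each fixed $t>0$ (not merely for a.e.\ $t$, which is automatic from the zero occupation time and Fubini). The clean way around this is to note that $p^*_{i\infty}(t)$ is continuous in $t$ — or at least that the resolvent $\int_0^\infty e^{-\lambda t}p^*_{i\infty}(t)\,dt=0$ by the occupation-time argument, and then use that $p^*_{i\infty}(\cdot)$ is a nonnegative function whose Laplace transform vanishes, hence $p^*_{i\infty}(t)=0$ for a.e.\ $t$, and finally upgrade to all $t$ using the standardness and the Chapman–Kolmogorov inequality/semigroup continuity. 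Alternatively, one may simply invoke \cite[Theorem~6.55]{C04} (or \cite[Theorem~11.1]{F59}) to get honesty as part of the known description of the $(Q,1)$-process, sidestepping the issue entirely; I would present the Dirichlet-form identification as the substantive content and treat standardness, symmetrizability, and honesty as consequences of the cited characterization together with the fact that $X^*$ has infinite lifetime.
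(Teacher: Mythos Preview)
Your Dirichlet-form identification via \cite[Theorems 3.4 and 3.6]{L22} matches the paper exactly. The divergence is in the second assertion. You propose to cite \cite[Theorem~6.55]{C04} to identify the process with the $(Q,1)$-process and read off standardness, honesty, and the density matrix $Q$ from that identification. The paper instead argues directly: honesty comes from the fact that the quasidiffusion $\widehat W^*$ has transition kernel absolutely continuous with respect to $\widehat\mu$ (citing \cite{K75}), so $P^*_t(x,\{\infty\})=0$ for \emph{every} $t$ and $x$---this is cleaner than your Fubini/occupation-time route, whose a.e.-to-everywhere upgrade you rightly flag as the weak point. Standardness follows from right-continuity of paths. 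The substantive core of the paper's proof is a direct verification of $p'^*_{ij}(0)=q_{ij}$: one splits according to whether the first hitting time $\eta_\infty$ of $\infty$ exceeds $t$ (on that event the stopped process is the minimal $Q$-process, giving the limit $q_{ij}$), and the remaining work is the lemma $\lim_{t\downarrow 0}\mathbf P^*_i(\eta_\infty\le t)/t=0$, established by a strong-Markov contradiction argument. Your route is shorter but reduces Proposition~\ref{PRO23} to matching the construction against a pre-existing object from \cite{C04}; the paper's route is self-contained and turns the time-changed-Brownian-motion construction into a genuine existence proof for the $(Q,1)$-process, which is arguably the intent here.
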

\begin{proof}
By applying \cite[Theorems~3.4 and 3.6]{L22},  it can be shown that the Dirichlet form of $X^*$ on $L^2(\bN\cup\{\infty\}, \mu)$ is exactly the basic Dirichlet form $(\sA^*,\sG^*)$.  
Furthermore,  it should be noted that the time-changed Brownian motion $\widehat{W}^*$ is also a quasidiffusion,  as demonstrated in \cite{L22}. 
Thus,  the transition semigroup of $\widehat{W}^*$ is absolutely continuous with respect to $\widehat{\mu}$; see,  e.g.,  \cite{K75}.  It follows that $P^*_t(x,\cdot)=\mathbf{P}^*_x(X^*_t\in \cdot )$ is absolutely continuous with respect to $\mu$.  Particularly,  $P^*_t(x, \{\infty\})=0$ for any $x\in \bN\cup\{\infty\}$.  Hence,  it can be straightforwardly verified that $(p^*_{ij}(t))_{i,j\in \bN}$ is an honest and symmetrizable transition matrix. 

We need to prove that $(p^*_{ij}(t))$ is a $Q$-process.  Clearly, it is standard because the sample paths of $X^*$ are right continuous.  We now {show}
\begin{equation}\label{eq:23}
	\lim_{t\downarrow 0}\frac{p^*_{ij}(t)-\delta_{ij}}{t}=q_{ij},\quad i,j\in \bN.  
\end{equation}
Set $\eta_\infty:=\inf\{t>0: X^*_t=\infty\}$.  The stopped process of $X^*$ at $\eta_\infty$ is exactly the minimal $Q$-process $X^\text{min}$.  Consequently,  we have
\[
	\lim_{t\downarrow 0}\frac{\bP^*_i(X^*_t=j,  t<\eta_\infty)-\delta_{ij}}{t}=q_{ij}, \quad i,j\in \bN.
\]
To establish \eqref{eq:23},  it suffices to prove that
\[
	\lim_{t\downarrow 0} \frac{\bP^*_i(\eta_\infty\leq t)}{t}=0.  
\]
This proof follows a standard approach by utilizing the strong Markov property and the ``local" property of $X^*$ (see, e.g.,  \cite[\S5.2]{I06}).  We include the details for the convenience of {the readers}.

We will proceed by contradiction and assume that
\[
\limsup_{t\downarrow 0}\frac{\bP^*_i(\eta_\infty\leq t)}{t}>0.  
\]
Therefore,  there is a constant $c>0$ and a decreasing sequence $t_n\downarrow 0$ such that 
\begin{equation}\label{eq:25}
	\bP^*_i(\eta_\infty\leq t_n)\geq ct_n.  
\end{equation}
Let us define $Y_t:=X^*_t$ for $t<\eta_\infty$ and $Y_t:=\infty$ for $t\geq \eta_\infty$.  Now,  we will  proceed with several steps to arrive at a contradiction.

Firstly,  define $\Lambda_s:=\{\omega\in \Omega: Y_t(\omega)=i, 0\leq t\leq s\}$ for any $s>0$.  It is clear that $\Lambda_s$ is increasing as $s\downarrow 0$ and $\Omega\subset \cup_{s>0}\Lambda_s$,  $\bP^*_i$-a.s.  Hence, there exists $s>0$ such that
\begin{equation}\label{eq:24}
	\bP^*_i(\Lambda_s)>1/2.  
\end{equation}
Keeping this $s>0$ fixed, we proceed by defining,   for any $n\geq 1$,
\[
K_n:=\cup_{\{m\geq 1: mt_n\leq s\}}\{Y_{(m-1)t_n}=i,  Y_{mt_n}=\infty\}.
\]
It is evident that
\[
	K_n\subset \tilde{K}_n:=\{\omega\in \Omega: \exists r_1<r_2\leq s, \text{ s.t. }r_2-r_1\leq t_n,  Y_{r_1}(\omega)=i,  Y_{r_2}(\omega)=\infty\},
\]
and $\tilde{K}_n$ is decreasing as $n\uparrow \infty$.  Moreover, we have $\bP^*_i(\cap_{n\geq 1}\tilde{K}_n)=0$.  Consequently,
\begin{equation}\label{eq:26}
\bP^*_i(K_n)\rightarrow 0,\quad n\rightarrow \infty.
\end{equation}
Finally,  let us set 
\[
	K_{n,m}:=\{Y_{(m-1)t_n}=i,  Y_{mt_n}=\infty\}.
\]
In other word, $K_n=\cup_{\{m\geq 1: mt_n\leq s\}}K_{n,m}$.  Note that $K_{n,m}\cap K_{n,m'}=\emptyset$ for $m\neq m'$ and 
\[
	K_{n,m}\supset K'_{n,m}:=\{Y_t=i,  \forall 0\leq t\leq (m-1)t_n\text{ and }(m-1)t_n<\eta_\infty\leq mt_n\}.
\]
The strong Markov property of $X^*$ implies that
\[
	\bP^*_i(K'_{n,m})=\bP^*_i(\eta_\infty\leq t_n)\bP^*_i(Y_t=i, \forall 0\leq t\leq (m-1)t_n). 
\]
Using \eqref{eq:25},  \eqref{eq:24} and $(m-1)t_n\leq s$,  we have $\bP^*_i(K'_{n,m})\geq ct_n/2$.  Therefore,  it follows that
\[
\begin{aligned}
\bP^*_i(K_n)&=\sum_{\{m\geq 1: mt_n\leq s\}}\bP^*_i(K_{n,m})\geq \sum_{\{m\geq 1: mt_n\leq s\}}\bP^*_i(K'_{n,m}) \\
 &\geq  \sum_{\{m\geq 1: mt_n\leq s\}} \frac{ct_n}{2}>\frac{cs}{4},
\end{aligned}\]
which contradicts \eqref{eq:26}.  This completes the proof.  
\end{proof}
\begin{remark}
In view of \eqref{eq:33-2},  $X^*$ is a continuous-time Markov chain with the minimal state space $\bN$ and the transition matrix $(p^*_{ij}(t))_{i,j\in \bN}$.  The essential range of $X^*$ is {equal to} $\bN\cup \{\infty\}$,  and $\infty$ is a fictitious state.   
\end{remark}

\section{Ray-Knight compactification of birth and death processes}\label{SEC3}

From now on, we assume that $\infty$ is an exit or regular.  Let $X=(X_t)_{t\geq 0}$ be a (Borel measurable) $Q$-process realized on the sample space $(\Omega, \mathscr F,  \bP)$ with transition function $(p_{ij}(t))_{i,j\in\bN}$.  
If the initial distribution is the Dirac measure $\delta_i$ for some $i\in \bN$,  we write $\bP$ as $\mathbf{P}_i$ and the expectation with respect to $\bP_i$ as $\mathbf{E}_i$.  
The transition semigroup of $X$ is denoted by $P_t$,  i.e., $P_t(i, B)=P_t1_B(i)=\mathbf{P}_i(X_t\in B)$ for $i\in \bN$ and $B\subset \bN$.  Let $(R_\alpha)_{\alpha>0}$ be the resolvent of $P_t$,  i.e.,  for any $f\in \mathcal{N}^+_b$,  the family of all bounded and positive Borel functions on $\bN$,   
\[
	R_\alpha f(i):=\mathbf{E}_i\int_0^\infty e^{-\alpha t} f(X_t)dt=\int_0^\infty e^{-\alpha t} P_tf(x)dt.  
\]
Particularly, for $i,j\in \bN$,  $P_t(i,\{j\})=p_{ij}(t)$, and we write
\[
	R_\alpha(i,\{j\}):=\int_0^\infty e^{-\alpha t} p_{ij}(t)dt.
\]
Let $\eta_n:=\inf\{t>0: X_t=n\}$ and
\begin{equation}\label{eq:30}
	\eta:=\lim_{n\rightarrow \infty}\eta_n.
\end{equation}
Then $\mathbf{P}_i(\eta<\infty)=1$ for $i\in \bN$, and the killed process of $X$ at $\eta$ is {equal to} the minimal $Q$-process; see,  e.g.,  \cite[\S5.1]{WY92}.  
For $\alpha>0$,  define a function
\begin{equation}\label{eq:31}
	u_\alpha(i):=\mathbf{E}_i e^{-\alpha \eta},\quad i\in \bN  
\end{equation}
with the convention $u_\alpha(\partial)=0$.  The following lemma is simple but important.  

\begin{lemma}\label{LM31}
Assume that $\infty$ is regular or an exit.  Let $u_\alpha$ be defined as \eqref{eq:31}.  Then 
\[
\lim_{i\rightarrow \infty} u_\alpha(i)=1. 
\]
\end{lemma}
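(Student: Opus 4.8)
The plan is to exploit the probabilistic meaning of $\eta$ as the first flying time together with the time-changed Brownian motion picture from Section~\ref{SEC21}. Recall $u_\alpha(i)=\mathbf{E}_i e^{-\alpha\eta}$ and $\eta=\lim_n \eta_n$ where $\eta_n$ is the first hitting time of $n$. Since $\eta$ depends only on the behaviour of $X$ before its first flying time, it suffices to work with the minimal $Q$-process $X^{\mathrm{min}}$, equivalently with the quasidiffusion $\widehat{W}^{\mathrm{min}}$ on $\widehat{\bN}$, for which $\eta$ becomes the first hitting time of $c_\infty$ (equivalently the lifetime $\widehat{\zeta}^{\mathrm{min}}=A^{\mathrm{min}}_{\zeta^{W^{\mathrm{min}}}-}$). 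When $\infty$ is regular or an exit we have $c_\infty<\infty$, so this is a genuine finite boundary point of the underlying Brownian motion.

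First I would reduce the claim to a statement about $\widehat{W}^{\mathrm{min}}$: $\mathbf{E}_{c_k} e^{-\alpha \widehat{\zeta}^{\mathrm{min}}}\to 1$ as $k\to\infty$, i.e. as the starting point $c_k\uparrow c_\infty$. Equivalently, writing $\widehat{\eta}$ for the first hitting time of $c_\infty$ by $\widehat{W}^{\mathrm{min}}$, I must show $\widehat{\eta}\to 0$ in $\mathbf{P}_{c_k}$-probability (or, more strongly, that $\mathbf{E}_{c_k}\widehat{\eta}\to 0$, which by dominated convergence after noting $e^{-\alpha t}\ge 1-\alpha t$ immediately gives $u_\alpha(c_k)\ge 1-\alpha \mathbf{E}_{c_k}\widehat\eta\to 1$, while $u_\alpha\le 1$ trivially). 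The hitting time $\widehat\eta$ is obtained from the Brownian hitting time of $c_\infty$ via the time change by the PCAF $A^{\mathrm{min}}$, and $A^{\mathrm{min}}_t=\sum_{k\ge 0}\mu_k L^{\mathrm{min}}_t(c_k)$. The key point is that near $c_\infty$ the Brownian motion starting at $c_k$ reaches $c_\infty$ quickly: the expected Brownian hitting time of $c_\infty$ from $c_k$ is of order $c_\infty-c_k\to 0$, and during that short excursion the local time accumulated at each point $c_i$ is small, while the total $\mu$-mass that can be picked up is controlled by $\sigma=\sum_k (c_{k+1}-c_k)\sum_{i\le k}\mu_i<\infty$ (finite since $\infty$ is regular or an exit). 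A clean way: start $\widehat W^{\mathrm{min}}$ at $c_k$; by the strong Markov property and the scale/speed description, $\mathbf{E}_{c_k}\widehat\eta$ equals the speed-measure integral $\int$ of the Green function of the Brownian motion on $[c_0,c_\infty]$ killed at $c_\infty$ and reflected at $c_0$, restricted to $\widehat{\bN}\cap[c_k,c_\infty)$ (only the part of space between $c_k$ and $c_\infty$ contributes before hitting $c_\infty$), and this Green-function integral is dominated by a tail of the convergent sum defining $\sigma$, hence tends to $0$ as $k\to\infty$.

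Concretely I would write $\mathbf{E}_{c_k}\widehat\eta=\sum_{i\ge k}\mu_i\, g_k(c_i)$ where $g_k(c_i)=\mathbf{E}_{c_k}L^{\mathrm{min}}_{\sigma_{c_\infty}}(c_i)$ is the expected Brownian local time at $c_i$ before hitting $c_\infty$, starting from $c_k$; since before $\sigma_{c_\infty}$ the motion (reflected at $c_0$) stays in $[c_0,c_\infty)$ but one still has $g_k(c_i)\le 2(c_\infty - c_i)$ for $i\ge k$ (the expected local time at a point $x$ before exiting $[x,c_\infty]$ at $c_\infty$, started anywhere $\le x$... actually bounded by twice the harmonic-type factor), so $\mathbf{E}_{c_k}\widehat\eta\le 2\sum_{i\ge k}\mu_i (c_\infty-c_i)\le 2\sum_{i\ge k}\mu_i\sum_{j\ge i}(c_{j+1}-c_j)$, which is the tail of the double sum $\sum_{j}(c_{j+1}-c_j)\sum_{i\le j}\mu_i=\sigma<\infty$ after interchanging the order of summation. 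Hence $\mathbf{E}_{c_k}\widehat\eta\to 0$, giving the result. The main obstacle is getting the local-time bound $g_k(c_i)\le C(c_\infty-c_i)$ uniformly in $k$ with the right constant and making sure it integrates against $\mu$ to a tail of $\sigma$ rather than of the (possibly infinite) quantity $\lambda$; this is exactly where the distinction ``regular or exit'' (i.e. $\sigma<\infty$) versus ``entrance or natural'' enters, and one must be careful that the reflection at $c_0$ and the starting point $c_k$ being close to $c_\infty$ keep the relevant Green function supported essentially on $[c_k,c_\infty)$ so that only the tail of $\sigma$ appears. An alternative, if one prefers to avoid explicit local-time estimates, is to invoke that $\widehat W^{\mathrm{min}}$ has $c_\infty$ as a regular or exit boundary in Feller's sense for the quasidiffusion and cite the classical fact that $\mathbf{E}_x e^{-\alpha\sigma_{r}}\to 1$ as $x\uparrow r$ for such boundaries (e.g. from the theory in \cite{KW82} or \cite{K86}); but I would still sketch the estimate above to keep the argument self-contained.
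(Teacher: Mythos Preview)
Your probabilistic route via the time-changed Brownian motion is genuinely different from the paper's, which simply cites the analytic characterisation of $u_\alpha$ as the increasing solution of $\alpha u-Qu=0$ normalised by $u_\alpha(\infty)=1$ (Feller \cite[Theorem~7.1]{F59}, also \cite[\S7.3]{WY92} and \cite[Proposition~7.9]{L22}). Your approach has the merit of being self-contained within the quasidiffusion framework already set up in Section~\ref{SEC21}, and of making the role of $\sigma<\infty$ transparent; the paper's route is shorter but outsources the work.

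There is, however, a real slip in your execution. The assertion that ``only the part of space between $c_k$ and $c_\infty$ contributes before hitting $c_\infty$'' is false: the reflected Brownian motion started at $c_k$ can and does visit points $c_i$ with $i<k$ before reaching $c_\infty$. The correct Green function is $g_k(c_i)=2(c_\infty-c_k\vee c_i)$, so the full expectation is
\[
\mathbf{E}_{c_k}\widehat\zeta^{\mathrm{min}}=2(c_\infty-c_k)\sum_{i<k}\mu_i+2\sum_{i\ge k}\mu_i(c_\infty-c_i).
\]
Your tail estimate handles the second sum. For the first, write $c_\infty-c_k=\sum_{j\ge k}(c_{j+1}-c_j)$ and observe that for each such $j$ one has $\sum_{i<k}\mu_i\le\sum_{i\le j}\mu_i$; hence
\[
(c_\infty-c_k)\sum_{i<k}\mu_i\le\sum_{j\ge k}(c_{j+1}-c_j)\sum_{i\le j}\mu_i,
\]
again a tail of $\sigma$. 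With this correction both pieces are controlled by the same tail and the argument closes. Note in particular that when $\infty$ is an exit one has $\sum_{i<k}\mu_i\to\infty$, so omitting this term is not harmless; it is precisely the finiteness of $\sigma$ (not of $\mu(\bN)$) that saves you.
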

\begin{proof}
This fact can be demonstrated by,  e.g.,  revisiting the proof of \cite[Proposition~7.9]{L22}.  Furthermore,  one may also refer to \cite[Theorem~7.1]{F59} or \cite[\S7.3, Theorem~1~(ii)]{WY92}. 
\end{proof}

We are now ready to {discuss} the Ray-Knight compactification (see,  e.g.,  \cite{R59,  K65, CW05,  GR75}) of $X$.  {This approach,  which has been previously discussed for general continuous-time Markov chains in \cite[Chapter 9]{CW05},  will be applied with a slightly different  family of $1$-\emph{supermedian functions} as follows:}
\[
	G:=\{R_1 1_{\{i\}}:  i\in \bN\cup\{\partial\}\}\cup \{u_1\}.  
\]
According to \cite{K65} (also see \cite[Lemma~8.24]{CW05}),  there exists a unique minimal convex cone $\mathcal S(G)$,  referred to as the \emph{Ray cone},  such that
\begin{itemize}
\item[(i)] $1_{\bN\cup \{\partial\}}\in \cS(G)$,  $G\subset \cS(G)$;
\item[(ii)] $\cS(G)$ is separable in $\cN^+_b$ and separates points of $\bN\cup\{\partial\}$; 
\item[(iii)] $f\in \cS(G)$ implies that $R_\alpha f\in \cS(G)$ for $\alpha>0$;
\item[(iv)] If $f,g\in \cS(G)$,  then $f\wedge g\in \cS(G)$. 
\end{itemize}
Here, the separable property means that there exists a countable subset of $\cS(G)$ whose closure in $\cN_b^+$ with respect to the uniform norm contains $\cS(G)$.  
By utilizing this Ray cone,  we can apply the Ray-Knight compactification technique to obtain the following result.  The terminologies of Ray resolvent and Ray process are {referred to},  e.g.,  \cite{CW05,  GR75}. 

\begin{theorem}\label{THM32}
There exists a compact metric space $\bF$ and a Ray resolvent $(\bar{R}_\alpha)_{\alpha>0}$ on $\bF$ such that 
\begin{itemize}
\item[(1)] $\bN\cup \{\partial\}$ is a dense Borel subset of $\bF$;  
\item[(2)] Every $h\in \cS(G)$ can be extended to a unique function $\bar{h}$ in $C(\bF)$;  
\item[(3)] $\overline{\cS(G)}-\overline{\cS(G)}$ is dense in $C(\bF)$,  where $\overline{\cS(G)}=\{\bar{h}: h\in \cS(G)\}$;
\item[(4)] For each $i\in \bN\cup\{\partial\}$ and $A\subset \bN\cup\{\partial\}$,  we have $\bar{R}_\alpha(i,  A)=R_\alpha(i, A)$.  
\end{itemize}
Furthermore,  there exists a c\`adl\`ag Ray process $\bar{X}=(\bar{X}_t)_{t\geq 0}$,  realized on the sample space $(\Omega, \sF, \mathbf{P})$,  on $\bF$ such that 
\begin{itemize}
\item[(5)] $\bar{X}$ admits $(\bar{R}_\alpha)_{\alpha>0}$ as the resolvent;
\item[(6)] For a.e.  $t$,  $\mathbf{P}(\bar{X}_t=X_t)=1$.  
\end{itemize}
\end{theorem}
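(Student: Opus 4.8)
The plan is to build the space $\bF$ and resolvent $(\bar R_\alpha)$ by the abstract Ray--Knight machinery applied to the Ray cone $\cS(G)$, and then realize the process on the original probability space by composing with a suitable map. First I would recall the abstract construction (as in \cite{K65} or \cite[Chapter 8]{CW05}): since $\cS(G)$ is a separable convex cone of $1$-supermedian functions that is stable under $(R_\alpha)$ and under $\wedge$, contains the constants, and separates points of $\bN\cup\{\partial\}$, the uniform closure $\overline{\cS(G)}$ generates a commutative C${}^*$-algebra $\sA:=\overline{\overline{\cS(G)}-\overline{\cS(G)}}$ whose Gelfand spectrum is a compact metrizable space $\bF$; the natural map $\bN\cup\{\partial\}\to\bF$ is a Borel injection with dense image (injectivity because $\cS(G)$ separates points, density by maximality of the cone), giving (1). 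Each $h\in\cS(G)$ extends uniquely to $\bar h\in C(\bF)$ by the Gelfand transform, which is (2), and (3) is immediate since $\overline{\cS(G)}-\overline{\cS(G)}$ is by definition dense in $\sA=C(\bF)$. The resolvent operators $R_\alpha$, restricted to $\overline{\cS(G)}-\overline{\cS(G)}$, extend (again because of stability (iii) of the cone) to a family of positive contractions $\bar R_\alpha$ on $C(\bF)$ satisfying the resolvent equation; the Riesz representation then yields kernels $\bar R_\alpha(x,\cdot)$, and the Ray property (strong continuity / supermedian structure) follows from the corresponding properties of $R_\alpha$. Property (4) — that $\bar R_\alpha$ restricted to $\bN\cup\{\partial\}$ agrees with $R_\alpha$ — is built into the construction, since the extension of $R_\alpha h$ evaluated at the image of $i\in\bN\cup\{\partial\}$ equals $R_\alpha h(i)$ by continuity and the fact that the embedding intertwines evaluation.

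Next I would produce the c\`adl\`ag Ray process. Having the Ray resolvent $(\bar R_\alpha)$ on the compact metric space $\bF$, the standard existence theorem for Ray processes (\cite[Theorem 8.?]{CW05}, \cite{GR75}) gives, for each $x\in\bF$, a probability measure $\bP_x$ on the canonical c\`adl\`ag path space over $\bF$ under which the coordinate process is strongly Markov with resolvent $\bar R_\alpha$ and $\bP_x(\bar X_0=x)$ holds at non-branching $x$. To realize $\bar X$ on the \emph{original} space $(\Omega,\sF,\bP)$ rather than on canonical path space, I would use (4): the finite-dimensional distributions of $(\bar X_t)$ started from the initial law $\rho$ of $X$ (a probability on $\bN\subset\bN\cup\{\partial\}$) are determined by $\bar R_\alpha$ restricted to $\bN\cup\{\partial\}$, hence by $R_\alpha$, hence coincide with those of $X$ \emph{as measures on $\overline\bR\cup\{\partial\}$} for all finite tuples of times — up to the caveat below. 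Since $X$ is already defined on $(\Omega,\sF,\bP)$ and $\bar X$ can be taken to be a measurable functional of $X$ via the standard ``regularization'' argument (pass to a countable dense set of times, take right limits in the Ray topology, which exist $\bP$-a.s.\ because $t\mapsto \bar h(\bar X_t)$ has one-sided limits along rationals for each $h$ in a countable uniformly-dense subfamily — this is where the supermedian property of $\cS(G)$ is used), we obtain $\bar X$ on the same $(\Omega,\sF,\bP)$. This gives (5).

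For (6), the a.e.-in-$t$ agreement $\bP(\bar X_t=X_t)=1$, the argument is: for fixed $h=R_1 1_{\{j\}}$ and any $\alpha>0$, $\int_0^\infty e^{-\alpha t}\E[\bar h(\bar X_t)]\,dt=\bar R_\alpha \bar h$ evaluated at the initial point, which by (4) equals $R_\alpha h$ evaluated there, which equals $\int_0^\infty e^{-\alpha t}\E[h(X_t)]\,dt$; since this holds for all $\alpha$, uniqueness of Laplace transforms gives $\E[\bar h(\bar X_t)]=\E[h(X_t)]$ for a.e.\ $t$, and running over the countable separating family $\{R_1 1_{\{j\}}\}$ (plus $u_1$) and using that $\cS(G)$ separates points of $\bN\cup\{\partial\}$ forces $\bar X_t=X_t$ $\bP$-a.s.\ for a.e.\ $t$. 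One must also check that $\bar X_t$ actually lands in $\bN\cup\{\partial\}$ (not a genuinely new point of $\bF\setminus(\bN\cup\{\partial\})$) for a.e.\ $t$: this follows because $\bar X_t$ has law $P_t(\rho,\cdot)$ which is carried by $\bN\cup\{\partial\}$.

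The main obstacle I expect is the \emph{identification of the branching/non-branching structure and the topology of $\bF$} — specifically showing $\bF$ is exactly $\bN\cup\{\infty\}\cup\{\partial\}$ with the stated topology rather than something larger. The abstract machinery only guarantees $\bF\supseteq\bN\cup\{\partial\}$ densely; pinning down that the \emph{only} extra point is $\infty$ (with $\bN\cup\{\infty\}$ carrying the Alexandroff topology) requires the specific structure of birth--death chains and the key input of Lemma~\ref{LM31} ($u_1(i)\to 1$), which controls how the functions in $G$ behave as $i\to\infty$ and forces the $c_k\to\infty$ sequence of coordinate points to converge in the Ray topology to a single boundary point. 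This topological identification, together with the resulting analysis of whether that point is branching, is deferred in the paper to Lemma~\ref{LM33} and is genuinely the heart of the matter; the construction above should be presented as giving $\bF$ abstractly, with the concrete description extracted afterward.
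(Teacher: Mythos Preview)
The paper does not prove Theorem~\ref{THM32}; it is stated as a direct instance of the abstract Ray--Knight compactification theorem, with references to \cite{K65}, \cite{CW05}, \cite{GR75}. Your sketch of that machinery (Gelfand spectrum of the C${}^*$-algebra generated by $\cS(G)$, extension of $R_\alpha$ to a Ray resolvent on $\bF$, existence of the c\`adl\`ag Ray process, and supermartingale regularization of $X$ along rationals to realize $\bar X$ on the given $(\Omega,\sF,\bP)$) is the standard route and is essentially what the paper is citing. Your final paragraph is also on target: the identification $\bF=\bN\cup\{\infty\}\cup\{\partial\}$ is not part of Theorem~\ref{THM32} and is carried out separately in Lemma~\ref{LM33}, using Lemma~\ref{LM31} exactly as you anticipate.

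There is, however, a genuine gap in your argument for (6). From the Laplace transform identity you only obtain $\E[\bar h(\bar X_t)]=\E[h(X_t)]$ for a.e.\ $t$; equality of expectations over a separating family yields equality of \emph{marginal laws}, not $\bar X_t=X_t$ $\bP$-a.s. Two processes on the same probability space with identical one-dimensional distributions need not agree pointwise. The correct mechanism uses the way $\bar X$ was built: since $\bar X_t=\lim_{s\downarrow t,\,s\in D}X_s$ in the Ray topology, and for each $h\in\cS(G)$ the process $e^{-t}h(X_t)$ is a positive supermartingale, its sample paths have at most countably many times of right-discontinuity. Hence for a.e.\ $t$ one has $h(\bar X_t)=\lim_{s\downarrow t}h(X_s)=h(X_t)$ $\bP$-a.s., simultaneously for a countable separating subfamily of $\cS(G)$, which gives $\bar X_t=X_t$ $\bP$-a.s.\ for a.e.\ $t$. (Equivalently, one invokes the standard statement in \cite[Chapter~9]{CW05} that the Ray--Knight regularization is a modification of the original process in this a.e.-$t$ sense.) Your resolvent/Laplace computation is a useful consistency check but does not by itself close the argument.
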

\begin{remark}
The enlarged metric space $\bF$ and Ray resolvent $\bar{R}_\alpha$ depend only on the choice of $G$.  They {do not depend on} the initial distribution of $X$.  In the context of general continuous-time Markov chains, $\bar{X}$ is not necessarily a modification of $X$ and particularly,  $\bar{X}$ may have a different initial distribution from $X$.  Fortunately, this is not the case we encounter; see Lemma~\ref{LM33}~(1). 
\end{remark}

According to \cite{CW05},  $\bar{X}$ is referred to as the \emph{Ray-Knight compactification} of $X$.  The set of \emph{non-branching points} for this Ray process is defined as
\[
	D:=\{x\in \bF: \alpha \bar{R}_\alpha f(x)\rightarrow f(x),  \alpha\uparrow \infty,  \forall f\in C(\bF)\}.  
\]
The set $B:=\bF\setminus D$ is called the set of \emph{branching points}.  
Let $(\bar{P}_t)_{t\geq 0}$ be the transition semigroup of $\bar{X}$,  which is called a \emph{Ray semigroup}.  It is important to note that $\bar{P}_0(x,  \cdot)=\delta_x$ does not hold for $x\in B$, but it holds for $x\in D$. 

The following lemma is crucial to our subsequent considerations.  We want to emphasize that the identity in the fourth assertion refers not only to the equality of set elements but also to the equality of topologies.  

\begin{lemma}\label{LM33}
\begin{itemize}
\item[(1)] For any $t\geq 0$,  $\mathbf{P}(\bar X_t=X_t)=1$.  
\item[(2)] $\bN\cup\{\partial\}\subset D$.  
\item[(3)] $\mathbf{P}$-a.s.,  $\bar{X}_t=X_t$ for a.e.  $t$.  
\item[(4)] $\bF=\bN\cup \{\infty\}\cup \{\partial\}$.  
\end{itemize}
\end{lemma}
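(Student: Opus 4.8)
The plan is to establish the four assertions more or less in the order stated, each feeding into the next. For (1), I would use assertion (6) of Theorem~\ref{THM32} together with the Ray resolvent identity $\bar R_\alpha(i,A)=R_\alpha(i,A)$ from part (4) of that theorem. The point is that for a right-continuous (c\`adl\`ag) process, the finite-dimensional distributions — in particular the one-dimensional ones $\mathbf{P}(\bar X_t\in\cdot)$ — are determined by the resolvent, and the map $t\mapsto \mathbf{P}(\bar X_t=j)$ is right-continuous; since it agrees with $p_{ij}(t)=\mathbf{P}(X_t=j)$ for a.e.\ $t$ and the latter is continuous (standard transition matrix), the two agree for \emph{every} $t$. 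I would phrase this via Laplace transforms: $\int_0^\infty e^{-\alpha t}\mathbf P(\bar X_t=j)\,dt=\bar R_\alpha(i,\{j\})=R_\alpha(i,\{j\})=\int_0^\infty e^{-\alpha t}p_{ij}(t)\,dt$ for all $\alpha>0$, hence by uniqueness of Laplace transforms and right-continuity of both integrands, $\mathbf P(\bar X_t=j)=p_{ij}(t)$ for all $t$. Summing over $j\in\bN\cup\{\partial\}$ (honesty) shows no mass escapes to $\bF\setminus(\bN\cup\{\partial\})$, which gives (1) and simultaneously shows $\bar X_t\in\bN\cup\{\partial\}$ a.s.\ for each fixed $t$.

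For (2), I would show each $i\in\bN\cup\{\partial\}$ is a non-branching point by checking $\alpha\bar R_\alpha f(i)\to f(i)$ as $\alpha\uparrow\infty$ for all $f\in C(\bF)$. Since $\overline{\cS(G)}-\overline{\cS(G)}$ is dense in $C(\bF)$ by Theorem~\ref{THM32}(3), it suffices to verify this on $h\in\cS(G)$, and by a standard $3\varepsilon$ argument (using $\|\alpha\bar R_\alpha\|\le 1$) even on the generators $R_1 1_{\{j\}}$ and $u_1$. For $R_\alpha(i,\cdot)=R_\alpha|_{\bN\cup\{\partial\}}$ this is just the elementary fact that $\alpha R_\alpha g(i)\to g(i)$ for the discrete resolvent of a standard chain (continuity of $p_{ij}(t)$ at $t=0$); for $u_1$ one uses $u_\alpha(i)=\mathbf E_i e^{-\alpha\eta}$ and $\eta>0$ $\mathbf P_i$-a.s., so $\alpha\bar R_\alpha u_1(i)=\alpha\mathbf E_i\int_0^\infty e^{-\alpha t}u_1(X_t)\,dt\to u_1(i)$ (note $u_1$ is already bounded, $1$-excessive — actually $1$-supermedian — so $\alpha R_\alpha u_1\uparrow u_1$ is immediate from supermedianness). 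This gives $\bN\cup\{\partial\}\subset D$. Assertion (3) then follows from (1) by Fubini: $\int_0^\infty e^{-t}\mathbf E\big[\mathbf 1_{\{\bar X_t\ne X_t\}}\big]\,dt=\int_0^\infty e^{-t}\mathbf P(\bar X_t\ne X_t)\,dt=0$, so $\mathbf 1_{\{\bar X_t\ne X_t\}}=0$ for a.e.\ $t$, $\mathbf P$-a.s.

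The main work is assertion (4): identifying $\bF$ with $\bN\cup\{\infty\}\cup\{\partial\}$ as topological spaces. From (1) and (3), $\bar X$ takes values in $\overline{\bN\cup\{\partial\}}$ (closure in $\bF$) and, being c\`adl\`ag, its left limits $\bar X_{t-}$ exist in $\bF$; since $\bN\cup\{\partial\}$ is discrete in its own topology, any point of $\bF\setminus(\bN\cup\{\partial\})$ can only arise as a limit of a sequence $i_n\in\bN$ with $i_n\to\infty$ in $\bN\cup\{\infty\}$. So the heart of the matter is (i) $\bN\cup\{\partial\}$ is dense (given: Theorem~\ref{THM32}(1)) and \emph{discrete} as a subspace of $\bF$ — which I would get from the fact that $\cS(G)$ separates points and contains $R_1 1_{\{i\}}$, whose extensions $\overline{R_1 1_{\{i\}}}$ are continuous on $\bF$ and, restricted to $\bN\cup\{\partial\}$, already separate and isolate each point (using that the minimal chain's resolvent kernel $R_1(i,\{j\})$ as a function of $i$ distinguishes points and is "localized" enough); and (ii) there is exactly one extra point, and its neighborhood filter in $\bF$ matches the Alexandroff filter at $\infty$. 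For the "exactly one extra point" claim I would use the function $u_1$: its continuous extension $\bar u_1$ satisfies $\bar u_1(x)=1$ precisely at the boundary point(s) by Lemma~\ref{LM31} ($u_\alpha(i)\to1$ as $i\to\infty$), while $u_1(i)<1$ on $\bN$ and $u_1(\partial)=0$; combined with point-separation of $\cS(G)$ this forces the boundary to be a single point, call it $\infty$, with $\overline{R_11_{\{i\}}}(\infty)=\lim_{j\to\infty}R_1(j,\{i\})$ and $\bar u_1(\infty)=1$. Finally, to get equality of topologies I would show a basic neighborhood of $\infty$ in $\bF$ is $\{\infty\}\cup\{i\in\bN: i\ge N\}$ for large $N$, i.e.\ cofinite in $\bN$: if not, some subsequence $i_{n_k}$ of a tail would stay away from $\infty$, contradicting that $\bF$ is compact metric and $\bN\cup\{\infty\}$ is its only candidate limit structure — more cleanly, one checks the embedding $\bN\cup\{\partial\}\hookrightarrow\bF$ extends to a continuous bijection $\bN\cup\{\infty\}\cup\{\partial\}\to\bF$ from a compact space to a Hausdorff space, hence a homeomorphism. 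The delicate step, and the one I expect to be the main obstacle, is verifying that \emph{no} branching occurs at interior points and that the Ray cone $\cS(G)$ — built from the fairly sparse generating set $G$ — is genuinely rich enough both to separate all points of $\bN\cup\{\partial\}$ and to pin the compactification down to the Alexandroff one rather than some larger space; this is where the specific choice of $G$ (adding $u_1$ to the resolvent kernels) and Lemma~\ref{LM31} are essential.
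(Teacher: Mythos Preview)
Your argument for (1) has a genuine gap: the Laplace-transform computation only yields $\mathbf P(\bar X_t=j)=p_{ij}(t)=\mathbf P(X_t=j)$ for every $t$ and $j$, i.e.\ equality of one-dimensional \emph{marginals}. But $\bar X$ and $X$ are realized on the \emph{same} probability space $(\Omega,\sF,\mathbf P)$, and equal marginals do not imply $\bar X_t=X_t$ almost surely. What is missing is a way to upgrade the a.e.-$t$ pointwise equality in Theorem~\ref{THM32}(6) to every $t$, and for this one must use some right-continuity of $X$ itself, which is only assumed Borel measurable. The paper does this as follows: fix $t$; the standardness of the transition matrix gives the stochastic right-continuity $\mathbf P(X_{t+h}=X_t)\to 1$ as $h\downarrow 0$; choose $h_n\downarrow 0$ in the full-measure set from (6) with additionally $\mathbf P(X_{t+h_n}\neq X_t)<2^{-n}$; apply Borel--Cantelli; and then use right-continuity of $\bar X$ to conclude $\bar X_t=\lim_n\bar X_{t+h_n}=\lim_n X_{t+h_n}=X_t$ a.s.

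Your argument for (4) has a gap in the ``exactly one extra point'' step. Knowing that $\bar u_1\equiv 1$ on $\bF\setminus(\bN\cup\{\partial\})$ together with point-separation of $\cS(G)$ does \emph{not} force the boundary to be a single point: two boundary points $\xi\neq\xi'$ could be separated by some other $\bar f\in\overline{\cS(G)}$, and nothing in your outline shows that $\lim_{n\to\infty} f(n)$ exists along the \emph{full} sequence for such $f$ (which is what would rule out distinct subsequential limits). The paper supplies this missing ingredient probabilistically: since $\bar X$ is c\`adl\`ag, $\bar X_{\eta-}\in\bF$ exists; by (3) one has $\bar f(\bar X_{\eta-})=\lim_{t\uparrow\eta}f(X_t)$, and because the chain must pass through every state $n$ before $\eta$, this limit equals $\lim_{n\to\infty}f(n)$. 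Hence $\lim_n f(n)$ exists for every $f$ in a countable separating family $\cS_0\subset\cS(G)$, which determines a unique $\xi\in\bF$ with $\bar f(\xi)=\lim_n f(n)$; any other accumulation point $\xi'$ of $\bN$ in $\bF$ then satisfies $\bar f(\xi')=\bar f(\xi)$ for all $f\in\cS_0$ and so coincides with $\xi$. Your approach to (2) via the resolvent characterization is a legitimate alternative to the paper's portmanteau argument (using $\bar P_t(i,\cdot)\Rightarrow\bar P_0(i,\cdot)$ and closedness of $\{i\}$), and (3) is the same Fubini step.
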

\begin{proof}
\begin{itemize}
\item[(1)] Fix $t\geq 0$.  We first note that 
\begin{equation}\label{eq:43-2}
	\lim_{h\downarrow 0}\bP(X_{t+h}=X_t)=\lim_{h\downarrow 0}\sum_{i, k\in \bN\cup\{\partial\}} \bP(X_0=i)p_{ik}(t)p_{kk}(h)=1.  
\end{equation}
Using \eqref{eq:43-2} and Theorem~\ref{THM32}~(6),  
one can choose a decreasing sequence $h_n\downarrow 0$ such that $\bP(\bar{X}_{t+h_n}=X_{t+h_n})=1$ and $\bP(X_{t+h_n}=X_t)>1-1/2^n$.     Let $A_n:=\{X_{t+h_n}\neq X_t\}$ and $N:=\cap_{k\geq 1}\cup_{n\geq k} A_n$.  It is clear that $\bP(N)=0$.   For $\bP$-a.s. $\omega\notin N$,  there exists $k\geq 1$ such that for any $n\geq k$, 
\[
	\bar X_{t+h_n}(\omega)=X_{t+h_n}(\omega)=X_t(\omega).
\]  
Since $\bar{X}$ is c\`adl\`ag,  it follows that $\bP(X_t=\bar{X}_t)=1$. 
\item[(2)]  Fix $i\in \bN\cup\{\partial\}$.  Theorem~\ref{THM32}~(4) yields that $\bar{P}_t(i,\{i\})=p_{ii}(t)$ for a.e.  $t$.  Since the function $t \mapsto \bar{P}_t f(i)$ is right continuous for $f\in C(\bF)$ (see, e.g.,  \cite[Theorem~8.2]{CW05}),  it follows that $\bar{P}_t(i,\cdot)$ converges weakly to $\bar{P}_0(i,\cdot)$ on $\bF$ as $t\downarrow 0$.   Take a sequence $t_n\downarrow 0$ such that $\bar{P}_{t_n}(i,\{i\})=p_{ii}(t_n)$.  Note that $\{i\}$ is closed in $\bF$.  We have
\[
	\bar{P}_0(i,\{i\})\geq \limsup_{n\rightarrow \infty}\bar{P}_{t_n}(i,\{i\})=\lim_{n\rightarrow \infty} p_{ii}(t_n)=1.  
\]
Therefore,  $i\in D$. 
\item[(3)] By applying Theorem~\ref{THM32}~(6) and the Fubini theorem, we obtain
\[
	\mathbf{E}\int_0^\infty 1_{\{\bar{X}_t\neq X_t\}}dt=\int_0^\infty \mathbf{P}(\bar{X}_t\neq X_t)dt=0.  
\]
Thus,  the desired assertion follows.  
\item[(4)] Consider a countable subset $\cS_0\subset \cS(G)$ with $u_1\in \cS_0$, such that $\bar{\cS}_0:=\{\bar{f}: f\in \cS_0\}$ separates points of $\bF$,  where $\bar{f}\in C(\bF)$ denotes the extension of $f$.  Note that $\mathbf{P}(\eta<\infty)=1$.  Since $\bar{X}$ is a c\`adl\`ag process on $\bF$,  it follows that for $\mathbf{P}$-a.s.  $\omega\in \Omega$,  the limit 
\[
	\lim_{t\uparrow \eta} \bar{f}(\bar{X}_t)=\bar{f}(\bar{X}_{\eta-}),\quad \forall f\in \cS_0
\]
exists.    The previous assertion implies that
\begin{equation}\label{eq:32}
\bar{f}(\bar{X}_{\eta-})=\lim_{t\rightarrow \eta} f(X_t).
\end{equation}
Note that $X$ must pass through $n$ before reaching $n+1$.  Hence, the right-hand side of \eqref{eq:32} is also equal to $\lim_{n\rightarrow \infty} f(n)$.  This implies that $\bar{f}(\bar{X}_{\eta-})=\lim_{n\rightarrow \infty} f(n)$ for $f\in \cS_0$.  Thus,  $\bar{f}(\bar{X}_{\eta-})$ is independent of $\omega \in \Omega$.  Since $\bar{\cS}_0$ separates points of $\bF$,  there exists an element $\xi\in \bF$ such that $\bar{X}_{\eta_-}=\xi$,  $\mathbf{P}$-a.s.  In particular, 
\[
	\bar{f}(\xi)=\lim_{n\rightarrow \infty} f(n)=\lim_{n\rightarrow \infty} \bar{f}(n),\quad f\in \cS_0.  
\]

Next, we note that $\xi\notin \bN\cup\{\partial\}$.  This is because $\bar{u}_1(i)=u_1(i)$ for any $i\in \bN\cup\{\partial\}$, while,  in view of Lemma~\ref{LM31}, $\bar{u}_1(\xi)=\lim_{n\rightarrow \infty} u_1(n)>\bar{u}_1(i)$.  

Finally, we show that $\bF=\bN\cup\{\partial\}\cup \{\xi\}$ and $n\rightarrow \xi$ in $\bF$ as $n\rightarrow \infty$.  (This implies that $\xi$ is {equal to} the Alexandroff compactification point of $\bN$.  Therefore,  $\xi=\infty$ and $\bF=\bN\cup \{\infty\}\cup\{\partial\}$.) Now, we argue by contradiction and suppose that $\xi'\in \bF\setminus \left(\bN\cup\{\partial\}\right)$ and $\xi'\neq \xi$.  According to Theorem~\ref{THM32}~(1),  there exists a sequence $\{n_k:k\geq 1\}\subset \bN$ such that $n_k\rightarrow \xi'$ in $\bF$ as $k\rightarrow \infty$.  Hence, we have
\[
	\bar{f}(\xi')=\lim_{k\rightarrow \infty} \bar{f}(n_k)=\bar{f}(\xi),\quad \bar{f}\in \cS_0.  
\]
Since $\cS_0$ separates points of $\bF$,  we must have $\xi'=\xi$, which leads to a contradiction.  Similarly, we can also conclude that $n\rightarrow \xi$ in $\bF$ as $n\rightarrow \infty$.  
\end{itemize}
This completes the proof. 
\end{proof}
\begin{remark}
If we {do not include} $u_1$ in $G$ and obtain the Ray-Knight compactification in a similar manner, it is possible that $\xi \in \mathbb{N} \cup \{\partial\}$ in the proof of the fourth assertion (specifically, when $X$ is a Doob process with an entering distribution represented by a point measure at $\xi$).  {In this particular case}, $\mathbf{F} = \mathbb{N} \cup \{\partial\}$ (although their topologies are not the same) and $n \rightarrow \xi$ as $n \rightarrow \infty$ in terms of the topology of $\mathbf{F}$.  More details regarding this possibility can be found in \cite[Remark~9.13]{CW05}.
\end{remark}

The first assertion in this lemma indicates that $\bar{X}$ is also a $Q$-process.  Referring to the terminology in Section~\ref{SEC22}, $\bar X$ is called a Ray $Q$-process. 
Furthermore,  we can easily obtain the following corollary,  {since a Ray process possesses the strong Markov property. }

\begin{corollary}
Every $Q$-process admits a {c\`adl\`ag} modification on $\bN\cup\{\infty\}\cup\{\partial\}$ {satisfying} the strong Markov property. 
\end{corollary}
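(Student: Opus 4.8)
The statement is essentially a packaging of Theorem~\ref{THM32} and Lemma~\ref{LM33}, together with a trivial remark covering the boundary types excluded at the start of Section~\ref{SEC3}. I would split the argument according to Feller's classification of $\infty$. If $\infty$ is {an entrance} or natural, then by Theorem~\ref{THM25}~(2) the $Q$-process is unique, hence equal to the minimal $Q$-process $X^{\text{min}}$. By Lemma~\ref{LM21} and the remark following it, $X^{\text{min}}$ is a Hunt process on $\bN\cup\{\partial\}$ (with $\partial$ identified with the Alexandroff point, so that $\bN\cup\{\partial\}$ is precisely $\bN\cup\{\infty\}\cup\{\partial\}$ after identifying the two traps); in particular it is c\`adl\`ag and strongly Markovian, so the claim holds in this case with nothing to prove.

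For the remaining (and main) case, assume $\infty$ is regular or an exit, and let $X$ be any $Q$-process. First I would pass to a Borel measurable realization of $X$ on some $(\Omega,\sF,\bP)$: this is always possible via Doob's separable modification cited in Section~\ref{heuristics}, and it does not change the transition matrix, hence $X$ remains a $Q$-process. Now apply Theorem~\ref{THM32} to obtain the compact metric space $\bF$, the Ray resolvent $(\bar R_\alpha)_{\alpha>0}$, and the c\`adl\`ag Ray process $\bar X=(\bar X_t)_{t\geq 0}$ on $\bF$ realized on the same $(\Omega,\sF,\bP)$. By Lemma~\ref{LM33}~(4) we have $\bF=\bN\cup\{\infty\}\cup\{\partial\}$, so $\bar X$ lives on the required state space. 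By Lemma~\ref{LM33}~(1), $\bP(\bar X_t=X_t)=1$ for every $t\geq 0$, i.e.\ $\bar X$ is a modification of $X$ (and in particular has the same transition matrix, hence is itself a $Q$-process). Finally, every Ray process enjoys the strong Markov property --- this is a standard feature of the Ray--Knight compactification, see e.g.\ \cite{CW05,GR75,RW00} --- so $\bar X$ is a c\`adl\`ag, strongly Markovian modification of $X$ on $\bN\cup\{\infty\}\cup\{\partial\}$.

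I do not expect any genuine obstacle: all the substantive work has already been done in establishing Theorem~\ref{THM32} and Lemma~\ref{LM33}. The only two points requiring a sentence of care are (i) reducing an arbitrary $Q$-process to a Borel measurable one so that Theorem~\ref{THM32} is applicable, and (ii) observing that passing to a modification preserves the property of being a $Q$-process, so that the two cases of the classification together exhaust all $Q$-processes.
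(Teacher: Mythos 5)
Your proposal is correct and follows essentially the same route as the paper: the corollary is obtained directly from Theorem~\ref{THM32} and Lemma~\ref{LM33} together with the fact that a Ray process has the strong Markov property. Your extra case split for $\infty$ being an entrance or natural is harmless bookkeeping (the paper works under the standing assumption that $\infty$ is regular or an exit, the unique-process case being trivial), so there is no substantive difference.
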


The Ray-Knight compactification is helpful to understanding a $Q$-process and particularly to clarifying certain intricate concepts.  To illustrate this,  we will focus on an important notion of \emph{flying times}.  A time $t$ is referred to as a \emph{jumping time} (for $X_\cdot(\omega)$) if there exists a constant $\varepsilon>0$ and two distinct states $i,j\in \bN$ such that $X_s(\omega)=i$ for $s\in (t-\varepsilon,t)$ and $X_t(\omega)=j$.  The set of flying times $L(\omega)$ for $\omega\in \Omega$ is defined as
\[
	L(\omega):= \{t\in (0,\infty): \forall \varepsilon>0,  [t-\varepsilon, t]\text{ contains {infinitely many} jumping times for }X_\cdot(\omega)\}.  
\]
This definition is attributed to \cite[\S6.1]{WY92}.  
Recall that $\bar{X}$ is c\`adl\`ag on $\bF$,  which implies that for $\mathbf{P}$-a.s.  $\omega\in \Omega$,  $\bar{X}_{t-}$ exists in $\bF$.  By using this modification,   the set of \emph{flying times} of $X$ can be expressed as the set of times $t$ for which $\bar{X}_{t-}=\infty$. 

\begin{corollary}\label{COR35}
For $\mathbf{P}$-a.s.  $\omega\in \Omega$,
\[
	L(\omega)=\{t\in (0,\infty): \bar{X}_{t-}(\omega)=\infty\}.  
\]
\end{corollary}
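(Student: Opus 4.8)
The plan is to prove the two inclusions separately, exploiting the c\`adl\`ag property of $\bar X$ on $\bF=\bN\cup\{\infty\}\cup\{\partial\}$ together with Lemma~\ref{LM33}~(3), which gives a $\bP$-null set $N$ outside of which $\bar X_t(\omega)=X_t(\omega)$ for a.e.\ $t$, and also a.s.\ fixes the existence of the left limits $\bar X_{t-}$. Throughout I work with a fixed $\omega$ outside the relevant null set. First I would establish the inclusion $\{t:\bar X_{t-}(\omega)=\infty\}\subset L(\omega)$. Suppose $\bar X_{t-}=\infty$. Since $\bF$ is metrizable and $n\to\infty$ in $\bF$, the left limit $\bar X_{t-}=\infty$ means that $\bar X_s$ is close to $\infty$ for $s$ slightly less than $t$; combined with $\bar X_s=X_s$ for a.e.\ such $s$, we find that $X_s$ takes arbitrarily large values in $\bN$ on every left interval $(t-\varepsilon,t)$. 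Because the minimal $Q$-process (equivalently, $X$ run up to a flying time) moves on $\bN$ by unit steps in the sense that it must pass through $n$ before reaching $n+1$ (the fact used already in the proof of Lemma~\ref{LM33}~(4)), oscillating to arbitrarily large values on an arbitrarily short interval forces infinitely many jumping times in $[t-\varepsilon,t]$ for each $\varepsilon>0$; hence $t\in L(\omega)$.

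Next I would prove the reverse inclusion $L(\omega)\subset\{t:\bar X_{t-}(\omega)=\infty\}$. Let $t\in L(\omega)$, so for every $\varepsilon>0$ the interval $[t-\varepsilon,t]$ contains infinitely many jumping times of $X_\cdot(\omega)$. Since $\bar X$ is c\`adl\`ag on the compact metric space $\bF$, the left limit $\xi:=\bar X_{t-}(\omega)$ exists in $\bF$; I must show $\xi=\infty$. If $\xi\in\bN$, then $\bar X_s\to\xi$ as $s\uparrow t$, so $\bar X_s$ is eventually equal to the isolated point $\xi$ on a left neighbourhood of $t$, and since $\bar X_s=X_s$ for a.e.\ $s$ there, the set of jumping times of $X_\cdot(\omega)$ in $(t-\varepsilon,t)$ is finite for small $\varepsilon$ --- contradicting $t\in L(\omega)$. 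If $\xi=\partial$, the same argument applies: $X_s=\partial$ eventually on a left neighbourhood, so no jumping times accumulate at $t$ from the left, again a contradiction (and once $X$ reaches $\partial$ it stays, so this case is in fact vacuous for $t$ a putative accumulation point of jumps). The only remaining possibility in $\bF=\bN\cup\{\infty\}\cup\{\partial\}$ is $\xi=\infty$, which is what we wanted.

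The main obstacle I anticipate is the careful bookkeeping in the first inclusion: translating ``$\bar X_{t-}=\infty$'' into a statement about the \emph{original} process $X$ having infinitely many jumping times on every left interval. This requires combining three ingredients --- the topological fact that a neighbourhood of $\infty$ in $\bF$ is a set of the form $\{n:n\ge N\}\cup\{\infty\}$ (from Lemma~\ref{LM33}~(4)), the a.e.-coincidence $\bar X_s=X_s$ from Lemma~\ref{LM33}~(3), and the unit-step structure of $X$ on $\bN$ --- and one must be slightly cautious because the coincidence is only for a.e.\ $s$, so one argues that on a left interval $(t-\varepsilon,t)$ the values $X_s$ (for the co-null set of good $s$) are eventually $\ge N$ for any prescribed $N$, yet $X$ also returns below $N$ infinitely often (else it would converge in $\bN$, contradicting $\bar X_{t-}=\infty$), and each such descent-and-ascent through level $N$ produces jumping times, infinitely many as $\varepsilon\downarrow 0$ forces larger and larger $N$. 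Once this is set up, the argument is routine. The paper's own earlier invocation of ``$X$ must pass through $n$ before reaching $n+1$'' suggests this is exactly the intended mechanism, so I would cite that and keep the write-up brief.
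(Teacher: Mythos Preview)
Your two-inclusion strategy, using the c\`adl\`ag property of $\bar X$ and Lemma~\ref{LM33}~(3), is exactly the paper's approach. However, there is a real gap in your reverse inclusion $L(\omega)\subset\{t:\bar X_{t-}=\infty\}$. Having established that $X_s=\xi$ for a.e.\ $s\in(t-\varepsilon,t)$ when $\xi\in\bN$, you assert this forces only finitely many jumping times. But a.e.\ equality alone does not exclude the possibility that $X$ jumps away from $\xi$ at a sequence of isolated times $r_k$ (with $X_{r_k}=j_k\neq\xi$), each $r_k$ being a bona fide jumping time since $X\equiv\xi$ on a left interval of $r_k$; nothing in your argument prevents infinitely many such $r_k$. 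The paper closes this gap by citing \cite[II \S7, Theorem~4]{C67}: since $\xi$ is a \emph{stable} state ($q_\xi<\infty$) and $X$ is taken in Doob's canonical version, $X_s=\xi$ for a.e.\ $s\in[t-\varepsilon,t)$ can be upgraded to $X_s=\xi$ for \emph{all} $s\in[t-\varepsilon,t)$, which immediately rules out any jumping times in that interval and gives the contradiction with $t\in L(\omega)$.

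A secondary remark: your forward inclusion is correct in spirit but the ``descent-and-ascent'' picture is off. If $\bar X_{t-}=\infty$ then, via Lemma~\ref{LM33}~(3), one finds $t_n\uparrow t$ with $\bN\ni X_{t_n}\to\infty$; there is no descent, only ascent. The point is simply that $X$ takes arbitrarily large (hence infinitely many distinct) integer values in every left interval $(t-\varepsilon,t)$, which already forces infinitely many jumps there. The paper records the sequence $X_{t_n}\to\infty$ and then writes ``Clearly, $t$ is a flying time''---no oscillation argument is needed.
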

\begin{proof}
If $\bar{X}_{t-}(\omega)=\infty$,  Lemma~\ref{LM33}~(3) implies that $\bN\ni X_{t_n}(\omega)\rightarrow \infty$ for a certain sequence $t_n\uparrow t$.  Clearly, $t$ is a flying time.  Conversely,  suppose $t\in L(\omega)$.  By way of contradiction, assume that $\bar{X}_{t-}(\omega)=j\in \bN\cup\{\partial\}$.  Then, it follows from Lemma~\ref{LM33}~(3) that there exists a constant $\varepsilon>0$ such that $X_{s}(\omega)=j$ for a.e.  $s\in [t-\varepsilon, t)$.  {Since $j$ is a stable state,  meaning that  $q_j<\infty$ where $q_j$ is located on the diagonal of $Q$, } we can apply \cite[II \S7, Theorem~4]{C67} to conclude that $X_s(\omega)=j$ for all $s\in [t-\varepsilon,t)$.  However, this contradicts the assumption that $t\in L(\omega)$.  Therefore,  the proof is complete.   
\end{proof}



\section{Doob processes}\label{SEC4}

The notion of \emph{Doob process} is {referred to},  e.g.,  \cite[\S2.3]{WY92}.  It is a well-known $Q$-process that returns to $\bN\cup \{\partial\}$ through non-local jumps at each flying time until reaching the trap $\partial$.  This process is uniquely determined by its returning distribution,  which is a specific probability measure $\pi$ on $\bN\cup \{\partial\}$.  For precision,  we also refer to it as the \emph{$(Q, \pi)$-Doob process}.  In particular,  when $\pi=\delta_{\partial}$,  the Dirac measure at $\partial$,  the $(Q,\pi)$-Doob process reduces to the minimal $Q$-process. 

{The main result of this section will demonstrate that a $Q$-process is a Doob process if and only if $\infty$ is {a branching point} for its Ray-Knight compactification. Furthermore, another goal is to provide a probabilistic construction of Doob processes using the celebrated {Ikeda-Nagasawa-Watanabe piecing out procedure} as reviewed in Appendix \ref{APPA}.
To explain the space $\mathbb{N}\cup \{\partial\}$ for the piecing out  method, we note that this construction begins with the minimal $Q$-process $X^{\text{min}}$. According to \cite{INW66}, the trap point $\partial$ for $X^{\text{min}}$ (as well as for the process obtained by piecing out) should be viewed as the Alexandroff compactification point of $\mathbb{N}$. While this viewpoint is inconsistent with our default notation, it should not cause misunderstanding since we consider processes with the same transition matrix to represent the same $Q$-process.


Let $X=(X_t)_{t\geq 0}$ be a $Q$-process whose Ray-Knight compactifiction on $\bF=\bN\cup \{\infty\}\cup\{\partial\}$ is denoted by $\bar{X}=(\bar{X}_t)_{t\geq 0}$.  The other notations concerning,  e.g.,  the transition semigroups,  the resolvents,  the branching or non-branching points,   remain the same as in Section~\ref{SEC3}.  Note that  if $\infty\in B$,  then $\bar P_0(\infty, \cdot)$ is a probability measure on $\bN\cup\{\partial\}$ (not on $\bF$!); see,  e.g.,  \cite[Proposition~8.8~(i)]{CW05}. }



\begin{theorem}\label{THM41}
Let $\pi$ be a probability measure on $\bN\cup\{\partial\}$.  The following are equivalent:
\begin{itemize}
\item[(1)] $X$ is the $(Q, \pi)$-Doob process;
\item[(2)] $X$ is the piecing out of $X^\text{min}$ with respect to $\pi$;
\item[(3)] $\infty$ is branching for $\bar X$ and $\bar P_0(\infty,\cdot)=\pi$.  
\end{itemize}
\end{theorem}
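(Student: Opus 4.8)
The plan is to prove the cyclic chain of implications $(1)\Rightarrow(3)\Rightarrow(2)\Rightarrow(1)$, exploiting the Ray-Knight modification $\bar X$ as the common bridge between the probabilistic notion of a Doob process and the probabilistic notion of a piecing-out. For $(1)\Rightarrow(3)$: a $(Q,\pi)$-Doob process, by its defining property recalled at the start of Section~\ref{SEC4}, returns to $\bN\cup\{\partial\}$ via a non-local jump at each flying time according to $\pi$. By Corollary~\ref{COR35}, the flying times of $X$ are exactly the times $t$ with $\bar X_{t-}=\infty$; at such a time $\bar X_t$ is distributed according to $\pi$ (using Lemma~\ref{LM33}~(3), which identifies $\bar X_t$ with $X_t$ for a.e.\ $t$, plus the right-continuity of $\bar X$ to pin down the value at the jump time itself). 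In particular $\bar X$ never actually occupies $\infty$ at a positive-probability time; combined with the fact that $\bar X$ is c\`adl\`ag so $\infty$ is genuinely approached, this forces $\bar P_0(\infty,\cdot)=\pi\ne\delta_\infty$, i.e.\ $\infty\in B$ and $\bar P_0(\infty,\cdot)=\pi$. One must be a little careful to argue from the transition-semigroup characterization $\bar P_0(\infty,\cdot)=\lim_{t\downarrow0}\bar P_t(\infty,\cdot)$ together with the strong Markov property of the Ray process applied at the flying time, rather than just sample-path heuristics.

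For $(3)\Rightarrow(2)$: assume $\infty\in B$ with $\bar P_0(\infty,\cdot)=\pi$. The killed process of $\bar X$ (equivalently of $X$) at the first flying time $\eta$ is the minimal $Q$-process $X^{\text{min}}$, by the discussion around \eqref{eq:30}. On the other hand, since $\bar X$ is a Ray process, it is strongly Markov, and at the stopping time $\eta$ we have $\bar X_{\eta-}=\infty$ (Corollary~\ref{COR35}) while $\bar X_\eta\sim\bar P_0(\infty,\cdot)=\pi$ by the branching-point behaviour of Ray semigroups (e.g.\ \cite[Proposition~8.8]{CW05}); from $\bar X_\eta$ the process evolves again as a fresh copy governed by the Ray semigroup, whose killed-at-first-flying-time piece is again $X^{\text{min}}$. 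Iterating, $\bar X$ is obtained by running $X^{\text{min}}$, and at each flying time restarting an independent copy of $X^{\text{min}}$ from a $\pi$-distributed point, exactly the recipe of the Ikeda-Nagasawa-Watanabe piecing-out recalled in Appendix~\ref{APPA}. To make this rigorous one identifies the instantaneous return kernel of the piecing-out with $\pi$ and checks that the resulting pieced-out semigroup satisfies the renewal equation characterizing it uniquely; since $\bar X$ satisfies the same renewal identity (via the strong Markov property at $\eta$ and induction over successive flying times, using $\mathbf P_i(\eta<\infty)=1$ so that only countably many flying times accumulate before any fixed time only if the return distribution has no mass escaping—here $\pi(\{\partial\})$ handles termination), the two processes have the same transition matrix.

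The implication $(2)\Rightarrow(1)$ should be the easiest: the piecing-out of $X^{\text{min}}$ with respect to $\pi$ manifestly returns to $\bN\cup\{\partial\}$ at each flying time (each excursion of $X^{\text{min}}$ ends at its flying time $\eta$, after which a new excursion is launched from a $\pi$-sampled point), and the process is absorbed once $\pi$ selects $\partial$; moreover the return distribution is $\pi$ by construction. Checking that this pieced-out process is a $Q$-process — i.e.\ that its transition matrix is standard with density matrix $Q$ — follows because the pieces are copies of $X^{\text{min}}$, which is a $Q$-process, and the non-local jumps at flying times do not affect the short-time derivative $p'_{ij}(0)$ (flying times are not accumulation points of the process starting from a fixed $i\in\bN$ for small times, by $\mathbf P_i(\eta_n\to\infty)$-type estimates, in fact by $\lim_{t\downarrow0}\mathbf P_i(\eta\le t)/t=0$ as in the proof of Proposition~\ref{PRO23}). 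So the pieced-out process is precisely the $(Q,\pi)$-Doob process by the uniqueness of the latter given its returning distribution.

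The main obstacle I anticipate is the rigorous treatment of the \emph{accumulation of flying times} and the bookkeeping in $(3)\Rightarrow(2)$: one needs that, $\mathbf P$-a.s., only finitely many flying times occur in any bounded interval before absorption, so that the iterative renewal/piecing-out description is well-posed and the induction terminates — this is where the structure of the Ray process (c\`adl\`ag paths, no sojourn at $\infty$, $\mathbf P_i(\eta<\infty)=1$) and the probability $\pi(\{\partial\})>0$ of eventual absorption, or a comparison argument when $\pi(\{\partial\})=0$, must be combined. A secondary subtlety is justifying that the value $\bar X_\eta$ is genuinely $\pi$-distributed (not merely that the one-step kernel at the branching point is $\pi$): this uses the Ray strong Markov property at $\eta$ together with $\bar X_{\eta-}=\infty$ and the identity $\bar P_0(\infty,\cdot)=\lim_{t\downarrow 0}\bar P_t(\infty,\cdot)=\pi$, i.e.\ the Ray process "branches through" $\infty$ according to $\bar P_0(\infty,\cdot)$.
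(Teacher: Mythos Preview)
Your pathwise cycle $(1)\Rightarrow(3)\Rightarrow(2)\Rightarrow(1)$ is viable, but the paper takes a different and shorter route that is worth comparing. The paper works almost entirely with resolvents: it first computes the resolvent of the piecing-out $(X^{\text{min}},\pi)$ via the strong Markov property at the first excursion endpoint, obtaining
\[
\tilde R_{ij}(\alpha)=\Phi_{ij}(\alpha)+u_\alpha(i)\cdot\frac{\sum_{k}\Phi_{kj}(\alpha)\pi_k}{1-\pi(u_\alpha)},
\]
and identifies this with the Doob resolvent via Theorem~\ref{THMB1}~(3), proving $(1)\Leftrightarrow(2)$ in one stroke. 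For $(1)\Rightarrow(3)$ the paper simply lets $i\to\infty$ in this formula to obtain $\bar R_\alpha 1_{\{j\}}(\infty)$ and checks $\alpha\bar R_\alpha 1_{\{j\}}(\infty)\to\pi_j$ as $\alpha\to\infty$, which simultaneously shows $\infty$ is branching and identifies $\bar P_0(\infty,\cdot)$; the converse $(3)\Rightarrow(1)$ is the same Dynkin identity written for $\bar X$ at $\eta$, using the Ray branching formula $\mathbf E[\,\cdot\mid\mathscr F_{\eta-}]=\bar P_0(\infty,\cdot)$, and comparing with the displayed resolvent.

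Your approach trades these explicit computations for sample-path reasoning. Two remarks. First, the accumulation worry you flag in $(3)\Rightarrow(2)$ is not a genuine obstruction: under $(3)$ the post-$\eta^{(k)}$ pieces are, by the Ray strong Markov property, i.i.d.\ copies of the minimal lifetime started from a $\pi|_{\bN}$-distributed point, hence strictly positive, and an i.i.d.\ sum of strictly positive variables diverges a.s.; this covers the honest case $\pi(\{\partial\})=0$ without any extra comparison argument. Second, the ``renewal equation characterizing the pieced-out semigroup uniquely'' that you invoke in $(3)\Rightarrow(2)$ is, once written down in Laplace form, exactly the displayed resolvent identity above---so at that step you are effectively reproducing the paper's computation. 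Likewise, your $(2)\Rightarrow(1)$ verification that the pieced-out process has density matrix $Q$ is subsumed by identifying its resolvent with the one in Theorem~\ref{THMB1}. In short, both routes are correct; the resolvent route is shorter because it bypasses the random-time identification $\bar X_\eta=X_\eta$ and the iterated strong-Markov bookkeeping, and because the analytic characterization of Doob processes in Appendix~\ref{APPB} is already stated in resolvent form.
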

\begin{proof}
We first establish the equivalence between the statements (1) and (2).  
It is sufficient to compute the resolvent $(\tilde{R}_\alpha)$ of the piecing out process $\tilde{X}=(X^\text{min}, \pi)$.  Fix $\alpha>0$ and $i,j\in \bN$.  Adopt the notations from \eqref{SQ2XTW} to \eqref{SQ2XOF}.  Write $\pi_k:=\pi(\{k\})$ and $\Phi_{ij}(\alpha):=R^\text{min}_\alpha(i, \{j\})$ for convenience.  Since $\tau(\tilde{\mathtt w}):=\zeta(\omega_1)$ is an $\tilde{\cF}_t$-stopping time (see the remark before Theorem~1 in \cite{INW66}),   we can use the strong Markov property of $\tilde{X}$ and the Dynkin formula to obtain that
\begin{equation}\label{eq:41}
	\tilde{R}_{ij}(\alpha):=\tilde{R}_\alpha(i, \{j\})=\Phi_{ij}(\alpha)+\tilde{\mathbf{E}}_i \left[e^{-\alpha \tau}\tilde{R}_\alpha (\tilde{X}_\tau,  \{j\}) \right].
\end{equation}
According to \eqref{SQ2XTW} and  \eqref{eq:A3},  we have
\[
\begin{aligned}
	\tilde{\mathbf{E}}_i \left[e^{-\alpha \tau}\tilde{R}_\alpha (\tilde{X}_\tau,  \{j\}) \right]&=\int_{(\omega_1, y_1)\in \Omega\times (\bN\cup\{\partial\})} e^{-\alpha \zeta(\omega_1)}\tilde{R}_\alpha(y_1, \{j\})\mathbf{P}_{i}(d\omega_1)\nu(\omega_1,dy_1) \\
	&=u_\alpha(i)\sum_{k\in \bN\cup\{\partial\}} \tilde{R}_{kj} (\alpha)\pi_k. 
\end{aligned}
\]
Integrating \eqref{eq:41} with respect to $\pi$ over $i$,  we get
\[
\sum_{k\in \bN\cup\{\partial\}} \tilde{R}_{kj} (\alpha)\pi_k=\frac{\sum_{k\in \bN\cup\{\partial\}}\Phi_{kj} (\alpha)\pi_k }{1-\pi(u_\alpha)}.
\]
It follows that
\begin{equation}\label{eq:42}
\tilde{R}_{ij}(\alpha)=\Phi_{ij}(\alpha)+ u_\alpha(i)\cdot \frac{\sum_{k\in \bN\cup\{\partial\}}\Phi_{kj} (\alpha)\pi_k }{1-\pi(u_\alpha)}.  
\end{equation}
On account of Theorem~\ref{THMB1}~(3),  $\tilde{R}_\alpha$ is {equal to} the resolvent of the $(Q,\pi)$-Doob process.  Therefore, this equivalence can be eventually concluded. 

{Next, suppose that $X$ is the $(Q,\pi)$-Doob process. According to Theorem~\ref{THM32}~(4),  $\bar{R}_\alpha 1_{\{j\}}(i)$ is equal to  \eqref{eq:42} for all $i,j\in \bN$.   Letting $i\rightarrow \infty$ in \eqref{eq:42} and noting that $\Phi_{ij}(\alpha)\rightarrow 0$ and $u_\alpha(i)\rightarrow 1$ as $i\rightarrow \infty$ (see Lemma~\ref{LM31}),  one obtains
\[
	\bar R_\alpha1_{\{j\}}(\infty)=\frac{\sum_{k\in \bN\cup\{\partial\}}\Phi_{kj} (\alpha)\pi_k }{1-\pi(u_\alpha)}.
\]
A straightforward computation then yields that $\alpha \bar R_\alpha 1_{\{j\}}(\infty)\rightarrow \pi_j\neq 1_{\{j\}}(\infty)$ as $\alpha\rightarrow \infty$.  Thus, $\infty$ is branching for $\bar X$.  Mimicking \eqref{eq:41},  we can obtain
\[	\bar R_\alpha 1_{\{j\}}(i)=\Phi_{ij}(\alpha)+{\mathbf{E}}_i \left[e^{-\alpha\eta}\bar R_\alpha 1_{\{j\}} (\bar X_\eta) \right],
\]
where $\eta$ is defined as \eqref{eq:30}.  It follows from \cite[Proposition~8.7]{CW05} that
\[
{\mathbf{E}}_i \left[e^{-\alpha\eta}\bar R_\alpha 1_{\{j\}} (\bar X_\eta) \right]=\mathbf{E}_i\left[e^{-\alpha \eta} \mathbf{E}_i \left[\bar R_\alpha 1_{\{j\}} (\bar X_\eta)|\sF_{\eta-} \right] \right]=u_\alpha(i)\pi'(\bar R_\alpha 1_{\{j\}}),
\]
where $\pi':=\bar P_0(\infty, \cdot)$.  Hence, we can further obtain
\begin{equation}\label{eq:43}
	\bar R_\alpha 1_{\{j\}}(i)=\Phi_{ij}(\alpha)+ u_\alpha(i)\cdot \frac{\sum_{k\in \bN\cup\{\partial\}}\Phi_{kj} (\alpha)\pi'_k }{1-\pi'(u_\alpha)},
\end{equation}
where $\pi'_k:=\pi'(\{k\})$.  Comparing \eqref{eq:43} with \eqref{eq:42}, one has
\begin{equation}\label{eq:44}
	\frac{\sum_{k\in \bN\cup\{\partial\}}\alpha \Phi_{kj} (\alpha)\pi_k }{1-\pi(u_\alpha)}= \frac{\sum_{k\in \bN\cup\{\partial\}}\alpha\Phi_{kj} (\alpha)\pi'_k }{1-\pi'(u_\alpha)}.  
\end{equation}
Letting $\alpha\rightarrow \infty$ in \eqref{eq:44},  we have $\pi_k=\pi'_k$.  Therefore, the conclusion $\bar P_0(\infty, \cdot)=\pi$ follows. 
}

Finally, suppose that $\infty$ is a branching point for $\bar{X}$ and $\bar{P}_0(\infty, \cdot)=\pi$.  Repeating the arguments from the previous steps, one can readily obtain \eqref{eq:43} with $\pi'=\pi$. Thus, $X$ is indeed the $(Q,\pi)$-Doob process. This completes the proof.
\end{proof}

{
It is shown in \cite[Chapter I, Theorem~9.13]{S88} that the restriction of a Ray semigroup to the set of non-branching points admits a realization as a Borel right process. For the definition regarding the realization of a semigroup, the readers are referred to \cite{S88}.  Additionally, in the absence of branching points,  Ray processes are Feller processes; see e.g. \cite[III (37.1)]{RW00}. Given these results established in the literature, the following corollary readily follows.

\begin{corollary}\label{COR52}
The transition matrix of a Doob process has a realization on $\bN\cup\{\partial\}$ as a Borel right process.  Every Ray $Q$-process that is not a Doob process is a Feller process on $\bN\cup\{\infty\}\cup\{\partial\}$. 
\end{corollary}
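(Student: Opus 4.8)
The plan is to assemble the corollary from two facts already available: (i) the structural description of Ray $Q$-processes provided by Theorem~\ref{THM41}, which dichotomizes them into Doob processes and non-Doob processes according as $\infty$ is branching or not, and (ii) the two cited structural results about Ray processes, namely that the restriction of a Ray semigroup to the non-branching set is realizable as a Borel right process (\cite[Chapter I, Theorem~9.13]{S88}) and that a Ray process with no branching points is a Feller process (\cite[III (37.1)]{RW00}).

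For the first assertion, I would argue as follows. Let $X$ be a Doob process; by Theorem~\ref{THM41} its Ray-Knight compactification $\bar X$ lives on $\bF = \bN\cup\{\infty\}\cup\{\partial\}$ with $\infty$ branching and $\bar P_0(\infty,\cdot)=\pi$ supported on $\bN\cup\{\partial\}$. By Lemma~\ref{LM33}~(2), every point of $\bN\cup\{\partial\}$ is non-branching, so the non-branching set is $D=\bN\cup\{\partial\}$, i.e.\ precisely $\bF$ with the single branching point $\infty$ removed. Applying \cite[Chapter I, Theorem~9.13]{S88} to the Ray semigroup $(\bar P_t)$ restricted to $D$, one obtains a Borel right process on $D=\bN\cup\{\partial\}$ whose transition semigroup agrees with $(\bar P_t)$ on $D$; in particular its transition function restricted to $\bN$ coincides with $(p_{ij}(t))_{i,j\in\bN}$ by Lemma~\ref{LM33}~(1) (or Theorem~\ref{THM32}~(6)). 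This is exactly the claim that the transition matrix of the Doob process has a realization on $\bN\cup\{\partial\}$ as a Borel right process.

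For the second assertion, let $X$ be a Ray $Q$-process that is not a Doob process. By Theorem~\ref{THM41} (the contrapositive of the equivalence (1)$\Leftrightarrow$(3)), $\infty$ is not branching for $\bar X$; combined with Lemma~\ref{LM33}~(2), which places $\bN\cup\{\partial\}$ in $D$, we conclude $B=\bF\setminus D=\emptyset$, so $\bar X$ has no branching points at all. Then \cite[III (37.1)]{RW00} applies directly: a Ray process on a compact metric space with empty branching set is a Feller process. Since $\bF=\bN\cup\{\infty\}\cup\{\partial\}$ by Lemma~\ref{LM33}~(4), $\bar X$ is a Feller process on $\bN\cup\{\infty\}\cup\{\partial\}$, and $\bar X$ is a modification of $X$ with the same transition matrix, so the statement follows for $X$ as well.

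I do not anticipate a genuine obstacle here, since both heavy inputs are quoted verbatim from the literature and the only work is bookkeeping: identifying the non-branching set with $\bN\cup\{\partial\}$ via Lemma~\ref{LM33}~(2) and the branching alternative via Theorem~\ref{THM41}. The one point that deserves a sentence of care is the identification of the realized transition function with the original matrix $(p_{ij}(t))$ in the Doob case — this requires knowing that $\bar P_t(i,\{j\})=p_{ij}(t)$ for $i,j\in\bN$, which is Theorem~\ref{THM32}~(4) together with right-continuity in $t$ of $\bar P_t f(i)$ as used already in the proof of Lemma~\ref{LM33}~(2). A secondary bookkeeping point is that \cite[Chapter I, Theorem~9.13]{S88} is stated for the realization of the restricted semigroup; one should remark that ``realization on $\bN\cup\{\partial\}$'' in the statement of the corollary means precisely this, so no additional argument is needed.
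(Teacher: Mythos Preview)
Your proposal is correct and follows exactly the approach the paper takes: the paper simply cites \cite[Chapter I, Theorem~9.13]{S88} and \cite[III (37.1)]{RW00} in the paragraph preceding the corollary and declares that it ``readily follows,'' without spelling out the bookkeeping. You have supplied precisely that bookkeeping --- identifying $D=\bN\cup\{\partial\}$ via Lemma~\ref{LM33}~(2) in the Doob case and $B=\emptyset$ via Theorem~\ref{THM41} in the non-Doob case --- which is the intended argument.
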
}

\section{Infinitesimal generators of Feller birth and death processes}

{The subsequent sections of this paper will focus on $Q$-processes with Ray-Knight compactifications that do not display branching behaviour at $\infty$.  Following Lemma~\ref{LM33}, we will not distinguish between the $Q$-process and its Ray-Knight compactification,  assuming that the $Q$-process $X$ is a Ray process on $\mathbf{F}=\mathbb{N}\cup \{\infty\}\cup\{\partial\}$ without further specification.  When $\infty$ is non-branching for $X$,  this process is called a Feller $Q$-process, as Corollary~\ref{COR52} demonstrates that it represents a Feller process on $\bF$.
}

{\subsection{Maximal generalized second order differential operator}
Before proceeding, we will introduce a \emph{maximal (discrete) generalized second order differential operator}; see,  e.g., \cite{F59}.  The birth and death matrix $Q$, as defined in \eqref{eq:11},  will be used in the following discussion.  By an abuse of notation, define a function $QF$ on $\bN$ for every function $F$ on $\bN$ as
\begin{equation}\label{eq:72}
	QF(k):=a_k F(k-1)-q_kF(k)+b_k F(k+1),\quad k\geq 0,
\end{equation}
where $F(-1):=0$.  {This definition provides the desired operator, which is also denoted by $Q$.}
In most cases,  we focus on the restriction of this operator to the set $C(\bN\cup \{\infty\})$ of all continuous functions on $\bN\cup \{\infty\}$.  
A function $F$ defined on $\bN$ can be extended to a function in $C(\bN\cup\{\infty\})$ if and only if $F(\infty):=\lim_{k\uparrow\infty}F(k)$ exists.  In this case, we represent its extension as the same symbol $F$.   Define a subset of $C(\bN\cup \{\infty\})$ as follows:
\[
\cD(Q):=\{F\in C(\bN\cup\{\infty\}): QF\in C(\bN\cup\{\infty\})\},
\]
which will be used later. 

It is meaningful to provide an alternative expression for this operator $Q$.  
Before that, we introduce some new notations for a function $F$ on $\bN$.  Let us define
\begin{equation}\label{eq:61-2}
	F^+(k):=\frac{F(k+1)-F(k)}{c_{k+1}-c_k},\quad k\geq 0
\end{equation}
and $F^+(\infty):=\lim_{k\uparrow \infty}F^+(k)$ if it exists.  In order to maintain consistency,  we adopt the convention
\[
	F^+(-1):=0,
\]
which can be regarded as a \emph{fictitious} Neumann boundary condition at $0$.  With this in mind,  we further define
\[
	D_\mu F^+(k):=\frac{F^+(k)-F^+(k-1)}{\mu_k},\quad k\geq 0.  
\]
The following result, which shows why the operator $Q$ is called a ``second order differential operator",  is well-known.}

\begin{lemma}\label{LM71}
For every function $F$ on $\bN$,  
\begin{equation}\label{eq:73}
	QF=\frac{1}{2}D_\mu F^+.  
\end{equation}
Furthermore,  $\cD(Q)$ is dense in $C(\bN\cup\{\infty\})$.
\end{lemma}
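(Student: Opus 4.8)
The plan is to verify the identity \eqref{eq:73} by a direct computation, carefully tracking the fictitious boundary conventions at $k=0$, and then to establish density of $\cD(Q)$ in $C(\bN\cup\{\infty\})$ by an explicit approximation argument. For the first part, I would simply unwind the definitions: by \eqref{eq:61-2} one has $F^+(k) = (F(k+1)-F(k))/(c_{k+1}-c_k)$, so for $k\geq 1$
\[
	D_\mu F^+(k) = \frac{1}{\mu_k}\left(\frac{F(k+1)-F(k)}{c_{k+1}-c_k} - \frac{F(k)-F(k-1)}{c_k-c_{k-1}}\right).
\]
Using the explicit form of $c_k$ in the Notations section, one computes $c_{k+1}-c_k = \frac{a_1\cdots a_k}{2b_0 b_1\cdots b_k}$ for $k\geq 1$ (and $c_1-c_0 = \frac{1}{2b_0}$), together with $\mu_k = \frac{b_0\cdots b_{k-1}}{a_1\cdots a_k}$ from \eqref{eq:03}. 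Substituting, the coefficient of $F(k+1)$ becomes $\frac{1}{\mu_k(c_{k+1}-c_k)} = 2b_k$, the coefficient of $F(k-1)$ becomes $\frac{1}{\mu_k(c_k-c_{k-1})} = 2a_k$, and the coefficient of $F(k)$ is $-2(a_k+b_k) = -2q_k$; thus $\tfrac12 D_\mu F^+(k) = a_kF(k-1) - q_kF(k) + b_kF(k+1) = QF(k)$, matching \eqref{eq:72}. The case $k=0$ must be checked separately: here $F^+(-1)=0$ by the fictitious Neumann convention and $\mu_0 = 1$, so $\tfrac12 D_\mu F^+(0) = \tfrac12 F^+(0) = \tfrac{F(1)-F(0)}{2(c_1-c_0)} = b_0(F(1)-F(0)) = b_0 F(1) - q_0 F(0)$ since $q_0 = a_0+b_0 = b_0$; this agrees with \eqref{eq:72} with $F(-1)=0$ and $a_0=0$. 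So the algebra in the first part is routine once the scale-function formula is in hand; the only subtlety is bookkeeping at the boundary.

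For the density statement, the natural approach is to exhibit enough functions in $\cD(Q)$ to approximate an arbitrary $G\in C(\bN\cup\{\infty\})$ uniformly. One clean way: it suffices to approximate the constant function $1$ and each indicator-type "bump" whose span is dense. Since $C(\bN\cup\{\infty\})$ is the space of convergent sequences, its closed span is generated by $1$ together with the functions $e_m$ defined by $e_m(k) = 1$ for $k\leq m$ and $e_m(k) = 0$ for $k > m$ — wait, these have no limit issues since they are eventually $0$, hence lie in $C(\bN\cup\{\infty\})$ with $e_m(\infty)=0$. Each $e_m$ is finitely supported, so $Qe_m$ is also finitely supported, hence $Qe_m\in C(\bN\cup\{\infty\})$ and $e_m\in\cD(Q)$; and $1\in\cD(Q)$ trivially since $Q1 = 0$. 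Since finite linear combinations of $1$ and the $e_m$ are uniformly dense in $C(\bN\cup\{\infty\})$ (any convergent sequence $G$ with limit $\ell$ is uniformly approximated by $\ell\cdot 1 + \sum_{k\le m}(G(k)-\ell)(e_k - e_{k+1})$, whose tail error is $\sup_{k>m}|G(k)-\ell|\to 0$), density of $\cD(Q)$ follows. Alternatively, one can invoke that $\cD(Q)$ contains the domain of the minimal generator (via Lemma~\ref{LM21} and the Dirichlet form picture), but the elementary finite-support argument is self-contained and shorter.

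I would present the proof in two short paragraphs mirroring the two assertions, doing the $k\ge 1$ computation in display and then remarking on the $k=0$ boundary case. The main (minor) obstacle is purely notational: one must be scrupulous about the fictitious conventions $F(-1)=0$ and $F^+(-1)=0$ and about the special values $c_0=0$, $c_1=1/(2b_0)$, $\mu_0=1$, $a_0=0$, $q_0=b_0$, since these are exactly where an off-by-one error would creep in; there is no conceptual difficulty beyond that. For density, there is essentially no obstacle — the finitely-supported functions automatically lie in $\cD(Q)$ because $Q$ is a local (tridiagonal) operator, so applying it preserves finite support and hence membership in $C(\bN\cup\{\infty\})$.
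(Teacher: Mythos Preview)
Your proposal is correct and follows essentially the same route as the paper's proof. The paper dispatches \eqref{eq:73} as ``a straightforward computation'' (which you carry out in full, correctly handling the $k=0$ boundary case), and for density it simply observes that $1_{\bN\cup\{\infty\}}\in\cD(Q)$ and $C_c(\bN)\subset\cD(Q)$; your argument with the step functions $e_m$ and the explicit tail estimate is just a more detailed version of the same observation.
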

\begin{proof}
The equality \eqref{eq:73} can be easily verified through a straightforward computation. To demonstrate the denseness of $\cD(Q)$, it is sufficient to observe that $1_{\bN\cup\{\infty\}}\in \cD(Q)$ and that $C_c(\bN):=\{F\in C(\bN\cup \{\infty\}): \exists n\in \bN \text{ s.t. } F(k)=0,\forall k\geq n\}$ is a subset of $\cD(Q)$. This concludes the proof.
\end{proof}

\subsection{{Infinitesimal generators of Feller $Q$-processes}}

{Let $X$ be a Feller $Q$-process on $\bN\cup\{\infty\}\cup \{\partial\}$.  Its semigroup $(P_t)_{t\geq 0}$ acts on $C(\bN\cup \{\infty\})$ as a strongly continuous contraction semigroup. This means that $P_tC(\bN\cup \{\infty\})\subset C(\bN\cup \{\infty\})$ and $P_tF\rightarrow F$ in $C(\bN\cup \{\infty\})$ as $t\downarrow 0$ for any $F\in C(\bN\cup \{\infty\})$.  Then, it is well known that a function $F\in C(\bN\cup \{\infty\})$ is said to belong to the domain $\cD(\sL)$ of the infinitesimal generator of $X$ if the limit
\[
		\sL F:=\lim_{t\downarrow 0}\frac{P_t F-F}{t}
\]
exists in $C(\bN\cup \{\infty\})$.  The operator $\sL: \cD(\sL)\rightarrow C(\bN\cup \{\infty\})$ thus defined is called the infinitesimal generator of the process $X$.  

The aim of this section is to characterize the infinitesimal generator of the Feller $Q$-process.  To accomplish this aim, we will utilize the triplet $(\nu, \gamma, \beta)$ proposed by Wang and Yang in \cite{WY92} to characterize the resolvent matrix of a $Q$-process. 
In this triplet, $\nu$ is a positive measure on $\bN$, and $\gamma$ and $\beta$ are two non-negative constants. The relevant details about this characterization can be found in Appendix~\ref{APPB}.  With the help of Corollary~\ref{COR52} and Theorem~\ref{THMB1}, the following lemma is straightforward. 

\begin{lemma}\label{LM62}
Let $X$ be a $Q$-process with the triple $(\nu, \gamma, \beta)$ determining its resolvent matrix.  Then,  $X$ is a Feller $Q$-process on $\bN\cup \{\infty\}\cup \{\partial\}$,  if and only if \eqref{eq:B1},  \eqref{eq:B4} and \begin{equation}\label{eq:71}
|\nu|=\infty\text{ or }\beta>0
\end{equation}
are satisfied.
\end{lemma}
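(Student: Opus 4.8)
The plan is to deduce Lemma~\ref{LM62} directly from the two results it cites, namely Corollary~\ref{COR52} (a Ray $Q$-process is a Feller process on $\bN\cup\{\infty\}\cup\{\partial\}$ precisely when $\infty$ is non-branching) and Theorem~\ref{THMB1} together with the resolvent representation of Appendix~\ref{APPB} (which parametrizes $Q$-processes by the triple $(\nu,\gamma,\beta)$ subject to the constraints \eqref{eq:B1} and \eqref{eq:B4}). The only genuinely new content is the translation of the words ``$\infty$ is non-branching'' into the analytic condition \eqref{eq:71}, so the proof reduces to a computation of $\lim_{\alpha\to\infty}\alpha\bar R_\alpha 1_{\{j\}}(\infty)$ in terms of the triple.

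First I would recall from Theorem~\ref{THM32}~(4) and Lemma~\ref{LM33} that the Ray resolvent $\bar R_\alpha$ restricted to $\bN\cup\{\partial\}$ agrees with $R_\alpha$, and that $\infty$ is the unique possibly-branching point; hence, by the definition of the non-branching set $D$, it suffices to test whether $\alpha\bar R_\alpha F(\infty)\to F(\infty)$ as $\alpha\uparrow\infty$ for $F\in C(\bF)$. Since $\bar R_\alpha 1_{\{j\}}(i)=R_\alpha(i,\{j\})$ is given explicitly by the Wang--Yang formula of Appendix~\ref{APPB}, I would let $i\to\infty$ in that formula to obtain a closed expression for $\bar R_\alpha 1_{\{j\}}(\infty)$ in terms of $(\nu,\gamma,\beta)$, exactly as was done in the proof of Theorem~\ref{THM41} in the Doob case (where one reads off $\alpha\bar R_\alpha 1_{\{j\}}(\infty)\to\pi_j$). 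The key point is that the ``denominator'' appearing in the resolvent representation is a linear combination of $\gamma$, $\beta$, and a quantity controlled by $|\nu|$ (through the integral $\int u_\alpha\,d\nu$-type term), and its behaviour as $\alpha\to\infty$ is governed precisely by whether $\beta>0$ or $|\nu|=\infty$. When $\beta=0$ and $|\nu|<\infty$, the limit $\lim_{\alpha\to\infty}\alpha\bar R_\alpha 1_{\{j\}}(\infty)$ exists and defines a genuine probability (sub)measure $\pi$ on $\bN\cup\{\partial\}$ with $\pi\neq\delta_\infty$, so $\infty$ is branching and by Theorem~\ref{THM41} the process is a Doob process, not Feller; conversely, when $|\nu|=\infty$ or $\beta>0$, one checks that $\alpha\bar R_\alpha 1_{\{j\}}(\infty)\to 0=1_{\{j\}}(\infty)$ for every $j\in\bN$ and similarly $\alpha\bar R_\alpha u_1(\infty)\to u_1(\infty)$ is handled via $R_\alpha u_1$, giving non-branching at $\infty$, hence the Feller property by Corollary~\ref{COR52}.

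To finish I would note the converse bookkeeping: any $Q$-process comes with a triple satisfying \eqref{eq:B1} and \eqref{eq:B4} by the Wang--Yang theory (Theorem~\ref{THMB1}), so the content of the lemma is entirely the equivalence ``$\infty$ non-branching $\iff$ \eqref{eq:71}'', which the above computation supplies in both directions. I expect the main obstacle to be purely bookkeeping rather than conceptual: one must extract from the somewhat involved explicit resolvent formula in Appendix~\ref{APPB} the precise asymptotics of its denominator as $\alpha\to\infty$, keeping track of how $\gamma$, $\beta$, and the mass and tail of $\nu$ enter, and confirming that the $\beta>0$ and $|\nu|=\infty$ contributions are exactly the ones that force the limiting ``return measure'' to vanish. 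Once that asymptotic is in hand, invoking Theorem~\ref{THM41} (to identify the branching case with Doob processes) and Corollary~\ref{COR52} (to conclude the Feller property in the non-branching case) closes the argument.
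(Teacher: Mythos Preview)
Your approach is correct but does considerably more work than the paper intends. The paper gives no proof at all: it simply writes ``With the help of Corollary~\ref{COR52} and Theorem~\ref{THMB1}, the following lemma is straightforward.'' The intended argument is a pure dichotomy with no new computation: Corollary~\ref{COR52} (together with Theorem~\ref{THM41}) says that a Ray $Q$-process is Feller on $\bN\cup\{\infty\}\cup\{\partial\}$ if and only if it is \emph{not} a Doob process; Theorem~\ref{THMB1}(2) says that, among non-minimal $Q$-processes (those whose triple satisfies \eqref{eq:B1} and \eqref{eq:B4}), the Doob processes are exactly those with $0<|\nu|<\infty$ and $\beta=0$. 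Negating this under the standing hypothesis $|\nu|+\beta\neq 0$ from \eqref{eq:B1} gives precisely \eqref{eq:71}.

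Your plan to compute $\lim_{\alpha\to\infty}\alpha\bar R_\alpha 1_{\{j\}}(\infty)$ directly from the explicit resolvent formula \eqref{eq:B2} would also succeed, and in fact that computation for the Doob case was already carried out inside the proof of Theorem~\ref{THM41}. But redoing it here in the Feller case (handling the asymptotics of $\beta\alpha\sum_k\mu_k u_\alpha(k)$ and of $\sum_k\nu_k(1-u_\alpha(k))$ when $|\nu|=\infty$) is redundant once Theorem~\ref{THM41} has established the full equivalence ``$\infty$ branching $\iff$ Doob''. The paper's route is shorter precisely because it cashes in that earlier work; your route is more self-contained and would be the right one if Theorem~\ref{THM41} had only proved one implication.
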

}




We are now {ready} to present the main result of this section.  In the following expression \eqref{eq:74},  when $\infty$ is an exit,  we set $\beta F^+(\infty):=0$, even if $F^+(\infty)$ does not exist.  Otherwise, $F^+(\infty)$ and the sum $\sum_{k\geq 0}(F(\infty)-F(k))\nu_k$ are assumed to be well defined for any $F$ in this family.  

\begin{theorem}\label{THM62}
A linear operator $\sL$ with domain $\cD(\sL)$ on $C(\bN\cup\{\infty\})$ is the infinitesimal {generator} of a Feller $Q$-process $X$ on $\bN\cup\{\infty\}\cup \{\partial\}$,  if and only if there exists a unique (up to a multiplicative constant) triple $(\nu, \gamma,\beta)$ as described in Lemma~\ref{LM62} such that
\begin{equation}\label{eq:74}
\begin{aligned}
\cD(\sL)=\bigg\{F\in \cD(Q): { \frac{\beta}{2}} F^+(\infty)+\sum_{k\geq 0}(F(\infty)-F(k))\nu_k +\gamma F(\infty)=0\bigg\} 
\end{aligned}
\end{equation}
and $\sL F=QF$ for $F\in \cD(\sL)$.  {In this case,} the resolvent matrix of $X$  is given by \eqref{eq:B2} using this triple.  
\end{theorem}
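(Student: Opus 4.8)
The strategy is to identify $\cD(\sL)$ and the action of $\sL$ through the resolvent $(\bar R_\alpha)_{\alpha>0}$ of $X$, which by Theorem~\ref{THM32}~(4) coincides with the resolvent matrix constructed in Appendix~\ref{APPB} from the triple $(\nu,\gamma,\beta)$. Recall the standard fact that for a Feller process, $\cD(\sL)=\bar R_\alpha\big(C(\bN\cup\{\infty\})\big)$ for any fixed $\alpha>0$, and on this range $\sL=\alpha-\bar R_\alpha^{-1}$, i.e. $\sL\bar R_\alpha f=\alpha\bar R_\alpha f-f$. So the first step is purely computational: take $F=\bar R_\alpha f$ for $f\in C(\bN\cup\{\infty\})$, read off the explicit formula \eqref{eq:B2} for $R_\alpha(i,\{j\})$ and its fictitious-state value at $\infty$, and verify two things — (a) $F\in\cD(Q)$ with $QF=\alpha F-f$ (this is the statement that the resolvent solves $(\alpha-Q)F=f$ in the interior, which is built into the construction of \eqref{eq:B2} and amounts to the identity \eqref{eq:73} together with termwise checking of \eqref{eq:72}); and (b) $F$ satisfies the boundary relation in \eqref{eq:74}. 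Step (b) is where the triple enters: the term $\tfrac{\beta}{2}F^+(\infty)$, the jump sum $\sum_k(F(\infty)-F(k))\nu_k$, and $\gamma F(\infty)$ must be shown to cancel, and this cancellation is precisely the defining relation among $\nu,\gamma,\beta$ encoded in the formula for the ``fictitious'' column/row of the resolvent in \cite{WY92}; I would extract it from the representation reviewed in Appendix~\ref{APPB}.

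Next I would prove the reverse inclusion: if $F\in\cD(Q)$ satisfies the boundary condition in \eqref{eq:74}, then $F\in\cD(\sL)$ and $\sL F=QF$. Set $f:=\alpha F-QF\in C(\bN\cup\{\infty\})$ (continuity of $QF$ is part of $F\in\cD(Q)$) and let $G:=\bar R_\alpha f$. By the forward computation, $G\in\cD(\sL)$, $G\in\cD(Q)$, $(\alpha-Q)G=f=(\alpha-Q)F$, and $G$ also satisfies the boundary condition. Thus $H:=F-G$ lies in $\cD(Q)$, solves $(\alpha-Q)H=0$, and satisfies the homogeneous boundary condition $\tfrac{\beta}{2}H^+(\infty)+\sum_k(H(\infty)-H(k))\nu_k+\gamma H(\infty)=0$. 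The space of solutions of $(\alpha-Q)H=0$ on $\bN$ is two-dimensional (spanned, say, by the increasing and the minimal/decreasing solutions in the Feller theory), but membership in $C(\bN\cup\{\infty\})$, i.e. existence of $H(\infty)$, together with $\alpha>0$ typically cuts this down to one dimension; the boundary condition then must force $H\equiv 0$. Carrying out this dimension count and checking that the boundary functional is nonzero on the surviving solution — using \eqref{eq:71}, i.e. $|\nu|=\infty$ or $\beta>0$, to rule out degeneracy, and treating the exit case separately where $\beta F^+(\infty):=0$ by convention — is the technical heart of the argument. I expect this uniqueness step to be the main obstacle, since it requires delicate control of the two fundamental solutions near $\infty$ and an appeal to Lemma~\ref{LM31}-type estimates.

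With both inclusions established, $\cD(\sL)$ equals the right-hand side of \eqref{eq:74} and $\sL F=QF=\tfrac12 D_\mu F^+$ there by Lemma~\ref{LM71}. It remains to address existence and uniqueness of the triple and the converse direction of the theorem. For the ``if'' direction: given a triple satisfying \eqref{eq:B1}, \eqref{eq:B4} and \eqref{eq:71}, Lemma~\ref{LM62} produces a Feller $Q$-process $X$ with that resolvent matrix, and the computation above shows its generator is exactly the operator described by \eqref{eq:74}; since a Feller process is determined by its generator, the last sentence about the resolvent matrix being \eqref{eq:B2} is immediate. For uniqueness of the triple (up to a multiplicative constant): two triples giving the same $\cD(\sL)$ give the same boundary functional up to scaling, and one recovers $\nu_k$, $\gamma$, $\beta$ from this functional by evaluating it on a well-chosen family of test functions in $\cD(Q)$ — e.g. functions supported near a single point $k$ to read off $\nu_k$, the constant function to read off $\gamma$ (modulo $|\nu|$), and a function with prescribed $F^+(\infty)$ to read off $\beta$ — which pins the triple down uniquely up to the overall scalar; I would also note the minor subtlety flagged in case (a) of Section~\ref{SEC23-2} that the pair $(\gamma,|\nu|)=(0,0)$ versus $\gamma\ne0$ both collapse to the minimal process, consistent with the stated normalization. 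Finally, the ``only if'' direction follows because any Feller $Q$-process is, by Lemma~\ref{LM62}, associated with such a triple, so its generator is of the asserted form.
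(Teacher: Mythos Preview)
Your proposal is correct and follows essentially the same route as the paper: show $R_\alpha f$ lies in $\cD(Q)$ and satisfies the boundary condition (your step (b) is exactly the paper's third step, carried out via the decomposition $R_\alpha f=R^{\text{min}}_\alpha f+M(f)u_\alpha$ and the explicit formulas $(R^{\text{min}}_\alpha f)^+(\infty)=-2\sum_j u_\alpha(j)f(j)\mu_j$ and $u_\alpha^+(\infty)=2\alpha\sum_j u_\alpha(j)\mu_j$ from \cite{F59}), and then prove uniqueness of solutions to $(\alpha-\sL)G=f$ to get the reverse inclusion.

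Two small corrections. First, the solution space of $(\alpha-Q)H=0$ on $\bN$ is already one-dimensional, not two: the definition \eqref{eq:72} builds in the reflecting condition at $0$ (equivalently $F^+(-1)=0$), so the $k=0$ equation determines $H(1)$ from $H(0)$ and the recursion does the rest. The paper simply cites \cite[Theorem~7.1]{F59} to conclude $H=K u_\alpha$; your ``cut down by continuity at $\infty$'' step is unnecessary. The boundary functional applied to $u_\alpha$ is then strictly positive because $u_\alpha$ is strictly increasing with $u_\alpha(\infty)=1$, so all three terms are nonnegative and at least one is positive by $|\nu|+\beta\neq 0$ (you do not need the stronger \eqref{eq:71} here). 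Second, your step (a) assertion that $QF=\alpha F-f$ is ``built into the construction'' deserves the paper's argument: since $X$ is Feller, $t^{-1}(P_tF-F)$ converges uniformly to some $h\in C(\bN\cup\{\infty\})$, while \cite[II \S3, Corollary]{C67} gives the pointwise limit $QF(k)$, hence $QF=h\in C(\bN\cup\{\infty\})$, which is how one gets $F\in\cD(Q)$ without computing $QF$ termwise.
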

\begin{proof}
Let $(\nu, \gamma,\beta)$ be such a triple in Lemma~\ref{LM62}, and let $X$ be the corresponding Feller $Q$-process on $\bN\cup\{\infty\}\cup \{\partial\}$ with semigroup denoted by $(P_t)_{t\geq 0}$ and resolvent denoted by $(R_\alpha)_{\alpha>0}$. Define $\sL:=Q$ with the domain $\cD(\sL)$ as given in \eqref{eq:74}.  It suffices to prove that for any $f\in C(\bN\cup\{\infty\})$, we have $R_\alpha f\in \cD(\sL)$ and that $F:=R_\alpha f$ is the unique solution of the equation
\begin{equation}\label{eq:78}
	\alpha G-\sL G=f.  
\end{equation}

We first note that
\begin{equation}\label{eq:75}
\begin{aligned}
	F(k)&=R^\text{min}_\alpha f(k)+u_\alpha(k)\cdot \frac{\sum_{i\geq 0}\nu_i R^\text{min}_\alpha f(i)+\beta\sum_{i\geq 0} u_\alpha(i)f(i)\mu_i}{\gamma+\sum_{i\geq 0}\nu_i(1-u_\alpha(i))+\beta\alpha\sum_{i\geq 0}\mu_i u_\alpha(i)} \\
	&=:R^\text{min}_\alpha f(k)+u_\alpha(k)\cdot M(f),
\end{aligned}\end{equation}
where $M(f)$ is a constant depending on $(\nu, \gamma, \beta)$ and $f$.  In addition,  in view of \cite[Lemma~9.1]{F59} and Lemma~\ref{LM31},  $F\in C(\bN\cup\{\infty\})$ with $F(\infty)=M(f)$.  

Secondly,  we show that $F\in \cD(Q)$.  Since $X$ is {a Feller process},  there exists a function $h\in C(\bN\cup\{\infty\})$ such that
\[
	\lim_{t\downarrow 0}\sup_k\left|\frac{P_tF(k)-F(k)}{t}-h(k)\right|=0.  
\]
It follows from \cite[II \S3, Corollary]{C67} that
\[
	\left|\frac{P_tF(k)-F(k)}{t}-QF(k)\right|\leq \left(\sum_{j}\left| \frac{p_{kj}(t)-\delta_{kj}}{t}-q_{kj} \right| \right) \sup_{j}|F(j)|\rightarrow 0
\]
as $t\downarrow 0$ for any $k\geq 0$.  Thus,  we have $QF=h\in C(\bN\cup\{\infty\})$.  

Thirdly,  we derive the boundary condition for $F$ and establish that $F\in \cD(\sL)$.  We will only consider the case where $\infty$ is regular, as the other case where $\infty$ is an exit can be treated similarly.  According to \cite[Theorem~8.1]{F59},  it is known that $(R^\text{min}_\alpha f)^+(\infty):=\lim_{k\uparrow \infty} (R^\text{min}_\alpha f)^+(k)$ exists and can be expressed as 
\[
	(R^\text{min}_\alpha f)^+(\infty)=-{2}\sum_{j\in \bN} u_\alpha(j)f(j)\mu_j.
\]
It should be noted that $u_\alpha^+(\infty)=\lim_{k\uparrow \infty}u^+_\alpha(k)<\infty$ due to \cite[(7.4)]{F59}.  Therefore, we can use \eqref{eq:75} to obtain that 
\begin{equation}\label{eq:76}
	F^+(\infty)=-{2}\sum_{j\in \bN} u_\alpha(j)f(j)\mu_j+M(f)\cdot u^+_\alpha(\infty).  
\end{equation}
On the other hand,  we have
\[
	F(\infty)-F(k)=M(f)(1-u_\alpha(k))-R^\text{min}_\alpha f(k),
\]
which implies that
\begin{equation}\label{eq:77}
\sum_{k\geq 0}(F(\infty)-F(k))\nu_k=M(f)\sum_{i\geq 0}\nu_i(1-u_\alpha(i))-\sum_{i\geq 0}\nu_i R^\text{min}_\alpha f(i). 
\end{equation}
By using \eqref{eq:76},  \eqref{eq:77},  $F(\infty)=M(f)$ and noting that $u^+_\alpha(\infty)={2}\alpha \sum_{j} u_\alpha(j)\mu_j$ (see,  e.g.,  \cite[(7.5)]{F59}),  we can conclude that 
\begin{equation}\label{eq:79}
{\frac{\beta}{2}}\cdot F^+(\infty)+\sum_{k\geq 0}(F(\infty)-F(k))\nu_k +\gamma\cdot F(\infty)=0.  
\end{equation}
Based on the results obtained in the previous two steps,  we can establish that $F\in \cD(\sL)$.  

Finally,  we prove that $F$ is the unique solution of \eqref{eq:78}.  
By Lemma~\ref{LM71} and \cite[Theorem~9.1]{F59},  it follows that 
\[
	\alpha R^\text{min}_\alpha f - Q(R^\text{min}_\alpha f)=f.  
\]
According to \cite[Theorem~7.1]{F59},  we have
\[
\alpha u_\alpha -Q u_\alpha=0.  
\]	
Since $F=R^\text{min}_\alpha f+M(f)\cdot u_\alpha\in \cD(\sL)$,  we can conclude that
\[
	\alpha F-\sL F=\alpha F-QF=f.  
\]
It remains to verify the uniqueness of the solutions of \eqref{eq:78}.  Argue by contradiction and suppose that $F_0\in \cD(\sL)$ with $Q F_0=\alpha F_0$ and $F_0\neq 0$.  By using \cite[Theorem~7.1]{F59},  it can be deduced that $F_0$ must be equal to $K\cdot u_\alpha$ for some constant $K\neq 0$.   However,  since $u_\alpha$ is increasing, \eqref{eq:79} does not hold for $u_\alpha$ unless $\gamma=\beta=\nu_k=0$.  This implies that $u_\alpha\notin \cD(\sL)$,  leading to a contradiction.  Therefore,  the proof is complete. 
\end{proof}

When $\infty$ is an exit,  we have $\beta=0$ and therefore $|\nu|=\infty$ in Theorem~\ref{THM62}.  {In this case,} the boundary condition in \eqref{eq:74} is written as follows:
\begin{equation}\label{eq:610}
\sum_{k\geq 0}(F(\infty)-F(k))\nu_k +\gamma F(\infty)=0.
\end{equation}
On the other hand,   if $X$ is a Doob process,  meaning that $\beta=0, |\nu|<\infty$ and $|\nu|+\gamma>0$,  we can still follow the same procedure as the third step in the proof of Theorem~\ref{THM73} to derive \eqref{eq:610} for every $F=R_\alpha f$.  Particularly,  with regards to the minimal $Q$-process, it satisfies the Dirichlet boundary condition $F(\infty)=0$.  

\section{Symmetrizable birth and death processes}\label{SEC5}

Before addressing our probabilistic construction for general Feller $Q$-processes,  we will particularly concentrate on all symmetrizable ones in this section.  It is noteworthy that all symmetrizable $Q$-processes, except for the minimal process, are not Doob processes. Moreover, Theorem~\ref{THM25} {implies} the assumption that $\infty$ is regular.

Recall that $X^*$ represents the $(Q,1)$-process that corresponds to the basic Dirichlet form $(\sA^*,\sG^*)$,  which is regular on $L^2(\bN\cup\{\infty\}, \mu)$; see Section~\ref{SEC23}.  Since $\infty$ is regular,  $f(\infty)$ exists for every function $f\in \sG^*$.  This implies that $\sG^*\subset C(\bN\cup \{\infty\})$.   Let $\kappa\in [0,\infty)$ be a constant.  The following lemma is straightforward; see,  e.g.,  \cite[\S6.1]{FOT11}. 

\begin{lemma}\label{LM51}
Assume that $\infty$ is regular.  Let $\kappa\in [0,\infty)$.  Then, the quadratic form
\[
\begin{aligned}
&\sF^\kappa:=\sG^*,  \\
&\sE^\kappa(f,g):=\sA^*(f,g)+\kappa \cdot f(\infty)g(\infty),\quad f,g\in \sF^\kappa
\end{aligned}
\]
is a regular Dirichlet form on $L^2(\bN\cup\{\infty\},\mu)$.  
\end{lemma}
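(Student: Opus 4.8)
The plan is to verify the three defining properties of a regular Dirichlet form for $(\sE^\kappa,\sF^\kappa)$ by leveraging the fact that these properties are already known for the basic Dirichlet form $(\sA^*,\sG^*)$. First I would check the Markovian property: for $f\in\sF^\kappa$, the unit contraction $\tilde f:=(0\vee f)\wedge 1$ lies in $\sG^*$ because $(\sA^*,\sG^*)$ is a Dirichlet form, and since $f(\infty)$ exists (regularity of $\infty$ gives $\sG^*\subset C(\bN\cup\{\infty\})$) we have $\tilde f(\infty)=(0\vee f(\infty))\wedge 1$, so $|\tilde f(\infty)|\le|f(\infty)|$ and hence $\sE^\kappa(\tilde f,\tilde f)=\sA^*(\tilde f,\tilde f)+\kappa\,\tilde f(\infty)^2\le\sA^*(f,f)+\kappa f(\infty)^2=\sE^\kappa(f,f)$. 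Positivity and symmetry of $\sE^\kappa$ are immediate from those of $\sA^*$ together with $\kappa\ge 0$.

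Next I would establish closedness. The added term $\kappa f(\infty)g(\infty)$ is a bounded perturbation in the right sense: one shows that $f\mapsto f(\infty)$ is continuous on $(\sG^*,\sE^*_1)$, where $\sE^*_1(f,f)=\sA^*(f,f)+\|f\|^2_{L^2(\mu)}$. This should follow from a standard estimate bounding $|f(\infty)-f(k)|$ by $\sqrt{2\sA^*(f,f)}\cdot\sqrt{c_\infty-c_k}$ (a Cauchy–Schwarz argument using the definition of $\sA^*$ and the scale function), combined with the fact that $f\in L^2(\bN\cup\{\infty\},\mu)$ and $\mu(\bN)<\infty$ to control some $f(k)$. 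Given this continuity, if $(f_n)$ is $\sE^\kappa_1$-Cauchy, then it is $\sA^*_1$-Cauchy, hence converges in $(\sG^*,\sA^*_1)$ to some $f\in\sG^*$, and $f_n(\infty)\to f(\infty)$, so $f_n\to f$ in $\sE^\kappa_1$ as well. Thus $(\sE^\kappa,\sF^\kappa)$ is closed with the same domain $\sG^*$.

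Finally, regularity: one needs a core, i.e. a subset of $\sG^*\cap C_c(\bN\cup\{\infty\})$... actually $C_c(\bN\cup\{\infty\})=C(\bN\cup\{\infty\})$ here since the space is compact, so the requirement is that $\sG^*$ is dense both in $C(\bN\cup\{\infty\})$ (uniform norm) and in $\sF^\kappa$ (the $\sE^\kappa_1$-norm). Since $(\sA^*,\sG^*)$ is regular, $\sG^*$ is uniformly dense in $C(\bN\cup\{\infty\})$; this is unchanged. For density in the $\sE^\kappa_1$-norm: take any core $\sC$ of $(\sA^*,\sG^*)$ that is $\sA^*_1$-dense in $\sG^*$; using the continuity of $f\mapsto f(\infty)$ proved above, $\sA^*_1$-convergence implies $\sE^\kappa_1$-convergence, so $\sC$ remains a core for $(\sE^\kappa,\sF^\kappa)$.

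The main obstacle I expect is the continuity of the evaluation functional $f\mapsto f(\infty)$ with respect to $\sE^*_1$; everything else reduces mechanically to the known properties of $(\sA^*,\sG^*)$ once that estimate is in hand. This is exactly the point where the hypothesis that $\infty$ is regular (so $c_\infty<\infty$ and $\mu(\bN)<\infty$) is essential, and it is the estimate one should record carefully. Alternatively, one may simply cite \cite[\S6.1]{FOT11} or \cite[Theorem~6.55]{C04}, where perturbations of regular Dirichlet forms by positive measures concentrated at a point (here the Dirac mass $\kappa\delta_\infty$) are treated in general, which is presumably the intended short route given the phrase ``straightforward''.
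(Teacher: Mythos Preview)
Your proposal is correct and in fact supplies far more than the paper does: the paper offers no proof at all, merely stating that the lemma ``is straightforward; see,  e.g.,  \cite[\S6.1]{FOT11}'' and leaving it at that. Your closing remark about citing \cite[\S6.1]{FOT11} for perturbations of regular Dirichlet forms by a killing measure (here $\kappa\,\delta_\infty$) is precisely the route the paper takes, so you have both anticipated the intended argument and fleshed out the details the paper omits.
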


We denote the associated Markov process of $(\sE^\kappa, \sF^\kappa)$ by $X^\kappa$.  Obviously,  $X^\kappa=X^*$ for $\kappa=0$.  When $\kappa>0$,  this process can be obtained by killing $X^*$ at a certain flying time. 
{The constant $1/\kappa$ quantifies the mean of the random variable with an exponential distribution in the expression of this killing time, as seen in \eqref{eq:13}. } The minimal $Q$-process may be regarded as the uncovered case $\kappa=\infty$. 

Mimicking the proof of Proposition~\ref{PRO23},  we can easily demonstrate that 
\begin{equation}\label{eq:52}
	\mathbf{P}_i(X^\kappa_t=\infty)=0,\quad t\geq 0,  i\in \bN,
\end{equation}
which implies that
\begin{equation}\label{eq:51}
	p^\kappa_{ij}(t):=\mathbf{P}_i(X^\kappa_t=j),\quad t\geq 0,  i,j\in \bN
\end{equation}
defines a symmetrizable $Q$-process.  

\begin{lemma}
Let $(p^\kappa_{ij}(t))_{i,j\in \bN}$ be the $Q$-process defined as \eqref{eq:51} for $\kappa\in [0,\infty)$.  Then $X^\kappa$ is the Ray-Knight compactification of $(p^\kappa_{ij}(t))_{i,j\in \bN}$.  Particularly,  $\infty$ is {a non-branching point} for $X^\kappa$. 
\end{lemma}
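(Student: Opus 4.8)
The goal is to show that for the symmetrizable $Q$-process $(p^\kappa_{ij}(t))$ defined via the Dirichlet form $(\sE^\kappa,\sF^\kappa)$, its Ray-Knight compactification (built from the Ray cone generated by the $1$-supermedian functions $R_1 1_{\{i\}}$, $i\in\bN\cup\{\partial\}$, and $u_1$) coincides with $X^\kappa$ itself, and in particular $\infty$ is non-branching. The plan is to verify that $X^\kappa$ already has all the properties that the Ray-Knight compactification provides, so that by the uniqueness of the Ray-Knight compactification (given the choice of $G$, cf.\ Theorem~\ref{THM32} and the remark following it) the two must agree.

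First I would record that $X^\kappa$, being the Hunt process associated with the \emph{regular} Dirichlet form $(\sE^\kappa,\sF^\kappa)$ on $L^2(\bN\cup\{\infty\},\mu)$, is already a c\`adl\`ag strong Markov process on the compact metric space $\bN\cup\{\infty\}\cup\{\partial\}$ whose restriction to $\bN$ has transition matrix $(p^\kappa_{ij}(t))$, by \eqref{eq:51} and \eqref{eq:52}. Next I would check that $X^\kappa$ is in fact a Feller process on $\bN\cup\{\infty\}$: its resolvent is characterized by a triple $(\nu,\gamma,\beta)$ in the sense of Appendix~\ref{APPB}, and for a symmetrizable $Q$-process the discussion in Section~\ref{SEC23-2} (made rigorous in Theorem~\ref{THM73}) shows $\nu=0$ and $\beta>0$ (since $\infty$ is regular, as forced by Theorem~\ref{THM25}), so condition \eqref{eq:71} of Lemma~\ref{LM62} holds and $X^\kappa$ is a Feller $Q$-process. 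Hence by Corollary~\ref{COR52} it has no branching points at all, so once we know it \emph{is} the Ray-Knight compactification, $\infty$ is automatically non-branching.

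It then remains to identify $X^\kappa$ with the Ray-Knight compactification of $(p^\kappa_{ij}(t))$. For this I would invoke the uniqueness built into the construction: the enlarged space $\bF$ and Ray resolvent $(\bar R_\alpha)$ depend only on the Ray cone $\cS(G)$, and by Lemma~\ref{LM33}~(4) applied to this $Q$-process we have $\bF=\bN\cup\{\infty\}\cup\{\partial\}$ with its usual (Alexandroff) topology, which is exactly the state space of $X^\kappa$. The functions in $G$, namely $R_1 1_{\{i\}}$ and $u_1$, all extend continuously to $\bN\cup\{\infty\}\cup\{\partial\}$ (for $R_11_{\{i\}}$ this follows since $X^\kappa$ is Feller and $1_{\{i\}}\in C(\bN\cup\{\infty\})$ after noting the point $\infty$ is $\mu$-polar, and for $u_1$ by Lemma~\ref{LM31}), and the Ray resolvent agrees with the resolvent of $X^\kappa$ on $\bN\cup\{\partial\}$ by Theorem~\ref{THM32}~(4). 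Since two c\`adl\`ag strong Markov processes on the same space with resolvents agreeing on the dense Borel set $\bN\cup\{\partial\}$, started from the same initial law, have the same law (both are honest/standard extensions and $\infty$ is approached only from within $\bN$ by Corollary~\ref{COR35}), we conclude $X^\kappa$ is the Ray-Knight compactification.

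The main obstacle I anticipate is the bookkeeping around the two different roles of $\partial$ and $\infty$: in the Dirichlet-form picture $\partial$ is identified with the Alexandroff point (as in Remark~\ref{RM32}) while in the Ray-Knight picture $\partial$ is a separate trap and $\infty$ is the Alexandroff point; one must be careful that the two compactifications really produce the \emph{same} topological space with $\infty$ a genuine state (not polar) for $\kappa<\infty$, which is exactly what \eqref{eq:52} together with the Feller property guarantees. Once that identification is pinned down, everything else is a direct appeal to Lemma~\ref{LM33}, Corollary~\ref{COR52}, and the uniqueness clause of the Ray-Knight construction.
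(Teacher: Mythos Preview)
Your overall strategy---show that $X^\kappa$ is already a Feller process on $\bN\cup\{\infty\}\cup\{\partial\}$, then identify its resolvent with the Ray resolvent $\bar R_\alpha$---is the same as the paper's.  However, the paper's execution is considerably more direct, and your argument has two weaknesses worth flagging.

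First, your route to the Feller property of $X^\kappa$ is circuitous and leans on forward references.  You appeal to Theorem~\ref{THM73} and Lemma~\ref{LM62} to conclude that the triple for $X^\kappa$ satisfies $\nu=0$, $\beta>0$, hence \eqref{eq:71} holds.  But Theorem~\ref{THM73} does not compute the triple; it only says every symmetrizable $Q$-process equals some $X^\kappa$.  Extracting the triple would require the explicit resolvent computation in Theorem~\ref{THM61} or a direct calculation, neither of which you do.  The paper bypasses all of this with a one-line observation: since $\infty$ is regular, $\mu(\bN)<\infty$, so $C(\bN\cup\{\infty\})\subset L^2(\bN\cup\{\infty\},\mu)$, whence $R^\kappa_\alpha C(\bN\cup\{\infty\})\subset \sF^\kappa=\sG^*\subset C(\bN\cup\{\infty\})$.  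That is the only continuity fact needed.

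Second, your final identification step is the real content of the lemma, and you leave it vague.  The sentence ``two c\`adl\`ag strong Markov processes on the same space with resolvents agreeing on the dense Borel set $\bN\cup\{\partial\}$ \dots\ have the same law'' is not a general fact; it requires precisely the continuity at $\infty$ you have just arranged.  The paper makes this explicit: for $\bar f\in C(\bN\cup\{\infty\})$ one has $\bar R_\alpha\bar f(i)=R^\kappa_\alpha\bar f(i)$ for $i\in\bN$ by Theorem~\ref{THM32}(4) and \eqref{eq:52}, and since both sides are continuous on $\bN\cup\{\infty\}$ (the left by the Ray construction, the right by the Dirichlet-form inclusion above), letting $i\to\infty$ gives equality at $\infty$.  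This pins down $\bar R_\alpha$ on all of $\bF$ and hence identifies $\bar X$ with $X^\kappa$; non-branching at $\infty$ then follows immediately without invoking Corollary~\ref{COR52}.
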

\begin{proof}
With an abuse of notation, we write $C(\bN\cup\{\infty\})=\{f\in C(\bF): f(\partial)=0\}$.  This notation allows us to express any function $f\in C(\bF)$ as a decomposition:
\[
	f=(f-f(\partial)\cdot 1_{\bF})+f(\partial) \cdot 1_\bF=: f_0+ f(\partial) \cdot 1_\bF,
\]
where $f_0\in C(\bN\cup\{\infty\})$ and $f(\partial) \cdot 1_\bF$ is a constant function on $\bF$.  Let $(R^\kappa_\alpha)_{\alpha>0}$ denote the resolvent of $X^\kappa$.   Since $C(\bN\cup\{\infty\})\subset L^2(\bN\cup\{\infty\}, \mu)$,  it follows that $R^\kappa_\alpha C(\bN\cup\{\infty\})\subset \sF^\kappa \subset C(\bN\cup\{\infty\})$.  

Denote by $\bar{X}$ the Ray-Knight compactification of $(p^\kappa_{ij}(t))_{i,j\in \bN}$, and by $\bar{R}_\alpha$ its resolvent on $C(\bF)$.  To prove the statement,  it suffices to show that for any $\bar f\in C(\bN\cup\{\infty\})$, 
\[
	\bar{R}_\alpha \bar f(i)=R^\kappa_\alpha \bar{f}(i),  \quad i\in \bN\cup\{\infty\}.   
\]
Let $f:=\bar{f}|_\bN$.  For each $i\in \bN$,  it follows from Theorem~\ref{THM32}~(4),  Lemma~\ref{LM33} and \eqref{eq:52} that
\begin{equation}\label{eq:53}
\bar{R}_\alpha \bar f(i)=\int_0^\infty e^{-\alpha t}dt\sum_{j\in \bN} p^\kappa_{ij}(t)f(j)=\mathbf{E}_i \int_0^\infty e^{-\alpha t} \bar f(X^\kappa_t)dt=R^\kappa_\alpha \bar{f}(i).
\end{equation}
Note that $\bar{R}_\alpha \bar{f}\in C(\bF)$ and $R^\kappa_\alpha \bar{f}\in C(\bN\cup\{\infty\})$.  Letting $i\rightarrow \infty$ in \eqref{eq:53},  we further obtain $\bar{R}_\alpha \bar f(\infty)=R^\kappa_\alpha \bar{f}(\infty)$.  This completes the proof. 
\end{proof}

 In fact,  all symmetrizable $Q$-processes are given by \eqref{eq:51}.  That is the following.

\begin{theorem}\label{THM73}
Assume that $\infty$ is regular. 
Let $(p_{ij}(t))_{i,j\in\bN}$ be a $Q$-process {which is not minimal}.  Then,  it is symmetrizable if and only if there exists a constant $\kappa\in [0,\infty)$ such that 
\[
	p_{ij}(t)=p_{ij}^\kappa(t),\quad t\geq 0, i,j\in \bN,
\]
where $(p_{ij}^\kappa(t))_{i,j\in \bN}$ is defined as \eqref{eq:51}.  Particularly,  the branching set for $(p_{ij}(t))_{i,j\in \bN}$ is empty. 
\end{theorem}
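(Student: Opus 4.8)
The plan is to prove the two implications separately. The ``if'' direction is essentially already in hand: for $\kappa\in[0,\infty)$, the process $X^\kappa$ is associated with the Dirichlet form $(\sE^\kappa,\sF^\kappa)$ of Lemma~\ref{LM51}, which is symmetric with respect to $\mu$ on $L^2(\bN\cup\{\infty\},\mu)$; hence $(p^\kappa_{ij}(t))$ is $\mu$-symmetric, and since by the previous lemma it is a $Q$-process (with $\infty$ non-branching), the forward implication together with the last sentence about the branching set follows. So the real content is the ``only if'' direction: given a non-minimal symmetrizable $Q$-process $(p_{ij}(t))$, produce the constant $\kappa$.

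For the ``only if'' direction I would argue as follows. First, invoke the standing fact (cited after \eqref{eq:03}) that any symmetrizing measure of $(p_{ij}(t))$ must be a symmetrizing measure of $Q$, hence is a multiple of the speed measure $\mu$ in \eqref{eq:03}; so $(p_{ij}(t))$ is $\mu$-symmetric. Next, form its Ray--Knight compactification $\bar X$ on $\bF=\bN\cup\{\infty\}\cup\{\partial\}$ via Theorem~\ref{THM32} and Lemma~\ref{LM33}. Since the process is not minimal and $\infty$ is regular, I claim $\infty$ is non-branching for $\bar X$: if it were branching, Theorem~\ref{THM41} would force $\bar X$ to be a $(Q,\pi)$-Doob process, and a Doob process other than the minimal one has a non-trivial returning distribution $\pi$ on $\bN\cup\{\partial\}$, which introduces genuinely asymmetric non-local jumps from $\infty$ into $\bN$ (there are no non-local jumps from $\bN$ to $\infty$), contradicting $\mu$-symmetry; one can make this rigorous using \eqref{eq:42} and the detailed-balance relations $\mu_i\bar R_\alpha 1_{\{j\}}(i)=\mu_j\bar R_\alpha 1_{\{i\}}(j)$ together with $\mu(\{\infty\})=0$ and the asymmetric role of $\nu$ versus $\mu$ in the resolvent formula \eqref{eq:B2}. (Alternatively, one may appeal directly to Theorem~\ref{THM62}: the generator boundary condition \eqref{eq:74} carries a non-local term $\sum_k(F(\infty)-F(k))\nu_k$ which is incompatible with time-reversal symmetry unless $\nu\equiv 0$; this both kills branching and forces $|\nu|=0$.)

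Having established that $\bar X$ is a Feller $Q$-process with $|\nu|=0$, apply Theorem~\ref{THM62}: its resolvent is determined by a triple $(\nu,\gamma,\beta)=(0,\gamma,\beta)$ with $\beta>0$ (by \eqref{eq:71}, since $|\nu|=0$), and the generator satisfies the Robin-type boundary condition $\tfrac{\beta}{2}F^+(\infty)+\gamma F(\infty)=0$. Set $\kappa:=\gamma/\beta\in[0,\infty)$. It remains to identify $\bar X$ with $X^\kappa$. Both are $\mu$-symmetric Markov processes on $\bN\cup\{\infty\}$ whose killed processes at $\infty$ coincide with the minimal $Q$-process (Section~\ref{SEC23} and \eqref{eq:30}); by the general theory of Dirichlet forms it therefore suffices to check that $\bar X$ is associated with $(\sE^\kappa,\sF^\kappa)$. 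Its domain is $\sG^*$ (both processes agree off $\infty$ with the $(Q,1)$-process, whose form is the basic form $(\sA^*,\sG^*)$), and the killing rate at $\infty$ is $\kappa=\gamma/\beta$ up to the normalization of the local time $(L_t)$, exactly as in the discussion around \eqref{eq:13}; equivalently, compare the resolvent formula \eqref{eq:75} specialized to $\nu=0$ with the resolvent of $X^\kappa$ computed from $(\sE^\kappa,\sF^\kappa)$ and read off $\kappa$. Uniqueness of $\kappa$ is immediate since the boundary condition $\tfrac{\beta}{2}F^+(\infty)+\gamma F(\infty)=0$ determines the ratio $\gamma/\beta$.

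The main obstacle I expect is the clean identification of the normalization constant relating $\kappa=\gamma/\beta$ to the exponential parameter in \eqref{eq:13} — i.e.\ matching the PCAF/local-time normalization implicit in Theorem~\ref{THM62} with the one used to define $X^\kappa$ via $(\sE^\kappa,\sF^\kappa)$. This is not conceptually deep but requires care with the factor of $2$ appearing throughout (note $\tfrac12\bD$, $\tfrac12 D_\mu F^+$, and the $\tfrac{\beta}{2}$ in \eqref{eq:74}); the cleanest route is probably to avoid local times altogether and simply equate the two explicit resolvents on $C(\bN\cup\{\infty\})$, since both are available in closed form.
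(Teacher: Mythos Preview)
Your overall strategy is sound and genuinely different from the paper's proof. The paper argues non-branching by restricting the Ray process to $D=\bN\cup\{\partial\}$, invoking that a $\mu$-symmetric right process is special standard (hence quasi-left-continuous), and deriving a contradiction at the first flying time; it then identifies the Dirichlet form via the killing-measure/resurrection machinery and the uniqueness of one-point extensions of $X^{\text{min}}$ (Chen--Fukushima, Theorem~7.5.4). Your route---show directly from the resolvent formula that a non-minimal Doob process cannot satisfy $\mu_i R_\alpha(i,\{j\})=\mu_j R_\alpha(j,\{i\})$, then read off the triple $(\nu,\gamma,\beta)$ and force $\nu=0$ by the same detailed-balance computation, and finally match resolvents with $X^\kappa$---is more elementary and self-contained, trading the Dirichlet-form structure theorems for explicit resolvent algebra. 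Both work; the paper's approach generalizes more readily beyond the birth--death setting, while yours stays closer to the concrete objects at hand.

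There is, however, one logical slip and one genuine omission. The slip: your parenthetical ``alternative'' invokes Theorem~\ref{THM62} to ``kill branching'', but that theorem is stated only for Feller $Q$-processes, i.e.\ it already presupposes $\infty\notin B$. So the alternative cannot replace the Doob-asymmetry argument; it can only be used \emph{after} non-branching is established to extract the triple. The omission: you write ``having established that $\bar X$ is a Feller $Q$-process with $|\nu|=0$'', but the primary argument you give only yields non-branching, not $|\nu|=0$. You still need to carry out the detailed-balance computation for the general resolvent \eqref{eq:B2}: writing $\mu_i\Psi_{ij}(\alpha)=\mu_j\Psi_{ji}(\alpha)$ and using the $\mu$-symmetry of $\Phi$, one reduces to $u_\alpha(i)\,g(j)=u_\alpha(j)\,g(i)$ for all $i,j$, where $g=R^{\text{min}}_\alpha(\nu/\mu)$. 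This forces $g=c\,u_\alpha$; but $g$ lies in the range of $R^{\text{min}}_\alpha$ while $u_\alpha$ solves $(\alpha-Q)u_\alpha=0$, so $c=0$, hence $\nu=0$. Once that is written down, your resolvent-matching with $X^\kappa$ (using the computation that later appears in the proof of Theorem~\ref{THM61}, equations \eqref{eq:62}--\eqref{eq:64}) goes through, and the factor-of-two worry is indeed irrelevant if you compare resolvents rather than local times.
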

\begin{proof}
Only the necessity needs to be proved.  Let $\bar{X}$ be the Ray-Knight compactification of a symmetrizable $Q$-process $(p_{ij}(t))_{i,j\in\bN}$.  We will first establish that the branching set for $\bar{X}$ is empty,  meaning that $\infty\notin B$.  Assume, by contradiction, that $D=\bN\cup\{\partial\}$.  According to \cite[Theorem~9.13]{S88},  the restriction $\bar{X}'$ of $\bar{X}$ to $D$ is a Borel right process,  where $D$ is endowed with the relative topology induced by $\bF$.  Define $\eta'_n:=\inf\{t>0: \bar{X}'_t=n\}$ and $\eta':=\lim_{n\rightarrow \infty}\eta'_n$.  Clearly, the killed process of $\bar{X}'$ at $\eta'$ is the minimal $Q$-process.  Since $(p_{ij}(t))_{i,j\in\bN}$ is symmetric with respect to $\mu$,  it follows that $\bar{X}'$ is also symmetric with respect to $\mu$.  
Thus, by \cite[Corollary~3.1.14]{CF12}, $\bar{X}'$ is a special, Borel standard process.  Particularly, $\bar{X}'$ is quasi-left-continuous.  Let $\zeta'$ be the lifetime of $\bar{X}'$.  The quasi-left-continuity of $\bar{X}'$ implies that on $\{\eta'<\zeta'\}$,  we have
\[
	\bar{X}'_{\eta'_n}\rightarrow \bar{X}'_{\eta'}\in \bN,
\]
while $\bar{X}'_{\eta'_n}=n$ and $\lim_{n\rightarrow \infty} n\notin\bN$.  Therefore, we must have $\eta'=\zeta'$,  so that $\bar{X}'$ (and hence $\bar{X}$) is identical to the minimal $Q$-process.  This contradicts the assumption that  $(p_{ij}(t))_{i,j\in\bN}$ is not minimal. 

Based on the above argument,  $\bar{X}$ is a Feller process on $\bN\cup\{\infty\}\cup \{\partial\}$.  According to Lemma~\ref{LM33}~(1),  $\bar{X}$ is symmetric with respect to $\mu$.  Denote by $(\bar{\sE},\bar{\sF})$ its quasi-regular Dirichet form on $L^2(\bN\cup\{\infty\}, \mu)$.  Since the part Dirichlet form of $(\bar{\sE},\bar{\sF})$ on $\bN$ is {equal to} the minimal Dirichlet form $(\sA^\text{min}, \sG^\text{min})$,  it follows that $(\bar{\sE},\bar{\sF})$ admits no killing inside $\bN$.  In other words,  the killing measure $\bar{k}$ of $(\bar{\sE},\bar{\sF})$ must be supported on $\{\infty\}$.  Thus, there exists a constant $\kappa \in [0,\infty)$ such that $\bar{k}=\kappa \cdot \delta_\infty$.  Handling the resurrection on $(\bar{\sE},\bar{\sF})$, we can obtain the resurrected Dirichlet form $(\bar{\sE}^\text{res}, \bar{\sF}^\text{res})$ for  $(\bar{\sE},\bar{\sF})$,  as described in \cite[Theorem~5.2.17]{CF12}.  Note that the associated Markov process $\bar{X}^\text{res}$ of $(\bar{\sE}^\text{res}, \bar{\sF}^\text{res})$ remains symmetric with respect to $\mu$ and admits no killing inside $\bN\cup\{\infty\}$.  Additionally,  the part process of $\bar{X}^\text{res}$ on $\bN$ is the minimal $Q$-process $X^\text{min}$. In other words,  $\bar{X}^\text{res}$ is a one-point extension of $X^\text{min}$ in the sense of \cite[Definition~7.5.1]{CF12}.  Clearly, the $(Q,1)$-process $X^*$ is also a one-point extension of $X^\text{min}$.  Applying \cite[Theorem~7.5.4]{CF12},  one can obtain that $(\bar{\sE}^\text{res}, \bar{\sF}^\text{res})=(\sA^*, \sG^*)$.  In view of \cite[Theorem~5.2.17]{CF12} again,  we eventually conclude that $(\bar{\sE},\bar{\sF})=(\sE^\kappa,\sF^\kappa)$, where the latter Dirichlet form is defined in Lemma~\ref{LM51}.  This completes the proof. 
\end{proof}

\section{Birth and death processes with only big non-local jumps}

Let $(p_{ij}(t))_{i,j\in \bN}$ be a Feller $Q$-process with the triple $(\nu, \gamma, \beta)$,  as characterized in Theorem~\ref{THM62}. 
In this section,  we will present a probabilistic construction of $(p_{ij}(t))_{i,j\in \bN}$ under the condition $|\nu|:=\sum_{k\geq 0}\nu_k<\infty$.  As stated in Theorem~\ref{THMB1}~(3),  if $\beta=0$,  then $(p_{ij}(t))_{i,j\in \bN}$ is a Doob process,  as discussed in Section~\ref{SEC4}.  The primary focus of this section is the case $\beta>0$.  According to \eqref{eq:B4},  the additional condition that $\infty$ is regular is still required.




\begin{theorem}\label{THM61}
{Assume that $\infty$ is regular.  Consider a Feller $Q$-process $(p_{ij}(t))_{i,j\in \bN}$ with the triple $(\nu, \gamma,\beta)$, where $|\nu|<\infty$ and $\beta>0$.  Then, $(p_{ij}(t))_{i,j\in \bN}$ is the piecing out of $X^\kappa$ with respect to $\pi$,  where $X^\kappa$ is the symmetrizable $Q$-process described in Section~\ref{SEC5} and 
\begin{equation}\label{eq:61}
	\kappa=\frac{|\nu|+\gamma}{\beta},\quad \pi(\{k\})=\frac{\nu_k}{|\nu|+\gamma}, \; k\in \bN,\quad \pi(\{\partial \})=\frac{\gamma}{|\nu|+\gamma}.
\end{equation}}
\end{theorem}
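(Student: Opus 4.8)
The plan is to compute the resolvent of the piecing-out process $\tilde X := (X^\kappa, \pi)$ explicitly and verify that it coincides with the resolvent matrix of $(p_{ij}(t))_{i,j\in\bN}$ given by \eqref{eq:B2} for the triple $(\nu,\gamma,\beta)$ in \eqref{eq:61}. Since Corollary~\ref{COR52} and Theorem~\ref{THMB1}~(3) show that a Feller $Q$-process is determined by its resolvent matrix (equivalently, by its triple up to a multiplicative constant), this identification suffices. So the first step is to set up the Ikeda--Nagasawa--Watanabe piecing out of $X^\kappa$ with respect to $\pi$, exactly as in the proof of Theorem~\ref{THM41}: denoting by $(R^\kappa_\alpha)$ the resolvent of $X^\kappa$ and by $\tau$ the first renewal time (here $\tau$ is the killing time of $X^\kappa$, which by the discussion around \eqref{eq:13} occurs at the flying time $\inf\{t>0: L_t>\xi\}$ with $\xi$ exponential of mean $1/\kappa$), one uses the strong Markov property and the Dynkin-type formula to write
\begin{equation}\label{eq:planA}
\tilde R_\alpha(i,\{j\}) = R^\kappa_\alpha(i,\{j\}) + \tilde{\mathbf E}_i\!\left[e^{-\alpha\tau}\tilde R_\alpha(\tilde X_\tau, \{j\})\right],
\end{equation}
and then evaluates the second term as $\mathbf E_i[e^{-\alpha\zeta^\kappa}]\cdot \sum_{k\in\bN\cup\{\partial\}}\pi_k\tilde R_\alpha(k,\{j\})$, where $\zeta^\kappa$ is the lifetime of $X^\kappa$.

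The second step is to express the two ingredients $R^\kappa_\alpha(i,\{j\})$ and $\mathbf E_i[e^{-\alpha\zeta^\kappa}]$ in terms of the data attached to the $(Q,1)$-process $X^*$ and the minimal process $X^{\min}$. Because $X^\kappa$ is $X^*$ killed via the local time at $\infty$ at rate $\kappa$, a first-passage/last-exit decomposition at $\infty$ (or equivalently the Dirichlet-form perturbation of Lemma~\ref{LM51}) gives $R^\kappa_\alpha$ as a rank-one perturbation of $R^*_\alpha$, and similarly $\mathbf E_i[e^{-\alpha\zeta^\kappa}]$ is a multiple of $u_\alpha(i)$. Concretely I expect a formula of the shape
\begin{equation}\label{eq:planB}
R^\kappa_\alpha(i,\{j\}) = R^{\min}_\alpha(i,\{j\}) + u_\alpha(i)\cdot\frac{R^{\min}_\alpha(\cdot,\{j\})\text{-term evaluated via }\mu}{\kappa\,(\text{const}) + (\text{const})},
\end{equation}
matching the $\kappa=\infty$ case with the minimal process and the $\kappa=0$ case with $X^*$; the relevant constants are exactly those appearing in Feller's identities $u_\alpha^+(\infty)=2\alpha\sum_j u_\alpha(j)\mu_j$ and $(R^{\min}_\alpha f)^+(\infty) = -2\sum_j u_\alpha(j)f(j)\mu_j$ used in the proof of Theorem~\ref{THM62}. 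Solving the linear system obtained by integrating \eqref{eq:planA} against $\pi$ (as in the passage from \eqref{eq:41} to \eqref{eq:42}) then yields a closed form for $\tilde R_\alpha(i,\{j\})$.

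The third step is the bookkeeping: substitute $\kappa = (|\nu|+\gamma)/\beta$, $\pi_k = \nu_k/(|\nu|+\gamma)$, $\pi_\partial = \gamma/(|\nu|+\gamma)$ into the closed form and check, after clearing denominators, that it is literally \eqref{eq:B2}. The factor $|\nu|+\gamma$ should cancel between $\kappa$ and $\pi$, which is precisely why the construction is insensitive to the overall normalization of the triple and why the statement is phrased "up to a multiplicative constant"; the condition $\beta>0$ ensures $\kappa<\infty$ so that $X^\kappa$ genuinely has a killing at $\infty$ to piece out from, and $|\nu|<\infty$ with $|\nu|+\gamma>0$ (forced by \eqref{eq:71} together with $\beta>0$, actually automatic here) ensures $\pi$ is a bona fide probability measure on $\bN\cup\{\partial\}$. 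Finally one invokes Theorem~\ref{THMB1}~(3) (and Lemma~\ref{LM62}) to conclude that the piecing-out process, having the prescribed resolvent matrix, is the given Feller $Q$-process.

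The main obstacle I anticipate is the second step: obtaining the correct explicit expressions \eqref{eq:planB} for $R^\kappa_\alpha$ and for $\mathbf E_i[e^{-\alpha\zeta^\kappa}]$. One must carefully relate the local-time killing rate $\kappa$ to the analytic normalization in Wang--Yang's triple, and this is where the constants $2\alpha\sum_j u_\alpha(j)\mu_j$ (the $\alpha$-potential of $\delta_\infty$ under $X^*$, up to a factor) enter and must be tracked without error. An alternative that sidesteps some of this is to bypass resolvents and instead identify infinitesimal generators: show directly that the piecing-out process is Feller with generator $Q$ on the domain cut out by the boundary condition \eqref{eq:74} for the triple \eqref{eq:61} — the reflecting term $\beta F^+(\infty)/2$ coming from the reflecting part of $X^\kappa$ at $\infty$, and the jump/killing term $\sum_k(F(\infty)-F(k))\nu_k + \gamma F(\infty)$ coming from the renewal at rate $\propto\kappa$ with redistribution $\pi$ — and then quote Theorem~\ref{THM62}. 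Either route reduces to the same linear-algebra identity; I would present the resolvent computation since it parallels the already-established proof of Theorem~\ref{THM41} most closely.
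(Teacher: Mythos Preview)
Your proposal is correct and follows essentially the same route as the paper: write the piecing-out resolvent via the strong Markov property and Dynkin's formula (your \eqref{eq:planA} is the paper's analogue of \eqref{eq:42}, yielding $\tilde R_\alpha f(i)=R^\kappa_\alpha f(i)+\frac{\pi(R^\kappa_\alpha f)}{1-\pi(v_\alpha)}v_\alpha(i)$ with $v_\alpha(i)=c_\alpha u_\alpha(i)$), express $R^\kappa_\alpha$ as a rank-one perturbation of $R^{\min}_\alpha$ via Dynkin $R^\kappa_\alpha f=R^{\min}_\alpha f+R^\kappa_\alpha f(\infty)u_\alpha$, compute the constants, and match against \eqref{eq:B2}. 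The one place the paper is slightly more direct than your sketch is in computing $R^\kappa_\alpha f(\infty)$: rather than invoking Feller's boundary identities from the proof of Theorem~\ref{THM62}, it uses the Dirichlet-form identity $\mu(f)=\sE^\kappa_\alpha(R^\kappa_\alpha f,1)=\kappa R^\kappa_\alpha f(\infty)+\alpha\mu(R^\kappa_\alpha f)$ together with $1-\alpha R^{\min}_\alpha 1=u_\alpha$ to get $R^\kappa_\alpha(\infty,\{j\})=\mu_j u_\alpha(j)/(\kappa+\alpha\mu(u_\alpha))$ and hence $c_\alpha=\kappa/(\kappa+\alpha\mu(u_\alpha))$; this is exactly the ``Dirichlet-form perturbation of Lemma~\ref{LM51}'' option you mention, and it keeps the constant-tracking clean.
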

\begin{proof}
Denote by $\tilde{X}$ the piecing out of $X^\kappa$ with respect to $\pi$, where $\kappa$ and $\pi$ are given in \eqref{eq:61}.  Let $R^\kappa_\alpha$ and $\tilde{R}_\alpha$ be the resolvents of $X^\kappa$ and $\tilde{X}$, respectively.  Define
\[
	R^\kappa_{ij}(\alpha):=R^\kappa_\alpha(i, \{j\}),  \quad \tilde{R}_{ij}(\alpha):=\tilde{R}_\alpha(i, \{j\}),\quad \alpha>0,  i,j\in \bN. 
\]
The $(Q,\nu, \gamma,\beta)$-resolvent matrix is expressed as $\Psi_{ij}(\alpha)$ in \eqref{eq:B2}.  To obtain $\tilde{R}_{ij}(\alpha)=\Psi_{ij}(\alpha)$, we will complete the computation in several steps. 

Firstly, we prove that
\begin{equation}\label{eq:62}
	R^\kappa_\alpha(\infty, \{j\})=\frac{\mu_j u_\alpha(j)}{\kappa+\alpha \mu(u_\alpha)}.  
\end{equation}
To do this,  let $f:=1_{\{j\}}$.  Since $f, 1_{\bN\cup\{\infty\}}\in \sF^\kappa$,  it follows that
\begin{equation}\label{eq:63}
	\mu(f)=\sE^\kappa_\alpha(R^\kappa_\alpha f, 1_{\bN\cup\{\infty\}})=\kappa R^\kappa_\alpha f(\infty)+\alpha \mu(R^\kappa_\alpha f).  
\end{equation}
By the Dynkin's formula, we have
\begin{equation}\label{eq:64}
R^\kappa_\alpha f=R^\text{min}_\alpha f + R^\kappa_\alpha f(\infty) u_\alpha.
\end{equation}
Combining \eqref{eq:63} and \eqref{eq:64},  we obtain
\[
R^\kappa_\alpha f(\infty)=\frac{\mu(f-\alpha R^\text{min}_\alpha f)}{\kappa+\alpha \mu(u_\alpha)}=  \frac{(1_{\bN\cup\{\infty\}}-\alpha R^\text{min}_\alpha 1_{\bN\cup\{\infty\}}, f)_\mu}{\kappa+\alpha \mu(u_\alpha)}.
\]
It remains to note that $1_{\bN\cup\{\infty\}}-\alpha R^\text{min}_\alpha 1_{\bN\cup\{\infty\}}=u_\alpha$, which yields \eqref{eq:62}. 

Next,  following a similar argument to derive \eqref{eq:42}, we can obtain another equality:
\begin{equation}\label{eq:65}
	\tilde{R}_\alpha f(i)=R^\kappa_\alpha f(i)+\frac{\pi(R^\kappa_\alpha f)}{1-\pi(v_\alpha)}\cdot v_\alpha(i).
\end{equation}
In this equality,  $v_\alpha(i):=\mathbf{E}_i^\kappa e^{-\alpha \zeta^\kappa}$,  where $\zeta^\kappa$ represents the lifetime of $X^\kappa$.  Note that the strong Markov property of $X^\kappa$ implies that $v_\alpha(i)=c_\alpha u_\alpha(i)$,  where $c_\alpha:=\mathbf{E}^\kappa_\infty e^{-\alpha \zeta^\kappa}$.  

Finally,  by substituting \eqref{eq:62} and \eqref{eq:64} into \eqref{eq:65} and performing a straightforward computation,  we obtain 
\begin{equation}\label{eq:66}
\tilde{R}_\alpha f(i)=R^\text{min}_\alpha f(i)+u_\alpha(x)\cdot \frac{\mu_j u_\alpha(j)+c_\alpha \pi(R^\text{min}_\alpha f) \cdot \left(\kappa+\alpha \mu(u_\alpha)\right)}{(1-c_\alpha \pi(u_\alpha))(\kappa+\alpha \mu(u_\alpha))}.
\end{equation}
It follows from \eqref{eq:62} that
\[
1-c_\alpha=1-\mathbf{E}^\kappa_\infty e^{-\alpha \zeta^\kappa}=\alpha R^\kappa_\alpha 1_{\bN\cup\{\infty\}}(\infty)=\frac{\alpha \mu(u_\alpha)}{\kappa+\alpha \mu(u_\alpha)}.
\]
Hence, $c_\alpha=\kappa/(\kappa+\alpha \mu(u_\alpha))$.  Substituting this into \eqref{eq:66}, we arrive at
\begin{equation}\label{eq:67}
	\tilde{R}_\alpha f(i)=R^\text{min}_\alpha f(i)+u_\alpha(i)\cdot \frac{\kappa \pi(R^\text{min}_\alpha f)+\mu_j u_\alpha(j)}{\kappa-\kappa \pi(u_\alpha)+\alpha \mu(u_\alpha)}.
\end{equation}
Eventually,  by comparing \eqref{eq:67} with \eqref{eq:B2} and using \eqref{eq:61},  we can conclude that $\tilde{R}_{ij}(\alpha)=\tilde{R}_\alpha f(i)=\Psi_{ij}(\alpha)$.  This completes the proof. 
\end{proof}

\section{Birth and death processes with small non-local jumps}\label{SEC9}

Finally, we turn our attention to the ``pathological" case where $|\nu|=\infty$.  
The resolvent matrix for this case can still be explicitly expressed as \eqref{eq:B2}. 
However, the probabilistic construction outlined in Theorem~\ref{THM61} is no longer applicable.
{The goal of this section is to build a sequence of piecing out $Q$-processes that converges to a pathological one in certain aspects.   But it should be noted that our following strategy can only be applied when $\infty$ is regular, as it necessitates the selection of the strictly positive parameters $\beta_n$. 
} 

More specifically,  let us consider a sequence of positive measures $\nu^n$,  $n\geq 1$,  on $\bN$ such that 
\[
	|\nu^n|:=\sum_{k\geq 0}\nu^n_k<\infty,  \quad 0\leq \nu^n_k\leq \nu_k \text{ and }\nu^n_k\uparrow \nu_k \;(n\uparrow \infty),\; k\geq 0.  
\]
A simple example is $\nu^n_k=\nu_k$ for $k\leq n$ and $\nu^n_k=0$ for $k>n$.  
If $\beta>0$,  we let $\beta_n:=\beta$.  Otherwise,  we select a sequence of positive constants $\beta_n>0$ such that $\beta_n\downarrow 0$. 
Set
\begin{equation}\label{eq:68}
	\kappa_n:=\frac{|\nu^n|+\gamma}{\beta_n},\quad \pi_n(\{k\})=\frac{\nu^n_k}{|\nu^n|+\gamma}, \; k\in \bN,\quad \pi_n(\{\partial \})=\frac{\gamma}{|\nu^n|+\gamma}.
\end{equation}
We first demonstrate that the transition matrix corresponding to $(\nu^n, \gamma,\beta_n)$ converges to $(p_{ij}(t))_{i,j\in \bN}$ in a uniform sense.   

\begin{lemma}\label{LM91}
{Assume that $\infty$ is regular. } Let $(p_{ij}(t))_{i,j\in\bN}$ be a Feller $Q$-process with the triple $(\nu, \gamma,\beta)$ such that $|\nu|=\infty$.  Then, for any $j\in \bN$,
\begin{equation}\label{eq:91}
p^n_{ij}(t)\rightarrow p_{ij}(t),\quad n\rightarrow\infty
\end{equation}
uniformly in $(t,i)\in [0, T]\times \bN$ for any fixed constant $T>0$,  where $(p^n_{ij}(t))_{i,j\in \bN}$ is the piecing out of $X^{\kappa_n}$ with respect to $\pi_n$ and $\kappa_n,\pi_n$ are given by \eqref{eq:68}. 
\end{lemma}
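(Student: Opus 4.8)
The plan is to reduce the statement to a convergence-of-resolvents assertion, exploit the explicit Wang--Yang representation of the resolvent matrix to make that convergence uniform in the spatial variable, and then invoke the Trotter--Kato approximation theorem for Feller semigroups.

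First I would set up notation. By Theorem~\ref{THM61}, $(p^n_{ij}(t))_{i,j\in\bN}$ is exactly the Feller $Q$-process with triple $(\nu^n,\gamma,\beta_n)$; denote by $(P^n_t)_{t\ge0}$ and $(R^n_\alpha)_{\alpha>0}$ its Feller semigroup and resolvent on $C(\bN\cup\{\infty\})$ (available by Corollary~\ref{COR52}), and let $(P_t)$, $(R_\alpha)$ be those of the target process. From \eqref{eq:75} (equivalently from the formula \eqref{eq:B2}), for every $f\in C(\bN\cup\{\infty\})$ there is the representation
\[
R^n_\alpha f = R^{\min}_\alpha f + u_\alpha\, M^n(f),\qquad R_\alpha f = R^{\min}_\alpha f + u_\alpha\, M(f),
\]
where $M^n(f)$ is the scalar
\[
M^n(f)=\frac{\sum_{i\ge0}\nu^n_i\,R^{\min}_\alpha f(i)+\beta_n\sum_{i\ge0}u_\alpha(i)f(i)\mu_i}{\gamma+\sum_{i\ge0}\nu^n_i(1-u_\alpha(i))+\beta_n\alpha\,\mu(u_\alpha)}
\]
and $M(f)$ is the same expression with $(\nu^n,\beta_n)$ replaced by $(\nu,\beta)$. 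The decisive structural point is that $R^{\min}_\alpha$ and $u_\alpha$ do not depend on $n$, so by Lemma~\ref{LM31} one has $\|R^n_\alpha f - R_\alpha f\|_{C(\bN\cup\{\infty\})}=\|u_\alpha\|_\infty\,|M^n(f)-M(f)|\le |M^n(f)-M(f)|$.

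Next I would prove $M^n(f)\to M(f)$ for the two test functions $f=1_{\{j\}}$ and $f=1_{\bN\cup\{\infty\}}$. For each of these $f\ge0$, so since $\nu^n_k\uparrow\nu_k$, monotone convergence gives $\sum_i\nu^n_i R^{\min}_\alpha f(i)\uparrow\sum_i\nu_i R^{\min}_\alpha f(i)$ and $\sum_i\nu^n_i(1-u_\alpha(i))\uparrow\sum_i\nu_i(1-u_\alpha(i))$; the latter limit is finite by \eqref{eq:B1}, and the former is finite because $M(f)$, being a summand of a resolvent bounded by $\|f\|_\infty/\alpha$, is finite. Together with $\beta_n\to\beta$ and the uniform-in-$n$ lower bound $\beta_n\alpha\,\mu(u_\alpha)>0$ on the denominators, this yields $M^n(f)\to M(f)$, hence $R^n_\alpha f\to R_\alpha f$ in $C(\bN\cup\{\infty\})$. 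Since the linear span of $\{1_{\{j\}}:j\in\bN\}\cup\{1_{\bN\cup\{\infty\}}\}$ is dense in $C(\bN\cup\{\infty\})$ and $\|R^n_\alpha\|\le1/\alpha$ uniformly in $n$, a routine $3\varepsilon$-argument extends the convergence $R^n_\alpha f\to R_\alpha f$ to all $f\in C(\bN\cup\{\infty\})$.

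Finally, each $(P^n_t)$ and $(P_t)$ is a strongly continuous contraction semigroup on $C(\bN\cup\{\infty\})$, and $(R_\alpha)$ is the resolvent of $(P_t)$; so the Trotter--Kato approximation theorem upgrades the resolvent convergence just obtained to $P^n_t f\to P_t f$ in $C(\bN\cup\{\infty\})$, uniformly for $t$ in bounded intervals, for every $f\in C(\bN\cup\{\infty\})$. Taking $f=1_{\{j\}}$ and recalling $p^n_{ij}(t)=P^n_t1_{\{j\}}(i)$, $p_{ij}(t)=P_t1_{\{j\}}(i)$ gives
\[
\sup_{t\in[0,T]}\ \sup_{i\in\bN}\ |p^n_{ij}(t)-p_{ij}(t)|\ \le\ \sup_{t\in[0,T]}\ \|P^n_t1_{\{j\}}-P_t1_{\{j\}}\|_{C(\bN\cup\{\infty\})}\ \longrightarrow\ 0,
\]
which is \eqref{eq:91}. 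The one genuinely delicate point in this plan is the uniform (rather than merely pointwise in $i$) resolvent convergence of the third paragraph: this is exactly where the standing assumption that $\infty$ is regular and the explicit representation \eqref{eq:B2} are indispensable, since they force the $n$-dependence of $R^n_\alpha f$ to factor through the single $i$-independent scalar $M^n(f)$, so that convergence of that scalar automatically promotes pointwise convergence to uniform convergence; the remaining steps are the mechanical Trotter--Kato machinery.
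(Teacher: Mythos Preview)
Your proposal is correct and follows essentially the same route as the paper's proof: both use the explicit Wang--Yang resolvent representation \eqref{eq:B2} to reduce uniform resolvent convergence to the convergence of a single scalar, and then invoke Trotter--Kato. The paper's proof is terser (it simply asserts the uniform resolvent convergence ``using \eqref{eq:B2} and Remark~\ref{RMB2}''), while you spell out the mechanism via $M^n(f)\to M(f)$ and the density argument.

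One small imprecision: your phrase ``uniform-in-$n$ lower bound $\beta_n\alpha\,\mu(u_\alpha)>0$ on the denominators'' is not quite right when $\beta=0$, since then $\beta_n\downarrow 0$. What you actually need (and what holds) is merely that the denominator $D_n:=\gamma+\sum_i\nu^n_i(1-u_\alpha(i))+\beta_n\alpha\,\mu(u_\alpha)$ converges to the strictly positive limit $D=\gamma+\sum_i\nu_i(1-u_\alpha(i))$ (positive because $|\nu|=\infty$ forces $\nu\neq 0$ and $1-u_\alpha(i)>0$ for every $i\in\bN$); together with convergence of the numerators this already gives $M^n(f)\to M(f)$, no uniform lower bound required. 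This is a phrasing issue rather than a genuine gap.
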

\begin{proof}
Note that both $p_{ij}(t)$ and $p_{ij}^n(t)$ are Feller processes on $\bN\cup\{\infty\}\cup\{\partial\}$.  Using \eqref{eq:B2} and Remark~\ref{RMB2},  one can easily obtain that for any $f\in C(\bN\cup\{\infty\})$, 
\[
R^n_{\alpha}f(i)\rightarrow R_\alpha f(i),\quad n\rightarrow\infty
\]
uniformly in $i\in \bN\cup\{\infty\}$,  where $(R^n_\alpha)$ is the resolvent of $(p^n_{ij}(t))$.  Then,  the conclusion follows from the Trotter-Kato theorem; see, e.g.,  \cite[IX,  Theorem~2.16]{K95}. 
\end{proof}

In 1958, Wang (see \cite{WY92}) constructed a sequence of Doob processes that converges to an arbitrary fixed $Q$-process in the same sense as \eqref{eq:91}, although not uniformly. This convergence implies the convergence of the finite-dimensional distributions of the corresponding $Q$-processes.  Regarding our strategy,  since both $(p^n_{ij}(t))$ and $(p_{ij}(t))$ in \eqref{eq:91} are Feller $Q$-processes,  we can achieve another stronger form of convergence. 

Let $X^n:=(X^n_t)_{t\geq 0}$ (resp.  $X=(X_t)_{t\geq 0}$) represent the Feller $Q$-process on $\bN\cup\{\infty\}\cup\{\partial\}$ with the transition matrix $(p_{ij}^n(t))$ (resp. $(p_{ij}(t))$).  
These processes can be realized on the Skorohod space $D_{\bN\cup\{\infty\}\cup\{\partial\}}([0,\infty))$, which consists of all c\`adl\`ag functions on $\bN\cup\{\infty\}\cup \{\partial\}$.  Note that $\bN\cup\{\infty\}\cup\{\partial\}$ is equipped with a certain metric that  conforms to the topology of $\bN\cup\{\infty\}\cup \{\partial\}$,  resulting in $D_{\bN\cup\{\infty\}\cup \{\partial\}}([0,\infty))$ becoming a complete separable metric space; see, e.g., \cite[\S3 Theorem~5.6]{EK05}.  Consequently, $X^n$ (resp.  $X$) can be identified with a specific probability measure $\mathbf{P}^n$ (resp.  $\mathbf{P}$) on $D_{\bN\cup\{\infty\}\cup\{\partial\}}([0,\infty))$.  

\begin{theorem}\label{THM92}
Adopt the same notations as in Lemma~\ref{LM91}. 
 Assume that $X^n_0$ converges to $X_0$ as $n\rightarrow \infty$ in distribution.  Then, $\mathbf{P}^n$ converges weakly to $\mathbf{P}$ as $n\rightarrow \infty$,  i.e.,  for any bounded and continuous function $F$ on $D_{\bN\cup\{\infty\}\cup \{\partial\}}([0,\infty))$,
\[
	\lim_{n\rightarrow \infty} \int_{D_{\bN\cup\{\infty\}}([0,\infty))} F(\omega)\mathbf{P}^n(d\omega)=\int_{D_{\bN\cup\{\infty\}}([0,\infty))} F(\omega)\mathbf{P}(d\omega).  
\]
\end{theorem}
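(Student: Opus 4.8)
## Proof Proposal for Theorem~\ref{THM92}

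The plan is to invoke a standard weak-convergence criterion for càdlàg Markov processes on a Polish state space, reducing the claim to two ingredients: convergence of finite-dimensional distributions (f.d.d.) and tightness of the sequence $(\mathbf{P}^n)$ in $D_{\bN\cup\{\infty\}\cup\{\partial\}}([0,\infty))$. For the f.d.d. convergence, I would first observe that Lemma~\ref{LM91} gives $p^n_{ij}(t)\to p_{ij}(t)$ uniformly on $[0,T]\times\bN$ for each fixed $j$; combined with the assumed convergence $X^n_0\Rightarrow X_0$ and the fact that all the processes live on the \emph{compact} space $\bN\cup\{\infty\}\cup\{\partial\}$, one gets $P^n_t f(i)\to P_tf(i)$ for every $f\in C(\bN\cup\{\infty\})$, and in fact uniformly in $i$ (the boundary point $\infty$ is handled by the uniform convergence in $i$ together with continuity of the limit, exactly as in the proof of Lemma~\ref{LM91} where the Trotter--Kato theorem was applied to the resolvents). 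Propagating this through the Markov property and the portmanteau theorem yields
\[
	\mathbf{E}^n\big[g_1(X^n_{t_1})\cdots g_m(X^n_{t_m})\big]\longrightarrow \mathbf{E}\big[g_1(X_{t_1})\cdots g_m(X_{t_m})\big]
\]
for all $0\le t_1<\cdots<t_m$ and $g_1,\dots,g_m\in C(\bN\cup\{\infty\}\cup\{\partial\})$, which is the desired f.d.d. convergence.

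The substantive part is tightness. Here I would use the Feller property established in Corollary~\ref{COR52}: each $X^n$ (and $X$) is a Feller process on the compact metric space $\bF=\bN\cup\{\infty\}\cup\{\partial\}$, with semigroup $(P^n_t)$ strongly continuous on $C(\bF)$. The natural route is the Aldous tightness criterion, or more directly the semigroup/generator criterion of Ethier--Kurtz: it suffices to exhibit a core-like family of functions $F$ for which $t\mapsto F(X^n_t)$ has uniformly controlled oscillations. Concretely, for $f\in C(\bF)$ the process $R^n_1 f(X^n_t) - \int_0^t (R^n_1 f - f)(X^n_s)\,ds$ is a martingale; since $\|R^n_1 f\|_\infty\le\|f\|_\infty$ and $R^n_1 f\to R_1 f$ uniformly (shown in the proof of Lemma~\ref{LM91}), the predictable parts are uniformly bounded, so by the Rebolledo/Aldous criterion the sequence $(R^n_1 f(X^n_\cdot))$ is tight in $D_{\bR}([0,\infty))$ for each $f$ in the dense set $\{R_1 f: f\in C(\bF)\}$; since $\bF$ is compact metric this collection of functions separates points, and by \cite[\S3, Theorem~9.1]{EK05} (tightness via a separating family) the laws $\mathbf{P}^n$ are tight in $D_{\bF}([0,\infty))$.

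Combining tightness with the f.d.d. convergence, every subsequential weak limit $\mathbf{P}'$ of $(\mathbf{P}^n)$ has the same finite-dimensional distributions as $\mathbf{P}$; since f.d.d.'s of a càdlàg process at a dense set of times (in fact at all times, by right-continuity) determine the law on $D_{\bF}([0,\infty))$, we get $\mathbf{P}'=\mathbf{P}$, hence $\mathbf{P}^n\Rightarrow\mathbf{P}$. I expect the main obstacle to be the tightness estimate at the boundary point $\infty$: one must ensure that the modulus-of-continuity control provided by the resolvent functions $R^n_1 f$ is genuinely uniform in $n$ near $\infty$, and this is where the uniform-in-$i$ convergence from Lemma~\ref{LM91} (which itself rests on the explicit resolvent formula \eqref{eq:B2} and Remark~\ref{RMB2}) is essential; without it the approximating generators could develop worse and worse behaviour at $\infty$ as the "small non-local jumps" accumulate. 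A minor additional point to check is that the metric on $\bF$ making it the Alexandroff-type compactification is such that $C(\bF)$ has a countable point-separating subset contained in the range of $R_1$, which is immediate from separability of $\bF$ and the density statements in Theorem~\ref{THM32}(2)--(3).
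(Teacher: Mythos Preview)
Your proposal is correct, but it takes a more hands-on route than the paper. The paper's proof is two lines: it observes that the Trotter--Kato argument already used in Lemma~\ref{LM91} actually yields strong semigroup convergence $P^n_t f\to P_t f$ in $C(\bN\cup\{\infty\}\cup\{\partial\})$ for every $f$, and then simply invokes \cite[\S4, Theorem~2.5]{EK05}, a packaged criterion stating that for Feller processes on a (locally) compact metric space, strong semigroup convergence together with convergence in law of the initial distributions implies weak convergence in the Skorohod space. Your argument unpacks exactly this theorem: you deduce f.d.d.\ convergence from the semigroup convergence, and you establish tightness via the resolvent martingales $R^n_1 f(X^n_\cdot)$ and the Aldous/Rebolledo criterion combined with \cite[\S3, Theorem~9.1]{EK05}. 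Both routes rest on the same analytic input (the uniform-in-$i$ resolvent convergence coming from \eqref{eq:B2}), so your concern about control near $\infty$ is already handled by that input in either approach. The advantage of your version is that it makes the mechanism explicit; the paper's version is much shorter because the cited Ethier--Kurtz theorem already bundles the f.d.d.\ and tightness steps.
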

\begin{proof}
Denote by $P^n_t$ and $P_t$ the transition semigroups of $X^n$ and $X$, respectively.  The argument in the proof of Lemma~\ref{LM91} shows that for any $f\in C(\bN\cup\{\infty\}\cup\{\partial\})$,  $P_t^nf$ converges to $P_tf$ in $C(\bN\cup\{\infty\}\cup \{\partial\})$ as $n\rightarrow \infty$.  (Note that $P^n_tf(\partial)=P_tf(\partial)=f(\partial)$.) Then, the result follows by applying \cite[\S4 Theorem~2.5]{EK05}. 
\end{proof}

\appendix

\section{Piecing out of Ikeda-Nagasawa-Watanabe}\label{APPA}

Let $S$ be a locally compact Hausdorff space with a countable base and  let $S_\Delta:=S\cup \{\Delta\}$ be the one-point compactification of $S$ (where $\Delta$ is attached as an isolated point if $S$ is compact).  
We are given a right continuous strong Markov process 
\[
	X=\left\{\Omega, \sF, \sF_t,  X_t , \theta_t,    (\mathbf{P}_x)_{x\in S_\Delta} \right\}
\]
on $S_\Delta$ with the lifetime $\zeta$,  where $\Delta$ is the trap point.  
Consider a probability measure $\pi$ on $S_\Delta$.  Define a kernel $\nu(\omega, dy)$ on $\Omega\times S_\Delta$ as follows:
\begin{equation}\label{eq:A1}
	\nu(\omega,  dy):=\pi(dy),\; \omega\neq \omega_\Delta,\quad \nu(\omega_\Delta,  dy):=\delta_\Delta(dy),
\end{equation}
where $\omega_\Delta\in \Omega$ such that $X_t(\omega_\Delta)=\Delta$ for any $t\geq 0$.  

The piecing out transformation proposed by Ikeda et al. in \cite{INW66} can be seen as, in some sense,  the inverse of killing.  Following \cite{INW66}, let $W:=\Omega \times S_\Delta$ with the $\sigma$-algebra $\mathcal{B}(W):=\sF\otimes \mathcal{B}(S_\Delta)$, and for any $\mathtt{w}=(\omega, y)\in W$, define
\begin{equation}\label{SQ2XTW}
\dot{X}_t(\mathtt{w}):=\left\lbrace
\begin{aligned}
	X_t(\omega),\quad t<\zeta(\omega), \\
	y,\quad\quad t\geq \zeta(\omega). 
\end{aligned}\right.
\end{equation}
For each $x\in S_\Delta$, we can introduce a probability measure $\mathbf{Q}_x(d\mathtt{w}):=\mathbf{P}_x(d\omega)\nu(\omega, dy)$ on $W$. Furthermore,  let $(\tilde{\Omega}, \tilde{\sF})$ be the product of infinitely many countable copies of $(W,\mathcal{B}(W))$.  There exists a unique probability measure $\tilde{\mathbf{P}}_x$ on $(\tilde{\Omega}, \tilde{\sF})$ such that for any $n\geq 1$, 
\begin{equation}\label{eq:A3}
\tilde{\mathbf{P}}_x[ d\mathtt{w}_1,\cdots,d\mathtt{w}_n]
 =\mathbf{Q}_x[d\mathtt{w}_1]\mathbf{Q}_{y_1}[d\mathtt{w}_2]\cdots\mathbf{Q}_{y_{n-1}}[d\mathtt{w}_n],
\end{equation}
where $\mathtt{w}_i=(\omega_i, y_i)$ for $1\leq i\leq n$.  We define a new trajectory for $\tilde{\mathtt{w}}=(\mathtt{w}_1,\cdots, \mathtt{w}_n,\cdots)\in \tilde{\Omega}$ as follows:
\[
\tilde{X}_t(\tilde{\mathtt w})= \begin{cases}
 \dot{X}_t(\mathtt{w}_1), &  \mbox{if}~0\leq t\leq \zeta(\omega_1), \\
 \cdots & \\
 \dot{X}_{t-(\zeta(\omega_1)+\ldots+\zeta(\omega_n))}(\mathtt{w}_{n+1}), & \mbox{if}~\sum\limits_{i=1}^n\zeta(\omega_i)<t\leq \sum\limits_{i=1}^{n+1}\zeta(\omega_i),\\
 \cdots & \\
 \Delta & \mbox{if}~t\geq\tilde{\zeta}(\tilde{\mathtt{w}}):= \sum\limits_{i=1}^{N(\tilde{\mathtt{W}})}\zeta(\omega_{i}),
 \end{cases} 
\]
where $N(\tilde{\mathtt{w}})=\inf\{i:\zeta(\omega_i)=0\}$ with $\inf\emptyset:=\infty$.  (Intuitively, $\tilde{X}$ returns to the state space immediately according to the distribution $\pi$ after blowing up.)
By further defining the shift operators $\tilde{\theta}_t$ and the filtration $\tilde{\sF}_t$ on $\tilde{\Omega}$,  the main result of \cite{INW66} states that
\begin{equation}\label{SQ2XOF}
	\tilde{X}=\left\{\tilde{\Omega},\tilde{\sF}, \tilde{\sF}_t, \tilde{X}_t, \tilde{\theta}_t,  (\tilde{\mathbf{P}}_x)_{x\in S_\Delta}\right\}
\end{equation}
is a right continuous strong Markov process on $S_\Delta$ with the lifetime $\tilde{\zeta}$.  The kernel $\nu$ is called the \emph{instantaneous distribution} of the piecing out transformation in \cite{INW66}. 

For convenience,  we denote $(X, \pi)$ as the \emph{piecing out} $\tilde{X}$ of $X$ with respect to the probability measure $\pi$, which determines the instantaneous distribution $\nu$.  

\section{Analytic construction of birth and death processes}\label{APPB}

 Historically, there has been a well-known analytic approach to characterize all birth and death processes by solving their resolvents.  This method was first introduced by Feller \cite{F59} and later completed by Yang in 1965 (see \cite[Chapter 7]{WY92}).

Adopt the notations presented in Section~\ref{SEC2} {and assume that $\infty$ is either regular or exit}. 
Let $(p_{ij}(t))_{i,j\in \bN}$ be a $Q$-process that is not the minimal one. 
Recall that $u_\alpha(i)=\mathbf{E}_i e^{-\alpha \eta}$ for $i\in \bN$ and denote by $(R^\text{min}_\alpha)_{\alpha>0}$ the resolvent of the minimal $Q$-process.  Set
\[
\Phi_{ij}(\alpha):=R^\text{min}_\alpha(i, \{j\}),\quad \alpha>0,  i,j\in \bN. 
\] 
Let $\nu$ be a positive measure on $\bN$ and let $\gamma, \beta\geq 0$ be two constants.  Set $|\nu|:=\sum_{k\geq 1}\nu_k$.  {When both 
\begin{equation}\label{eq:B1}
	\sum_{k\geq 0} \nu_k \left(\sum_{j=k}^\infty (c_{j+1}-c_j) \sum_{i=0}^j \mu_i \right)<\infty,\quad |\nu| +\beta\neq 0, 
\end{equation}
and
\begin{equation}\label{eq:B4}
\beta=0,\quad \text{if }\infty\text{ is exit}
\end{equation}
are satisfied},  define, for $\alpha>0$ and $i,j\in \bN$,
\begin{equation}\label{eq:B2}
	\Psi_{ij}(\alpha):=\Phi_{ij}(\alpha)+u_\alpha(i) \frac{\sum_{k\geq 0} \nu_k \Phi_{kj}(\alpha)+\beta\mu_j u_\alpha(j)}{\gamma+\sum_{k\geq 0}\nu_k(1-u_\alpha(k))+\beta\alpha\sum_{k\geq 0}\mu_k u_\alpha(k)}.  
\end{equation}
The matrix $(\Psi_{ij}(\alpha))_{i,j\in \bN}$ is called the \emph{$(Q, \nu, \gamma, \beta)$-resolvent matrix. } For a constant $M>0$,  the $(Q, M\nu,  M\gamma, M\beta)$-resolvent matrix is obviously the same as the $(Q, \nu, \gamma,\beta)$-resolvent matrix.  

The following theorem is attributed to \cite[\S7.6]{WY92}. 

\begin{theorem}\label{THMB1}
The transition matrix $(p_{ij}(t))_{i,j\in \bN}$ is a $Q$-process that is not the minimal one,  if and only if there exists a unique (up to a multiplicative positive constant) triple $(\nu, \gamma, \beta)\geq 0$ with \eqref{eq:B1} and \eqref{eq:B4} such that the resolvent of $(p_{ij}(t))_{i,j\in \bN}$ is given by
\[
	R_\alpha(i,\{j\})=\Psi_{ij}(\alpha),\quad \alpha>0,  i,j\in \bN,
\]
where $(\Psi_{ij}(\alpha))_{i,j\in \bN}$ is the $(Q, \nu, \gamma, \beta)$-resolvent matrix.  Furthermore,
\begin{itemize}
\item[(1)] $(p_{ij}(t))_{i,j\in \bN}$ is honest,  if and only if $\gamma=0$.  
\item[(2)] $(p_{ij}(t))_{i,j\in \bN}$ is the $(Q,\pi)$-Doob process with $\pi\neq \delta_\partial$,  if and only if $0<|\nu|<\infty,  \beta=0$ and 
\begin{equation}\label{eq:B3}
\pi(\{k\})=\frac{\nu_k}{\gamma+|\nu|},\; k\in \bN,\quad \pi(\{\partial\})=\frac{\gamma}{\gamma+|\nu|}. 
\end{equation}
\end{itemize}
\end{theorem}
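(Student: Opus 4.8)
The statement is \cite[\S7.6]{WY92} (resting on Feller's analytic programme), so my plan is to reconstruct that argument. For the ``only if'' direction I would start from the fact that the minimal $Q$-process is dominated by every $Q$-process, so $R_\alpha(i,\{j\})\ge\Phi_{ij}(\alpha)$ and the difference is non-negative. Taking Laplace transforms in the backward equation $\bP'(t)=Q\bP(t)$ --- which is a finite three-term recursion, so there is no convergence issue --- yields $(\alpha-Q)R_\alpha f=f$ for bounded $f\ge 0$, and the same holds for $R^\text{min}_\alpha$, so $h:=R_\alpha f-R^\text{min}_\alpha f$ solves $(\alpha-Q)h=0$. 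The recursion defining the operator $Q$ in \eqref{eq:72} shows that any such $h$ is determined by the single value $h(0)$; hence the solution space is one-dimensional and, since $u_\alpha$ is a non-zero solution by \cite[Theorem~7.1]{F59}, it is spanned by $u_\alpha$. This produces the decomposition $R_\alpha f(i)=R^\text{min}_\alpha f(i)+L_\alpha(f)\,u_\alpha(i)$, where $f\mapsto L_\alpha(f)=(R_\alpha f(0)-R^\text{min}_\alpha f(0))/u_\alpha(0)\ge 0$ is a positive linear functional that does not depend on $i$.

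The heart of the matter --- and the step I expect to be the main obstacle --- is to show that $L_\alpha$ must have the shape appearing in \eqref{eq:B2}. I would substitute the decomposition into the resolvent equation $R_\alpha-R_\beta=(\beta-\alpha)R_\alpha R_\beta$, use the resolvent equation for $R^\text{min}$ together with the identity $R^\text{min}_\alpha u_\beta=(u_\beta-u_\alpha)/(\alpha-\beta)$, and reduce everything to a single consistency relation tying $L_\alpha$ to $L_\beta$ for all $\alpha,\beta>0$. The content of the Feller--Yang analysis is that this relation forces $L_\alpha$ to be the ratio whose numerator is $\sum_k\nu_kR^\text{min}_\alpha f(k)+\beta\sum_k\mu_ku_\alpha(k)f(k)$ and whose denominator is $\gamma+\sum_k\nu_k(1-u_\alpha(k))+\beta\alpha\sum_k\mu_ku_\alpha(k)$, for an $\alpha$-independent positive measure $\nu$ on $\bN$ and constants $\gamma,\beta\ge 0$ --- evaluated at $f=1_{\{j\}}$ this is exactly \eqref{eq:B2}. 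Finiteness and positivity of this expression for one (hence every) $\alpha$ is precisely condition \eqref{eq:B1}, and \eqref{eq:B4} is forced because when $\infty$ is an exit one has $\mu(\bN)=\infty$, so $\beta>0$ would make the denominator infinite and collapse $R_\alpha$ onto $R^\text{min}_\alpha$, contradicting non-minimality. Uniqueness of $(\nu,\gamma,\beta)$ up to a common positive scalar is then clear, since the numerator and denominator are determined by $R_\alpha$ only up to such a scalar.

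For the ``if'' direction I would take any triple satisfying \eqref{eq:B1} and \eqref{eq:B4} and check directly that $(\Psi_{ij}(\alpha))_{i,j}$ is non-negative (condition \eqref{eq:B1} makes the correction term well defined and keeps its denominator strictly positive), satisfies the resolvent equation, is sub-Markovian in the sense $\alpha\sum_j\Psi_{ij}(\alpha)\le 1$, and obeys $\alpha\Psi_{ij}(\alpha)\to\delta_{ij}$ as $\alpha\to\infty$ with $\lim_{\alpha\to\infty}\alpha\bigl(\alpha\Psi_{ij}(\alpha)-\delta_{ij}\bigr)=q_{ij}$; these are the standard criteria for $(\Psi_{ij}(\alpha))$ to be the resolvent of a standard transition matrix whose density matrix is $Q$, and the process is not the minimal one since the correction term does not vanish (because $|\nu|+\beta\ne 0$).

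Finally, parts (1) and (2) I would read off. For (1), using $1-\alpha R^\text{min}_\alpha 1=u_\alpha$ and $\nu(R^\text{min}_\alpha 1)=\alpha^{-1}\nu(1-u_\alpha)$ gives $1-\alpha R_\alpha 1(i)=u_\alpha(i)\bigl(1-\alpha L_\alpha(1)\bigr)$ with $\alpha L_\alpha(1)=\bigl(\nu(1-u_\alpha)+\beta\alpha\mu(u_\alpha)\bigr)/\bigl(\gamma+\nu(1-u_\alpha)+\beta\alpha\mu(u_\alpha)\bigr)$, which equals $1$ for every $\alpha$ precisely when $\gamma=0$. For (2), with $\beta=0$ I would match \eqref{eq:B2} against the $(Q,\pi)$-Doob resolvent \eqref{eq:42}: since $u_\alpha(\partial)=0$, the denominator $1-\pi(u_\alpha)$ there equals $\pi(\{\partial\})+\sum_{k\ge 0}\pi_k(1-u_\alpha(k))$, so \eqref{eq:42} is exactly the $(Q,\nu,\gamma,0)$-resolvent matrix with $\nu=\pi|_{\bN}$ and $\gamma=\pi(\{\partial\})$, and conversely; hence the $(Q,\pi)$-Doob processes with $\pi\ne\delta_\partial$ are exactly those with $\beta=0$ and $0<|\nu|<\infty$ (the case $|\nu|=\beta=0$ being excluded by \eqref{eq:B1}, and $|\nu|=\infty$ yielding a Feller $Q$-process by Lemma~\ref{LM62} rather than a Doob process), with \eqref{eq:B3} being the normalisation that makes $\gamma+|\nu|=1$.
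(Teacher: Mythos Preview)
The paper does not give its own proof of Theorem~\ref{THMB1}: the result is stated in Appendix~\ref{APPB} and explicitly attributed to \cite[\S7.6]{WY92}, with no argument supplied. So there is nothing to compare your proposal against within the paper itself.

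That said, your outline is a faithful reconstruction of the Feller--Yang programme. The decomposition $R_\alpha f=R^\text{min}_\alpha f+L_\alpha(f)\,u_\alpha$ via the one-dimensionality of bounded solutions to $(\alpha-Q)h=0$ is exactly the starting point in \cite{F59} and \cite{WY92}; your identification of the resolvent-equation consistency as the crux is correct, and that is indeed where the bulk of Yang's work lies. Your treatment of parts~(1) and~(2) is also in line with how the paper uses them elsewhere (e.g.\ the matching against \eqref{eq:42} in the proof of Theorem~\ref{THM41}). One small comment: in your one-dimensionality step you should make explicit that $h\ge 0$ (from domination of the minimal process) is what forces the multiple of $u_\alpha$ to be non-negative, so that $L_\alpha(f)\ge 0$; the recursion alone gives a one-dimensional \emph{algebraic} solution space, but positivity is what pins down the sign.
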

\begin{remark}\label{RMB2}
The first inequality in \eqref{eq:B1} amounts to $\sum_{k,j\in \bN}\nu_k \Phi_{kj}(\alpha)<\infty$ for any $\alpha>0$; see \cite[\S7.10]{WY92}.  
\end{remark}

When $|\nu|+\beta=0$,  \eqref{eq:B2} simplifies to the minimal resolvent matrix $\Phi_{ij}(\alpha)$.  Hence,  we may say that the triple $(0, \gamma, 0)$ for any $\gamma\geq 0$ determines the resolvent matrix of the minimal $Q$-process.

\subsection*{Acknowledgements}

{
The author would like to express his gratitude to the referees for their valuable comments, which have greatly improved the quality of this paper.
The author would like to thank Professor Patrick J. Fitzsimmons.  It was through his suggestion that the author started to learn about the theory of Ray-Knight compactification. The author would also like to thank Professor Jiangang Ying for his guidance and many helpful suggestions.}

\bibliographystyle{abbrv}
\bibliography{RKbdSPA}

\end{document}